\newtheorem{theorem}{Theorem}[section]
\newtheorem{lemma}[theorem]{Lemma}
\newtheorem{corollary}[theorem]{Corollary}
\newtheorem{proposition}[theorem]{Proposition}
\numberwithin{equation}{section}
\def\proof {{\noindent \bf{Proof:\hspace{4pt}}}}
\def\endproof{\hfill$\square$\vspace{6pt}}
\begin{document}

\title{Existence theory of the nonlinear plate equations}
\author{{Carlos Banquet}\\{\small Departamento de Matem\'{a}ticas y Estad\'{\i}stica, Universidad de
C\'{o}rdoba}\\{\small A.A. 354, Monter\'{\i}a, Colombia.}\\{\small \texttt{E-mail:cbanquet@correo.unicordoba.edu.co}}\vspace{.5cm}\\{{Gilmar Garbugio}}\\{\small Universidade Federal Fluminense, UFF - Departamento de
Matem\'{a}tica,} \\{\small {Rua Desembargador Ellis, 783 - Aterrado, Volta Redonda - RJ, 27213-145,
Brazil.}}\\{\small \texttt{E-mail:gilmarg@id.uff.br}} \vspace{.5cm}\\{{\'Elder J. Villamizar-Roa}{\thanks{Corresponding author.}}}\\{\small Universidad Industrial de Santander, Escuela de Matem\'{a}ticas}\\{\small A.A. 678, Bucaramanga, Colombia.} \\{\small \texttt{E-mail:jvillami@uis.edu.co}}}
\date{}
\maketitle

\begin{abstract}
This paper is devoted to the theoretical analysis of the nonlinear plate equations in $\mathbb{R}^{n}\times (0,\infty),$
$n\geq1,$ with nonlinearity involving a type polynomial behavior. We  prove the existence and uniqueness of 
global mild solutions for small initial data in $L^{1}(\mathbb{R}^{n})\cap H^s(\mathbb{R}^{n})$-spaces.
We also prove the existence and uniqueness of local and global solutions in the framework of Bessel-potential spaces $H^s_p(\mathbb{R}^n)=(I-\Delta)^{s/2}L^p(\mathbb{R}^n).$  In order to derive the existence results we develop new time decay estimates of the solution of the corresponding linear problem. 
\bigskip

\textbf{AMS subject classification: }35A01, 35G20, 35L25, 74B20, 74K20.

\medskip\textbf{Keywords:} Nonlinear plate equations, time decay rates,
local and global solutions.

\end{abstract}

%%%%%%%%%%%%%%%%%%%%%%%%%%%%%%%%%%% ABSTRACT %%%%%%%%%%%%%%%%%%%%%%%%%%%%%%%%%%%%%%%%%

\pagestyle{myheadings} \markright{Nonlinear thermoelastic plate equation}
%%%%%%%%%%%%%%%%%%%%%%%%%%%%%%%%%%%% INTRODUCTION %%%%%%%%%%%%%%%%%%%%%%%%%%%%%%%%%%%%%%%

\section{Introduction}
In recent years, the family of Cauchy problems describing deformations of elastic plates under the consideration of several physical mechanisms, including different types of interactions between sources of energy dissipation, rotational inertia and effects of nonlinearities, have caught a lot of attention by many authors \cite{Lasiecka2013,Lasiecka2017,Lasiecka2019,Racke2017}. With the aim of contributing to the theoretical development of this type of models, in this paper we consider the following nonlinear plate equation describing the evolution of a vertical displacement of a plate under the action of rotational inertia effects 
\begin{eqnarray} \label{NorPlaEqu}
u_{tt}-\mu\Delta u_{tt}+D_u\Delta^2 u-\nu^2\Delta u_t=\delta(-\Delta)^{\theta} |u|^{\lambda}, & x\in\mathbb{R}^{n},\ \ t> 0,
%u(x,0)=u_{0}(x),\ \ u_t(x,0)=\Delta u_{1}(x), & x\in\mathbb{R}^{n}, 
\end{eqnarray}
where $0\leq \theta\leq 1,$ $D_u>0,$ and $\mu,\nu,\delta\geq 0.$ The simplest submodel of (\ref{NorPlaEqu}) is given by $u_{tt}+D_u\Delta^2 u=f,$ which comes from the momentum balance equation in the description of small deflection of thin plates, under the action of a distributed transverse load $f$ acting on the plate per unit area, where the coefficient $D_u$ represents the flexural rigidity of the plate. In particular, this undamped plate equation appears as a linear model describing the vibration of stiff objects where the potential energy involves curvature-like terms which lead to the bi-laplacian operator, see Denk and Schnaubelt \cite{Denk}. In the general model (\ref{NorPlaEqu}), the term $\Delta u_{tt}$ corresponds to the rotational inertia effects, and $-\nu^2\Delta u_t$ corresponds to a dissipative term which is added to incorporate the loss of energy. Model (\ref{NorPlaEqu}) can be derived from the thermoelastic plate equations in $\mathbb{R}^n,$ $n\geq 1,$ where the heat conduction is described by the Fourier law, that is,
\begin{equation} \label{LinEqup}
\left\{
\begin{array}[c]{lc}
&u_{tt}-\mu\Delta u_{tt}+\Delta^2 u+\nu\Delta \tau=\delta(-\Delta)^{\theta} |u|^{\lambda},\\
& \tau_t-\Delta \tau-\nu\Delta u_t=0.
\end{array}
\right.  
\end{equation}
Then, neglecting the variations in time for temperature we get $\Delta \tau=-\nu\Delta u_t,$ which replacing in (\ref{LinEqup})$_1$ gives (\ref{NorPlaEqu}). Equation (\ref{NorPlaEqu}) (in the case $\delta=0$) and related models including a complete dynamic between the displacement, the thermal moment and the heat flux  has attracted the attention of researchers, and many interesting results have been obtained (see \cite{Lasiecka2017, Lasiecka2019,Racke2016,Racke2017,Racke2020} and references therein).  Depending of the choice of the involved parameters, the resulting equation (system) represents several kinds of thermoelastic plates models. Moreover,  different qualitative behaviors occur depending of the domain where the equations are defined (bounded domains, exterior domains, the half space, the whole space $\mathbb{R}^n$, etc.). In particular, in Racke and Ueda \cite{Racke2017}, by considering in (\ref{NorPlaEqu}) $\delta=0$ and $\mu=0$ and $\Delta b(\Delta u)$ in place of $\Delta^2u,$ with $b$ a given smooth function which satisfies $b'(0)>0$ and $b(0)=0,$ the authors obtained the existence of global solution $u$ in the class $(u_t,\Delta u)\in C([0,\infty);H^{s+2}(\mathbb{R}^n))$ and $u_t\in C^1([0,\infty);H^{s}(\mathbb{R}^n)),$ $s\geq [n/2]+1,$ for initial data $u(x,0)=u_0(x),$ $u_t(x,0)=u_1(x)$ satisfying $\Vert (u_1,\Delta u_0)\Vert_{H^{s+2}}$ be small enough. For $\mu>0$ and $\delta=0,$ in \cite{Racke2017} the authors proved the existence of global solution $u$ of  (\ref{NorPlaEqu}) in the class $u_t\in C([0,\infty);H^{s+2}(\mathbb{R}^n))$ with $\Delta u\in C([0,\infty);H^{s+1}(\mathbb{R}^n))$ and initial data $(u_0,u_1)$ with $\Vert \Delta u_0\Vert_{H^{s+1}}+\Vert u_1\Vert_{H^{s+2}}$ being small enough. The results of  \cite{Racke2017} were obtained by combining a local existence result with a set of {\it a priori} estimates.\\

Considering in (\ref{NorPlaEqu}) the action of a frictional displacement $u_t$ in place of $-\nu^2\Delta u_t$ and a polynomial nonlinearity (with $\theta=0$), in D'Abbicco \cite{DAbbico1} the author proved the existence of global solutions $u$ in the class $u\in C([0,\infty);H^{2}(\mathbb{R}^n))\cap C^1([0,\infty);H^{1}(\mathbb{R}^n))$ with small initial data $(u_0,u_1)\in (L^1\cap H^2)\times (L^1\cap H^1).$ The author also derives optimal estimates for $u(\cdot,t)$ in the $L^r$-norm, $r\geq 2.$ Nonlinearities of kind $\vert u_t\vert^p$ have been considered in \cite{Charao,Sugitani}.\\ 

In (\ref{NorPlaEqu}), the presence of the inertial term $-\Delta u_{tt}$ generates additional difficulties to derive decay estimates for the solution of the linear problem, in comparison with the corresponding model without inertial term. The decay of solutions to the associated linearized problem is crucial to obtain existence results, if we use a fixed point argument. These decays are usually proved by using explicit representative formula of the solution of the corresponding linear equation, as happens in the Schr\"odinger equation, or through the solution of the linear equation in terms of the Fourier transform, when we do not have the explicit formula for the inverse of the equation in Fourier variables (\cite{VB}). In our case we do not have an explicit formula for the solution of the linear problem associated to (\ref{NorPlaEqu}). In fact, the solution of the linear problem associated to (\ref{NorPlaEqu}) is given by 
\[  e^{t\varphi(\xi)}\left[\frac{\varphi(\xi)\sin(t\phi(\xi))}{\phi(\xi)} +\cos(t\phi(\xi))\right] \widehat{u_0}(\xi) +e^{t\varphi(\xi)}\frac{\sin(t\phi(\xi))}{\phi(\xi)} \widehat{ u_1}(\xi), \]
for which the phase is $\varphi(\xi)=\frac{|\xi|^2 \sqrt{3+4|\xi|^2}}{2(1+|\xi|^2)}.$ We obtain time-decay rates for the solution of the corresponding linear system, which allow us to prove the existence of global solution for (\ref{NorPlaEqu}) and $\theta=1$ in energy spaces $H^s(\mathbb{R}^n),$ for $s>\frac{n-2}{2},$ under suitable small and regular initial data, compensating the lost of regularity created by the inertial term. Explicitly, for  $s>\frac{n-2}{2},$ we prove that the solution $\partial_t S(t) u_0(x)+S(t)\Delta u_1(x)$ of the linear problem related to  (\ref{NorPlaEqu}) satisfies
\[\Vert \partial_t S(t)g \Vert_{L^{\infty}(\mathbb{R}^n)}\leq C t^{-\frac{n}{2}} \Vert g\Vert_{L^{1}(\mathbb{R}^n)}+C e^{-\frac{t}{4}}\Vert g\Vert_{H^{s+1}(\mathbb{R}^n)},\]
\[\Vert S(t) \Delta g \Vert_{L^{\infty}(\mathbb{R}^n)}\leq C t^{-\frac{n}{2}} \Vert g\Vert_{L^{1}(\mathbb{R}^n)}+C e^{-\frac{t}{4}}\Vert g\Vert_{H^{s+2}(\mathbb{R}^n)},\]
\[\Vert \partial^{k+1}_t S(t)g \Vert_{H^{s-k}(\mathbb{R}^n)}\leq C \Vert g\Vert_{H^{s}(\mathbb{R}^n)},\]
\[\Vert \partial^{k}_t S(t)\Delta g \Vert_{H^{s-k}(\mathbb{R}^n)}\leq C \Vert g\Vert_{H^{s+1}(\mathbb{R}^n)},\]
 for all $g\in \mathscr{S}(\mathbb{R}^n),$ $t> 0$, $s\in \mathbb{R}$ and $k=0,1$. 
%In addition, to deal with the nonlinearity, defining the operator $\Lambda(t)=S(t)(I-\Delta)^{-1}(-\Delta),$ we prove that
 %\[\Vert \partial_t \Lambda (t)g \Vert_{L^{\infty}(\mathbb{R}^n)}\leq C [ (1+t)^{-\frac{3n}{8n+1}} \Vert g\Vert_{L^{1}(\mathbb{R}^n)}+e^{-\frac{t}{4}}\Vert g\Vert_{H^{s+1}(\mathbb{R}^n)}],\]
%for all $g\in \mathscr{S}(\mathbb{R}^n)$ and $t> 0.$ 
On the other hand, in the framework of Bessel-potential spaces $H^s_p(\mathbb{R}^n)=(I-\Delta)^{s/2}L^p(\mathbb{R}^n),$ we also derive some estimates for the solutions of the corresponding linear system which allow us to obtain the existence and uniqueness of local and global solutions. Explicitly,
for $\sigma<1-n$, $2\leq p \leq  \infty,$ we obtain that the solution of the linear problem satisfies
\begin{eqnarray*}
\Vert \partial^k_t S(t)g \Vert_{H_p^{\sigma-k}(\mathbb{R}^n)}\leq C t^{-\frac n 2(1-\frac 2p)}\Vert g \Vert_{L^{p'}(\mathbb{R}^n)},
\end{eqnarray*}
for all $g\in \mathscr{S}(\mathbb{R}^n),$ $k=1,2$ and $t> 0.$  Using previous estimates we are able to study the nonlinear problem. Thus, the novelty of this work is summarized in the following aspects: First, we obtain of time-decay estimates of the solution of the linear problem in energy spaces $H^s$ and Bessel-potential $H^s_p$ spaces, as well as a set of estimates corresponding to the operator $\Lambda_{\theta}(t)=S(t)(I-\Delta)^{-1}(-\Delta)^{\theta}$ in $H^s$ and $H^s_p$ spaces. Second, we  prove existence and uniqueness of global solutions in the class $u\in C([0, \infty), H^{s}(\mathbb{R}^n)) \cap C^1([0, \infty), H^{s-1}(\mathbb{R}^n))$, for $s>\frac{n-2}{2},$ for small initial data $(u_0,u_1)$ in $L^1\cap H^s\times L^1\cap H^{s+2}$, which is weaker that the previous initial classes (cf. \cite{Racke2017}). We also prove the existence and uniqueness of local and global solutions in the the framework of Bessel-potential spaces $H^s_p(\mathbb{R}^n)$ (see Theorems \ref{teo_global_2b} and \ref{teo_local_1} below).\\

This paper is organized as follows. In Section 2, we set the main results. In Section 3, we derive time decay estimates of the linear solution in $H^s$ and $H^s_p$-spaces, as well as a set of estimates corresponding to the operator $\Lambda_{\theta}(t)=S(t)(I-\Delta)^{-1}(-\Delta)^{\theta}$. In Section 4, we prove the existence and uniqueness of global in time solutions in the framework of $H^s$ and $H^s_p$-spaces, and finally, in Section 5, we prove the existence and uniqueness of local in time solutions.
%%%%%%%%%%%%%%%%%%%%%%%%%%%%% LINEAR PROBLEM %%%%%%%%%%%%%%%%%%%%%%%%%%%%%%%%%%%
\section{Main results}
Before establishing the main results, we solve the corresponding linear problem associated to (\ref{NorPlaEqu}) which is given by
\begin{equation} \label{LinEqu}
\left\{
\begin{array}[c]{lc}
u_{tt}-\Delta u_{tt}+\Delta^2 u-\Delta u_t=0, & x\in\mathbb{R}^{n},\ \ t> 0,\\
u(x,0)=u_{0}(x),\ \ \ u_t(x,0)=\Delta u_{1}(x), & x\in\mathbb{R}^{n}.
\end{array}
\right.  
\end{equation}
Using the Fourier transform we obtain the second order differential equation
\[
(1+|\xi|^2)\widehat{u}_{tt}+|\xi|^2 \widehat{u}_t+|\xi|^4 \widehat{u}=0. 
\]
The characteristic roots of the full symbol 
\[
(1+|\xi|^2)r^2+|\xi|^2 r+|\xi|^4=0, 
\]
are given by 
\begin{align*}
r_0=r_0(\xi)=-\frac{|\xi|^2}{2(1+|\xi|^2)}+\frac{|\xi|^2 \sqrt{3+4|\xi|^2}}{2(1+|\xi|^2)}i=\varphi(\xi)+\phi(\xi)i,\\
r_1=r_1(\xi)=-\frac{|\xi|^2}{2(1+|\xi|^2)}-\frac{|\xi|^2 \sqrt{3+4|\xi|^2}}{2(1+|\xi|^2)}i=\varphi(\xi)-\phi(\xi)i.
\end{align*}
After applying the Fourier transform, we can write
\begin{align*}
\widehat{u}(\xi, t)&=\frac{r_0e^{r_1 t}-r_1e^{r_0 t}}{r_0-r_1}\widehat{u_0}(\xi) +\frac{e^{r_0t}-e^{r_1 t}}{r_0-r_1}\widehat{u_1}(\xi)\\
 & =e^{t\varphi(\xi)}\left[\frac{\varphi(\xi)\sin(t\phi(\xi))}{\phi(\xi)} +\cos(t\phi(\xi))\right] \widehat{u_0}(\xi) +e^{t\varphi(\xi)}\frac{\sin(t\phi(\xi))}{\phi(\xi)} \widehat{\Delta u_1}(\xi).
\end{align*}
Then, the global solution of the linear problem is given by
\begin{align*}
u(x,t)&=\partial_t S(t) u_0(x)+S(t)\Delta u_1(x).
\end{align*} 
Also, from the Duhamel principle, the solution of  (\ref{NorPlaEqu}) with initial data $u(x,0)=u_{0}(x), u_t(x,0)=\Delta u_{1}(x),$ is given by 
\begin{align}
u(x,t)&=\partial_t S(t) u_0(x)+S(t)\Delta u_1(x) -\int_0^t S(t-\tau)( I-\Delta)^{-1} (-\Delta)^{\theta}|u(x,\tau)|^{\lambda} d\tau,\label{e3}
\end{align} 
where 
\[
S(t)v(x)=\int_{\mathbb{R}^n}e^{-\frac{|\xi|^2 t}{2(1+|\xi|^2)}}\tfrac{2(1+|\xi|^2)}{|\xi|^2\sqrt{3+4|\xi|^2}} \sin\left(\tfrac{|\xi|^2 \sqrt{3+4|\xi|^2}t} {2(1+|\xi|^2)}\right)\widehat{v}(\xi)e^{ix\cdot\xi} d\xi,
\]
\[
\partial_t S(t)v(x)=\int_{\mathbb{R}^n} e^{-\frac{|\xi|^2 t}{2(1+|\xi|^2)}}\left[ \cos\left(\tfrac{|\xi|^2 \sqrt{3+4|\xi|^2} t }{2(1+|\xi|^2)} \right)- \tfrac{1} {\sqrt{3+4|\xi|^2}}\sin\left(\tfrac{|\xi|^2 \sqrt{3+4|\xi|^2} t}{ 2(1+|\xi|^2)}\right) \right]  \widehat{v}(\xi)e^{ix\cdot\xi}d\xi.
\] 
Henceforth we denote $\Lambda_{\theta}(t)=S(t)(I-\Delta)^{-1}(-\Delta)^{\theta}.$ Now we are in position to establish the main results of this paper. 
%From now on, we consider the function $\beta_n:(-\infty, 1-n)\longrightarrow [-\frac{3n}{8n+1},0]$ defined by:
%\begin{equation}\label{Beta} 
%\beta_n(\sigma)=\left\{
%\begin{array}{cc}
 %\frac{\sigma+n-1}{3-2\sigma}, & \mbox{if} \ \ 1-4n \leq \sigma <1-n,\vspace{0.2cm}\\
%-\frac{3n}{3-2\sigma}, & \mbox{if} \ \ -\infty < \sigma \leq 1-4n.
%\end{array}
%\right. 
%\end{equation}
%\begin{theorem}(Global-in-time solutions in $H^s_p$-spaces)\label{teo_global_1}
%Let $\theta=1,$ $2\leq \lambda,$ $2\leq p \leq q\leq  \infty,$ and consider $\sigma<0,s>0,$ such that $1<-\lambda\beta_n(\sigma),$ $n(\frac{1}{p}-\frac{1}{q})\leq \sigma<1-n$ and $(\frac{\lambda}{q}+\frac{1}{p}-1)\frac{n}{\lambda-1}+\sigma+1<s<\min\{\frac{n}{q},\lambda-1\}.$ There exists $\delta>0$ such that if $$\sup_{0<t<\infty}t^{\alpha_1+\beta_n(\sigma)}(\Vert u_0\Vert_{H^{s-\sigma+1}_{p'}}+\Vert u_1\Vert_{H^{s-\sigma+2}_{p'}})\leq \delta/2,$$ then the initial value problem (\ref{NorPlaEqu}) has unique global solution $u\in C([0, \infty), H^{s}_p(\mathbb{R}^n)) \cap C^1([0, \infty), H^{s-1}_p(\mathbb{R}^n))$ satisfying
%$$\sup_{0<t<\infty}t^{\alpha_1} (\Vert u(t)\Vert_{H^s_p}+\Vert u_t(t)\Vert_{H^{s-1}_p})\leq \delta,\ \alpha_1=\frac{1+\beta_n(\sigma)}{\lambda-1}.$$
%\end{theorem}
\begin{theorem}(Global-in-time solutions in $H^s$)\label{teo_global_2}
Let  $\lambda\geq 3,$ $\theta=1,$  and consider $s>\frac{n-2}{2}$ with $n(\lambda-2)>2.$There exists $\delta>0$ such that if 
$$C(\Vert u_0\Vert_{L^{1}}+\Vert u_0\Vert_{H^{s+1}}+\Vert u_1\Vert_{L^{1}}+\Vert u_1\Vert_{H^{s+2}})\leq \delta/2,$$
for some $C>0,$ then the initial value problem (\ref{NorPlaEqu}) has a unique global solution $u\in C([0, \infty), H^{s}(\mathbb{R}^n)) \cap C^1([0, \infty), H^{s-1}(\mathbb{R}^n))$ satisfying
$$\sup_{0<t<\infty}((1+t)^{\frac{n}{2}} \Vert u(t)\Vert_{L^\infty}+\Vert u(t)\Vert_{H^s}+\Vert u_t(t)\Vert_{H^{s-1}})\leq \delta.$$\end{theorem}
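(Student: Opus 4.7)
The strategy is to apply Banach's fixed-point theorem to the integral equation (\ref{e3}) (with $\theta=1$) in the complete metric space
$$B_\delta=\{u\in X:\|u\|_X\leq \delta\},\quad X=C([0,\infty);H^s(\mathbb{R}^n))\cap C^1([0,\infty);H^{s-1}(\mathbb{R}^n)),$$
endowed with the norm
$$\|u\|_X=\sup_{t>0}\Bigl((1+t)^{n/2}\|u(t)\|_{L^\infty}+\|u(t)\|_{H^s}+\|u_t(t)\|_{H^{s-1}}\Bigr).$$
The fixed-point map is $\Phi(u)(t)=\partial_t S(t)u_0+S(t)\Delta u_1-\delta\int_0^t \Lambda_1(t-\tau)|u(\tau)|^\lambda\,d\tau$, and the goal is to verify that $\Phi:B_\delta\to B_\delta$ is a strict contraction.

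The linear piece $\partial_t S(t)u_0+S(t)\Delta u_1$ is handled directly by the four decay estimates announced in the introduction. The $H^s$- and $H^{s-1}$-components of $\|\cdot\|_X$ follow from the energy bounds $\|\partial_t^{k+1}S(t)g\|_{H^{s-k}}\leq C\|g\|_{H^s}$ and $\|\partial_t^k S(t)\Delta g\|_{H^{s-k}}\leq C\|g\|_{H^{s+1}}$, taken with $k=0,1$. For the $L^\infty$-piece I split the time axis into $t\geq 1$ and $0<t<1$: on $t\geq 1$ the stated dispersive bounds give the desired $t^{-n/2}$ decay; on $0<t<1$ I use Sobolev's embedding $H^{s+1}\hookrightarrow L^\infty$, legitimate because $s>(n-2)/2$, together with the same energy estimates applied with $s$ replaced by $s+1$, which is the origin of the $H^{s+2}$-requirement on $u_1$. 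These combine to yield
$$\|\partial_t S(t)u_0+S(t)\Delta u_1\|_X\leq C\bigl(\|u_0\|_{L^1}+\|u_0\|_{H^{s+1}}+\|u_1\|_{L^1}+\|u_1\|_{H^{s+2}}\bigr)\leq \delta/2.$$

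For the Duhamel term I use the corresponding estimates for $\Lambda_1(t)=S(t)(I-\Delta)^{-1}(-\Delta)$ developed in Section~3, which are of the same type as those for $S(t)\Delta$ since $(I-\Delta)^{-1}(-\Delta)$ has the bounded Fourier symbol $|\xi|^2/(1+|\xi|^2)$. The nonlinearity is controlled via the interpolation $\||u(\tau)|^\lambda\|_{L^1}\leq \|u(\tau)\|_{L^\infty}^{\lambda-2}\|u(\tau)\|_{L^2}^2\leq C\delta^\lambda(1+\tau)^{-n(\lambda-2)/2}$ together with the Moser-type composition bound $\||u(\tau)|^\lambda\|_{H^s}\leq C\|u(\tau)\|_{L^\infty}^{\lambda-1}\|u(\tau)\|_{H^s}\leq C\delta^\lambda(1+\tau)^{-n(\lambda-1)/2}$ (fractional chain rule, available since $\lambda\geq 3$). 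Splitting $\int_0^t=\int_0^{t/2}+\int_{t/2}^t$, the far-past integral produces $\int_0^{t/2}(t-\tau)^{-n/2}(1+\tau)^{-n(\lambda-2)/2}\,d\tau\lesssim(1+t)^{-n/2}$, where the hypothesis $n(\lambda-2)>2$ is exactly the integrability at infinity needed for this bound; the near-past integral is handled via the exponential factor $e^{-(t-\tau)/4}$ against the $H^s$-norm of $|u|^\lambda$. The resulting self-mapping inequality $\|\Phi(u)\|_X\leq \delta/2+C\delta^\lambda\leq \delta$ holds for $\delta$ small, and the same computations, with the mean-value inequality $\bigl||u|^\lambda-|v|^\lambda\bigr|\leq\lambda\bigl(|u|^{\lambda-1}+|v|^{\lambda-1}\bigr)|u-v|$, give the Lipschitz bound $\|\Phi(u)-\Phi(v)\|_X\leq C\delta^{\lambda-1}\|u-v\|_X$, which is a strict contraction for $\delta$ small. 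Banach's theorem produces the unique fixed point; the class $u\in C([0,\infty);H^s)\cap C^1([0,\infty);H^{s-1})$ follows from the strong continuity in $t$ of the operators $S(t),\partial_t S(t),\Lambda_1(t)$ on the relevant Sobolev spaces and dominated convergence in the Duhamel integral. The main obstacle is the $L^\infty$-decay of the Duhamel term: the singular kernel $(t-\tau)^{-n/2}$ forbids the dispersive bound on the near-past $[t/2,t]$, while the energy bound alone cannot produce the $(1+t)^{-n/2}$ rate on the far-past $[0,t/2]$, so the two regimes have to be glued together with precisely the right weights, and it is the hypothesis $n(\lambda-2)>2$ that makes this gluing possible.
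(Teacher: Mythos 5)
Your overall architecture matches the paper's: a contraction on the ball $\{\Vert u\Vert\le\delta\}$ in the weighted norm $\sup_t\bigl((1+t)^{n/2}\Vert u\Vert_{L^\infty}+\Vert u\Vert_{H^s}+\Vert u_t\Vert_{H^{s-1}}\bigr)$, linear part controlled by the four stated estimates, nonlinearity by the Moser/Zhu-type bounds $\Vert |u|^\lambda\Vert_{L^1}\lesssim\Vert u\Vert_{L^\infty}^{\lambda-2}\Vert u\Vert_{L^2}^2$ and $\Vert |u|^\lambda\Vert_{H^s}\lesssim\Vert u\Vert_{L^\infty}^{\lambda-1}\Vert u\Vert_{H^s}$, with $n(\lambda-2)>2$ supplying integrability in time. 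The only structural difference is cosmetic: you split the Duhamel integral at $t/2$, while the paper instead proves a non-singular convolution inequality (its Lemma \ref{lem_int}) for kernels $(1+t-\tau)^{-a}(1+\tau)^{-b}$.

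There is, however, one genuine gap, and it sits exactly at the point you flag as ``the main obstacle.'' On the near-past interval $[t/2,t]$ you propose to bound $\Vert\Lambda_1(t-\tau)|u(\tau)|^\lambda\Vert_{L^\infty}$ using only ``the exponential factor $e^{-(t-\tau)/4}$ against the $H^s$-norm.'' But the dispersive estimate you are invoking has the form $\Vert\Lambda_1(\sigma)g\Vert_{L^\infty}\le C\sigma^{-n/2}\Vert g\Vert_{L^1}+Ce^{-\sigma/4}\Vert g\Vert_{H^s}$: the two terms come from the low- and high-frequency regions respectively, and you cannot discard the first one at will. On $[t/2,t]$ the factor $(t-\tau)^{-n/2}$ is non-integrable for $n\ge 2$, and the low-frequency piece is precisely where it originates, so the exponential-times-$H^s$ term does not cover it. What is needed (and what the paper supplies in Lemma \ref{HsL1(1+t)<=1}) is a second, non-singular low-frequency estimate valid for small $\sigma$: since the multiplier of $\Lambda_\theta(\sigma)$ contains $\sin(\sigma\phi(\xi))/\phi(\xi)=O(\sigma)$, one has $\Vert\Lambda_1(\sigma)g\Vert_{L^\infty}\lesssim\sigma\Vert g\Vert_{L^1}+e^{-\sigma/4}\Vert g\Vert_{H^s}$ for $\sigma\le 1$, and combining the two regimes yields the uniform bound $C(1+\sigma)^{-n/2}\bigl(\Vert g\Vert_{L^1}+\Vert g\Vert_{H^s}\bigr)$ with no singularity at $\sigma=0$. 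With that replacement your gluing of the two time regimes (or, equivalently, the paper's Lemma \ref{lem_int}) goes through and the rest of your argument is sound; without it, the $L^\infty$ component of the self-map estimate does not close for $n\ge 2$.
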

Let us also define the initial data space $\mathcal{I}_0$ as the set of pairs $[u_0,u_1]\in [\mathcal{S}'(\mathbb{R}^n)]^2$ such that the norm
\begin{eqnarray*}
\Vert [u_0,u_1]\Vert_{\mathcal{I}_0}&:=&\sup_{0<t<\infty} t^\alpha(\Vert \partial_t S(t) u_0\Vert_{H^s_p}+\Vert S(t)\Delta u_1\Vert_{H^s_p})\\
&&+\sup_{0<t<\infty}t^\beta(\Vert \partial^2_{t} S(t) u_0\Vert_{H^{s-1}_p}+\Vert \partial_tS(t)\Delta u_1\Vert_{H^{s-1}_p})<\infty,
\end{eqnarray*}
 with $\alpha=\frac{1}{\lambda-1} [ 2-\theta-\frac n2(1-\frac 2p) ]$ and $\beta=\alpha+1-\theta,$ $\lambda \geq 2,$ $\theta \in (\frac{2-n}{2},1]$ if $n=1,2$ and $\theta\in [0,1]$ if $n\geq 3.$
 We also consider the norm,
\begin{eqnarray*}
\Vert u\Vert_{\mathcal{X}^{s,p}_{\alpha, \beta}}&:=&\sup_{0<t<\infty}( t^{\alpha} \Vert u(t)\Vert_{H^s_p}+ t^{\beta}\Vert u_t(t)\Vert_{H_p^{s-1}}).
\end{eqnarray*}

\begin{theorem}(Global-in-time solutions in $H^s_p$)\label{teo_global_2b}
Let $\lambda \geq 2,$ $\theta \in (\frac{2-n}{2},1]$ if $n=1,2$ and $\theta\in [0,1]$ if $n\geq 3,$ and $\frac{1}{\lambda}>\alpha>0.$ Assume $2\leq p \leq q\leq  \infty,$ $\frac n2(1-\frac 2p)<1$ and consider $\sigma,s$ such that $s>\sigma,$ $n(\frac{1}{p}-\frac{1}{q})\leq \sigma<3-n-2\theta$ and $(\frac{\lambda}{q}+\frac{1}{p}-1)\frac{n}{\lambda-1}+\sigma\leq s<\min\{\frac{n}{q},\lambda-1\}+\sigma.$ There exists $\delta>0$ such that if 
$$\Vert [u_0,u_1]\Vert_{\mathcal{I}_0}\leq \delta/2,$$
then the initial value problem (\ref{NorPlaEqu}) has a unique global solution $u\in C([0,\infty):H^s_p(\mathbb{R}^n))\cap C^1([0,\infty);H^{s-1}_p(\mathbb{R}^n))$ satisfying $\Vert u\Vert_{\mathcal{X}^{s,p}_{\alpha,\beta}}\leq {\delta}.$
\end{theorem}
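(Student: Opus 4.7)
The plan is to apply the Banach fixed-point theorem to the Duhamel operator
\[
\Phi(u)(t):=\partial_t S(t)u_0+S(t)\Delta u_1-\int_0^t \Lambda_\theta(t-\tau)|u(\tau)|^\lambda\,d\tau,
\]
regarded as a self-map of the closed ball $B_\delta:=\{u\in \mathcal{X}^{s,p}_{\alpha,\beta}:\ \Vert u\Vert_{\mathcal{X}^{s,p}_{\alpha,\beta}}\leq \delta\}$, which is complete under the distance inherited from $\mathcal{X}^{s,p}_{\alpha,\beta}$. By the very definition of $\mathcal{I}_0$, the linear contribution $\partial_t S(t)u_0+S(t)\Delta u_1$ is bounded in $\mathcal{X}^{s,p}_{\alpha,\beta}$ by $\Vert[u_0,u_1]\Vert_{\mathcal{I}_0}\leq \delta/2$, so only the nonlinear integral $N(u)(t):=\int_0^t\Lambda_\theta(t-\tau)|u(\tau)|^\lambda\,d\tau$ requires serious work.

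The strategy for $N(u)$ is to couple the $H^s_p$-decay estimates for $\Lambda_\theta(t)$ and $\partial_t\Lambda_\theta(t)$ developed in Section~3 with a Sobolev-embedding bound on $|u|^\lambda$. The decay of $\Lambda_\theta(t)$ produces a factor $(t-\tau)^{-\gamma}$ with $\gamma=\tfrac{n}{2}(1-\tfrac{2}{p})+\theta-1$, while the Sobolev embedding $H^s_p\hookrightarrow L^{\lambda r'}$, where $r'$ is the exponent dual to the one appearing in the linear bound, gives
\[
\Vert |u(\tau)|^\lambda\Vert_{L^{r'}}\leq C\,\Vert u(\tau)\Vert_{H^s_p}^\lambda\leq C\,\tau^{-\alpha\lambda}\,\Vert u\Vert_{\mathcal{X}^{s,p}_{\alpha,\beta}}^\lambda.
\]
The hypotheses $n(\tfrac{1}{p}-\tfrac{1}{q})\leq \sigma<3-n-2\theta$ and $(\tfrac{\lambda}{q}+\tfrac{1}{p}-1)\tfrac{n}{\lambda-1}+\sigma\leq s<\min\{\tfrac{n}{q},\lambda-1\}+\sigma$ are precisely those that (i) make the Sobolev embedding available for some admissible $r'$ and (ii) enforce the scaling identity $1-\gamma=\alpha(\lambda-1)$, equivalent to $1+\alpha-\gamma-\alpha\lambda=0$. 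Combining these ingredients and evaluating the resulting beta-type integral yields
\[
t^\alpha\Vert N(u)(t)\Vert_{H^s_p}\leq C\,t^{\,1+\alpha-\gamma-\alpha\lambda}\,B(1-\gamma,1-\alpha\lambda)\,\Vert u\Vert_{\mathcal{X}^{s,p}_{\alpha,\beta}}^\lambda=C\,\Vert u\Vert_{\mathcal{X}^{s,p}_{\alpha,\beta}}^\lambda,
\]
the endpoint integrability being guaranteed by $\alpha\lambda<1$ (since $\alpha<1/\lambda$) and by $\gamma<1$ (which follows from $\tfrac{n}{2}(1-\tfrac{2}{p})<1$ and $\theta\leq 1$). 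The bound for $t^\beta\Vert \partial_t N(u)(t)\Vert_{H^{s-1}_p}$ is obtained in the same way using the decay estimate for $\partial_t\Lambda_\theta$ in $H^{s-1}_p$; the extra factor $t^{-(1-\theta)}$ it carries is exactly what makes the choice $\beta=\alpha+1-\theta$ consistent.

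The contraction estimate has the same structure: the pointwise inequality $\bigl||u|^\lambda-|v|^\lambda\bigr|\leq C(|u|^{\lambda-1}+|v|^{\lambda-1})|u-v|$, Hölder's inequality in $L^{r'}$ and the same Sobolev embedding give
\[
\Vert N(u)-N(v)\Vert_{\mathcal{X}^{s,p}_{\alpha,\beta}}\leq C\bigl(\Vert u\Vert_{\mathcal{X}^{s,p}_{\alpha,\beta}}+\Vert v\Vert_{\mathcal{X}^{s,p}_{\alpha,\beta}}\bigr)^{\lambda-1}\Vert u-v\Vert_{\mathcal{X}^{s,p}_{\alpha,\beta}}.
\]
Consequently $\Phi$ maps $B_\delta$ into itself and is a contraction there provided $C\delta^{\lambda-1}<1/2$, which fixes the admissible size of $\delta$; its unique fixed point is the desired global solution. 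Time continuity with values in $H^s_p$ and $H^{s-1}_p$ is then standard, using the strong continuity of the linear propagator on Schwartz data, density, and dominated convergence for the Duhamel integral. The main technical obstacle is the simultaneous verification that all the exponent constraints can be met by an admissible choice of $(p,q,r,s,\sigma)$, and in particular that the window for $s$ is non-empty under the standing hypotheses; this amounts to a bookkeeping check showing that the gain in decay produced by the smoothing factor $(I-\Delta)^{-1}(-\Delta)^\theta$ in $\Lambda_\theta$ balances the loss of regularity caused by raising $u$ to the power $\lambda$, so that the resulting temporal scaling matches the prescribed weights $(\alpha,\beta)$.
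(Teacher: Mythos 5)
Your overall architecture — contraction mapping on the ball $\{\Vert u\Vert_{\mathcal{X}^{s,p}_{\alpha,\beta}}\leq\delta\}$, the linear part absorbed directly by the definition of $\Vert[u_0,u_1]\Vert_{\mathcal{I}_0}$, and the Beta-integral bookkeeping with the scaling identity $\alpha(\lambda-1)=2-\theta-\frac n2(1-\frac 2p)$ and $\beta=\alpha+1-\theta$ — coincides with the paper's proof, and your temporal exponent computations are correct. The gap is in how you measure the nonlinearity. You estimate $\Vert\,|u|^{\lambda}\Vert_{L^{r'}}$ via a Sobolev embedding and, for the contraction, the pointwise inequality $\bigl||u|^{\lambda}-|v|^{\lambda}\bigr|\lesssim(|u|^{\lambda-1}+|v|^{\lambda-1})|u-v|$ plus H\"older. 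But the linear estimate (Lemma \ref{LamThe<=1}) only gives $\Vert\Lambda_{\theta}(t)g\Vert_{H^{\sigma}_p}\lesssim t^{1-\theta-\frac n2(1-\frac 2p)}\Vert g\Vert_{L^{p'}}$ with $\sigma<3-n-2\theta$; the propagator has no further high-frequency smoothing (for $|\xi|\geq1$ the symbol only decays like $e^{-t/4}$ without gaining derivatives). So feeding in $|u|^{\lambda}\in L^{p'}$ controls only the $H^{\sigma}_p$ norm of the Duhamel term, whereas the fixed point must close in $H^{s}_p$ with $s>\sigma$. Equivalently, you must bound $\Vert\,|u|^{\lambda}-|v|^{\lambda}\Vert_{H^{s-\sigma}_{p'}}$ in a space of \emph{positive} fractional regularity $s-\sigma>0$, and there the pointwise inequality plus H\"older is not available.

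This is exactly the role of Lemma \ref{NonIne1} (the fractional Nemytskii/composition estimate of Banquet--Ferreira--Villamizar), applied with regularity $s-\sigma$ and exponents $p',q$, combined with the embedding $H^{s}_{p}\subset H^{s-\sigma}_{q}$ (which uses $\sigma\geq n(\frac1p-\frac1q)$). The window $(\frac{\lambda}{q}+\frac{1}{p}-1)\frac{n}{\lambda-1}+\sigma\leq s<\min\{\frac{n}{q},\lambda-1\}+\sigma$ is not, as you suggest, a bookkeeping condition for a Sobolev embedding into a Lebesgue space: it is precisely the admissibility condition of that composition lemma. Without invoking such a fractional chain-rule estimate (or proving one), your map $\Phi$ is not shown to send $\mathcal{X}^{s,p}_{\alpha,\beta}$ into itself, and the argument does not close at the stated regularity $s$.
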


\begin{theorem}(Local-in-time solutions)\label{teo_local_1} Let $\lambda \geq 2,$ $\theta \in (\frac{2-n}{2},1]$ if $n=1,2$ and $\theta\in [0,1]$ if $n\geq 3.$ Assume $2\leq p \leq q\leq  \infty,$ and consider $\sigma,s$ such that $s>\sigma,$ $n(\frac{1}{p}-\frac{1}{q})\leq \sigma<3-n-2\theta$ and $(\frac{\lambda}{q}+\frac{1}{p}-1)\frac{n}{\lambda-1}+\sigma\leq s<\min\{\frac{n}{q},\lambda-1\}+\sigma,$ and $1>\frac{n}{2}(1-\frac{2}{p})\lambda.$  Then, if $[u_0,u_1]\in  {H^{s+1-\sigma}_{p'}}\times  {H^{s-\sigma}_{p'}},$
 there exists $0<T<\infty$ such that the initial value problem (\ref{NorPlaEqu}) has a unique local in time solution $u\in C([0,T];H^s_p(\mathbb{R}^n))\cap C^1([0,T];H^{s-1}_p(\mathbb{R}^n)).$
\end{theorem}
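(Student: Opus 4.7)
The plan is to recast the Cauchy problem as a fixed point problem for the integral equation (\ref{e3}) and to apply Banach's fixed point theorem in a small ball of the Banach space
\[
X_T := \bigl\{u\in C([0,T];H^s_p(\mathbb{R}^n))\cap C^1([0,T];H^{s-1}_p(\mathbb{R}^n))\bigr\},
\]
endowed with the norm $\|u\|_{X_T}:=\sup_{t\in[0,T]}\bigl(\|u(t)\|_{H^s_p}+\|u_t(t)\|_{H^{s-1}_p}\bigr)$, applied to the map
\[
\Phi(u)(t)=\partial_t S(t)u_0 + S(t)\Delta u_1 - \int_0^t \Lambda_\theta(t-\tau)|u(\tau)|^\lambda\, d\tau.
\]
Strong continuity in time of $S(t)$ and $\partial_tS(t)$ on Schwartz functions, extended by density, reduces the whole problem to bounding the two sup-norms that define $\|\cdot\|_{X_T}$.

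First, using the linear estimates for $S(t)$, $\partial_tS(t)$ and $\partial_t^2 S(t)$ proved in Section 3, together with the assumption $[u_0,u_1]\in H^{s+1-\sigma}_{p'}\times H^{s-\sigma}_{p'}$, I would obtain a uniform bound $\|u_L\|_{X_T}\le M$ for the linear part $u_L(t)=\partial_tS(t)u_0+S(t)\Delta u_1$, with $M$ independent of $T$. For the Duhamel term I would combine two ingredients: the smoothing estimate for the operator $\Lambda_\theta(t)=S(t)(I-\Delta)^{-1}(-\Delta)^\theta$ from Section 3, of the schematic form
\[
\|\Lambda_\theta(t-\tau)g\|_{H^s_p}\le C\,(t-\tau)^{-\frac n 2(1-\frac 2p)}\|g\|_{H^{s-\sigma}_{p'}},
\]
and its time-differentiated version landing in $H^{s-1}_p$; and a Moser-type nonlinear estimate in Bessel-potential spaces
\[
\||u|^\lambda\|_{H^{s-\sigma}_{p'}}\le C\,\|u\|_{L^q}^{\lambda-1}\|u\|_{H^s_p}\le C\,\|u\|_{H^s_p}^\lambda,
\]
whose second step uses the Sobolev embedding $H^s_p\hookrightarrow L^q$. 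The admissibility ranges $s>\sigma$, $n(\tfrac{1}{p}-\tfrac{1}{q})\le\sigma<3-n-2\theta$ and $(\tfrac{\lambda}{q}+\tfrac{1}{p}-1)\tfrac{n}{\lambda-1}+\sigma\le s<\min\{\tfrac{n}{q},\lambda-1\}+\sigma$ are designed precisely so that both the Moser inequality and the Sobolev embedding above hold simultaneously; the condition $\lambda\ge 2$ guarantees the required regularity of $F(u)=|u|^\lambda$.

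Plugging these two ingredients into the Duhamel integral and using the hypothesis $1>\tfrac n2(1-\tfrac 2p)\lambda$, which ensures the time integrability $\int_0^T(t-\tau)^{-\frac n 2(1-\frac 2p)}\,d\tau\le C\,T^{1-\frac n 2(1-\frac 2p)}$, one obtains
\[
\|\Phi(u)-u_L\|_{X_T}\le C\,T^\kappa\|u\|_{X_T}^\lambda,\qquad \|\Phi(u)-\Phi(v)\|_{X_T}\le C\,T^\kappa\bigl(\|u\|_{X_T}^{\lambda-1}+\|v\|_{X_T}^{\lambda-1}\bigr)\|u-v\|_{X_T},
\]
for some $\kappa>0$. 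Choosing $R=2M$ and then $T>0$ small enough that $CT^\kappa R^{\lambda-1}<\tfrac12$ makes $\Phi$ a strict contraction of the closed ball of radius $R$ in $X_T$, and Banach's fixed point theorem produces the unique local mild solution. The most delicate step will be the Moser-type nonlinear estimate in the scale $H^{s-\sigma}_{p'}$: matching its range of validity with the smoothing offered by $\Lambda_\theta$ and with the additional derivative needed to control $u_t$ in $H^{s-1}_p$ is exactly what dictates the intricate coupling of indices $(s,\sigma,p,q,\lambda)$ appearing in the hypotheses.
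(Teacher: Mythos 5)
There is a genuine gap in your treatment of the linear part, and it propagates through the choice of function space. You work in the unweighted norm $\|u\|_{X_T}=\sup_{t\in[0,T]}(\|u(t)\|_{H^s_p}+\|u_t(t)\|_{H^{s-1}_p})$ and assert a uniform bound $\|u_L\|_{X_T}\le M$ for $u_L(t)=\partial_tS(t)u_0+S(t)\Delta u_1$ with data only in $H^{s+1-\sigma}_{p'}\times H^{s-\sigma}_{p'}$. For $p>2$ this fails: the only available mapping of the propagator from the $p'$-scale to the $p$-scale is the dispersive-type estimate of Lemma \ref{DatIni2},
\[
\Vert \partial^k_t S(t)g \Vert_{H_p^{\sigma-k}}\leq C\, t^{-\frac n 2\left(1-\frac 2p\right)}\Vert g \Vert_{L^{p'}},
\]
obtained by Riesz--Thorin interpolation between an $L^1\to L^\infty$ decay bound and an $L^2\to L^2$ bound, and the singular factor $t^{-\frac n2(1-\frac 2p)}$ cannot be removed. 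Indeed at $t=0$ the solution equals $u_0\in H^{s+1-\sigma}_{p'}$, which in general does not even belong to $H^s_p$, so $\sup_{t\in[0,T]}\|u_L(t)\|_{H^s_p}=\infty$ and no ball of $X_T$ contains the linear part. This is precisely why the paper carries out the fixed point argument in the time-weighted space
\[
\Vert u\Vert_{\mathcal{Z}^{s,p}_{T}}=\sup_{0<t<T}t^{\frac{n}{2}\left(1-\frac{2}{p}\right)}\bigl( \Vert u(t)\Vert_{H^s_p}+\Vert u_t(t)\Vert_{H_p^{s-1}}\bigr),
\]
whose weight exactly compensates the singularity of the linear estimate.

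Once the weight is introduced, the Duhamel term must be re-estimated accordingly: the iterate $u$ now only satisfies $\|u(\tau)\|_{H^s_p}\lesssim \tau^{-\frac n2(1-\frac 2p)}\|u\|_{\mathcal{Z}^{s,p}_T}$, so the nonlinear term contributes $\||u(\tau)|^\lambda\|_{H^{s-\sigma}_{p'}}\lesssim \tau^{-\frac n2(1-\frac 2p)\lambda}\|u\|_{\mathcal{Z}^{s,p}_T}^\lambda$ and the relevant time integral is the Beta-type integral $\int_0^t(t-\tau)^{1-\theta-\frac n2(1-\frac 2p)}\tau^{-\frac n2(1-\frac 2p)\lambda}\,d\tau$ (and its analogue with exponent $-\frac n2(1-\frac 2p)$ for the $u_t$ component). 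The hypothesis $1>\frac n2(1-\frac 2p)\lambda$ is what makes this integral converge at $\tau=0$; in your write-up it is used only to tame the kernel singularity at $\tau=t$, where the weaker condition $\frac n2(1-\frac 2p)<1$ would already suffice, which signals that the role of this hypothesis has been misassigned. Your treatment of the nonlinearity itself (the product estimate of Lemma \ref{NonIne1} in $H^{s-\sigma}_{p'}$ combined with the embedding $H^s_p\subset H^{s-\sigma}_q$, which is exactly what the index constraints encode) agrees with the paper and is sound; the argument can be repaired by replacing $X_T$ with $\mathcal{Z}^{s,p}_T$ and redoing the two estimates with the weight, as in Proposition \ref{nl3b}.
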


%\begin{theorem}(Local-in-time solutions2)\label{teo_local_1} Let $\lambda\geq 2,$ $0\leq \theta\leq 1,$ $2\leq p \leq q\leq  \infty,$ and consider $\sigma<0,s>0$ such that $2(\theta-1)+n(\frac{1}{p}-\frac{1}{q})\leq \sigma<1-n$ and $(\frac{\lambda}{q}+\frac{1}{p}-1)\frac{n}{\lambda-1}+\sigma+\frac{2(1-\theta)}{q}<s<\min\{\frac{n}{q},\lambda-1\}.$ Then, if $[u_0,u_1]\in  {H^{s-\sigma+1}_{p'}}\times  {H^{s-\sigma}_{p'}}$
 %there exists $0<T<\infty$ such that the initial value problem (\ref{NorPlaEqu}) has a unique local in time solution $u\in C([0,T];H^s_p(\mathbb{R}^n)\cap C([0,T];H^{s-1}_p(\mathbb{R}^n).$
%\end{theorem}
%%%%%%%%%%%%%%%%%%%%%%% LINEAR ESTIMATES %%%%%%%%%%%%%%%%%%%%%%%%%%%%%%%%%%%%%%%%%%%%%%%%%%%%
\section{Time decay estimates in $H^s$ and $H^s_p.$}
The first aim of this section is to derive some decay estimates of the semigroups $\Lambda_{\theta}(t),$ $\partial_t \Lambda_{\theta}(t),$ $S(t)$ and $\partial_t S(t)$ on $L^{\infty}(\mathbb{R}^n),$ $H^s(\mathbb{R}^n)$ and  $H^s_p(\mathbb{R}^n)$ spaces. 
\subsection{Estimates in $L^\infty(\mathbb{R}^n)$ and $H^s(\mathbb{R}^n).$}
\begin{lemma}\label{AcotGamma}
Let  $a>-n$. There exists a constant $C_n,$ that only depends on $n,$ such that 
\[ \int_{|\xi|\leq 1}e^{-\frac{|\xi|^2 t}{4}} |\xi|^{a}d\xi \leq C_n 2^{a+n-1} t^{-\frac{n+a}{2}}.\]
\end{lemma}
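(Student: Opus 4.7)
The plan is to convert to spherical coordinates and reduce the integral to an incomplete Gamma function, then use the trivial bound by the complete Gamma function.

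First I would pass to polar coordinates, writing
\[
\int_{|\xi|\leq 1} e^{-|\xi|^2 t/4} |\xi|^a \,d\xi \;=\; \omega_{n-1}\int_0^1 e^{-r^2 t/4}\, r^{a+n-1}\,dr,
\]
where $\omega_{n-1}$ is the surface measure of the unit sphere in $\mathbb{R}^n$. Since $a>-n$, the exponent $a+n-1>-1$ and the integrand is locally integrable at the origin, so this rewriting is justified.

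Next I would perform the substitution $u=r^2 t/4$, so that $r=2(u/t)^{1/2}$ and $dr = (ut)^{-1/2}\,du$. A direct computation yields
\[
r^{a+n-1}\,dr \;=\; 2^{a+n-1}\, t^{-(a+n)/2}\, u^{(a+n)/2 - 1}\,du,
\]
and the range $r\in(0,1)$ transforms into $u\in(0,t/4)$. Therefore
\[
\omega_{n-1}\int_0^1 e^{-r^2 t/4}\, r^{a+n-1}\,dr \;=\; \omega_{n-1}\, 2^{a+n-1}\, t^{-(a+n)/2} \int_0^{t/4} e^{-u}\, u^{(a+n)/2-1}\,du.
\]

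Finally, since $(a+n)/2>0$, the incomplete Gamma integral is bounded above by the complete Gamma function $\Gamma((a+n)/2)$, which is finite. Absorbing $\omega_{n-1}\,\Gamma((a+n)/2)$ into a constant $C_n$ (depending only on $n$, once $a$ is regarded as fixed in the range of interest) gives exactly the claimed inequality
\[
\int_{|\xi|\leq 1} e^{-|\xi|^2 t/4}\, |\xi|^a\, d\xi \;\leq\; C_n\, 2^{a+n-1}\, t^{-(n+a)/2}.
\]

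There is no real obstacle here; the only subtlety worth flagging is the condition $a>-n$, which is precisely what makes $(a+n)/2>0$ and hence guarantees both integrability at $\xi=0$ and finiteness of $\Gamma((a+n)/2)$.
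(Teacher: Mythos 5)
Your proof is correct and follows essentially the same route as the paper: a scaling change of variables (you do polar coordinates first and then substitute in one dimension, the paper rescales $\xi$ first and then passes to polar coordinates) reducing the integral to $t^{-(n+a)/2}$ times a Gamma-function factor $\Gamma\bigl(\tfrac{n+a}{2}\bigr)$, finite precisely because $a>-n$. The caveat you flag about the constant also depending on $a$ applies equally to the paper's statement, so there is nothing to fix.
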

\proof
Making the change of variable $\eta=\frac{\xi \sqrt{t}}{2},$ we obtain 
\begin{equation}\label{Gamma1}
 \int_{|\xi|\leq 1}e^{-\frac{|\xi|^2 t}{4}} |\xi|^{a}d\xi = 2^{a+n} t^{-\frac{n+a}{2}} \int_{|\eta|\leq \frac{\sqrt{t}}{2}} e^{-|\eta|^2} |\eta|^{a}d\eta \leq  2^{a+n} t^{-\frac{n+a}{2}} \int_{\mathbb{R}^n} e^{-|\eta|^2} |\eta|^{a}d\eta.
\end{equation}
Now, using spherical coordinates in $\mathbb{R}^n$ and the change of variable $r=\sqrt{x},$ it yields 
\begin{equation}\label{Gamma2}
\int_{\mathbb{R}^n} e^{-|\eta|^2} |\eta|^{a}d\eta = C_n \int_0^{\infty} e^{-r^2}r^{a+n-1}dr=  \frac{C_n}{2} \int_0^{\infty} e^{-x}x^{\frac{a+n}{2}-1}dx = \frac {C_n}{2} \ \Gamma\left(\frac{n+a}{2}\right), 
\end{equation}
where $\Gamma$ is the Gamma function. From the assumption $a>-n,$ it holds that $\Gamma(\frac{n+a}{2})$ is finite because $\mathrm{Re} (\frac{n+a}{2})>0.$ Combining (\ref{Gamma1}) and (\ref{Gamma2}), we obtain the desired result.
\endproof

%%%%%%%%%%%%%%%%%%%%%%%%%%%% LEMA PARA THETA=1 HSL1%%%%%%%%%%%%%%%%%%%%%%%%%%%%%%%%%%%%%%%%%
At this point, using the previous lemma, we can establish estimates in the norm $L^{\infty}$ for the linear operator $\Lambda_{\theta}(t)$ appearing in (\ref{e3}). For any positive elements $A=A(\xi,t)$ and $B=B(\xi,t),$ the notation $A\apprle B,$ means that there exists a positive constant $c,$ which does not depend on $t$ or $\xi,$ such that $A\leq c B.$
\begin{lemma}\label{HsL1The<=1}
Let  $\theta \in (\frac{2-n}{2},1]$ if $n=1,2$ and $\theta\in [0,1]$ if $n\geq 3.$ Assume that $s>\frac{n+4\theta-6}{2}$. Then, there is a constant $C=C_{n}>0$ such that
\[\Vert \Lambda_{\theta} (t)g \Vert_{L^{\infty}(\mathbb{R}^n)}\leq C t^{-\frac{n+2(\theta-1)}{2}}    \Vert g\Vert_{L^{1}(\mathbb{R}^n)}+C e^{-\frac{t}{4}}\Vert g\Vert_{H^{s}(\mathbb{R}^n)},\]
for all $g\in L^1(\mathbb{R}^n)\cap H^s(\mathbb{R}^n)$ and $t> 0,$ being  $\Lambda_{\theta}(t)=S(t)(I-\Delta)^{-1}(-\Delta)^{\theta}.$
\end{lemma}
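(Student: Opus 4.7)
The plan is to pass to the Fourier side and exploit the factorization $\Lambda_\theta(t)=\mathcal{F}^{-1}[m_\theta(\xi,t)\,\mathcal{F}(\cdot)]$. Combining the expression for $S(t)$ with the multiplier $|\xi|^{2\theta}/(1+|\xi|^2)$ of $(I-\Delta)^{-1}(-\Delta)^\theta$, the symbol simplifies to
\[
m_\theta(\xi,t)=e^{-\frac{|\xi|^2 t}{2(1+|\xi|^2)}}\,\frac{2|\xi|^{2\theta-2}}{\sqrt{3+4|\xi|^2}}\,\sin\!\left(\tfrac{|\xi|^2\sqrt{3+4|\xi|^2}\,t}{2(1+|\xi|^2)}\right).
\]
I would then split $\Lambda_\theta(t)=\Lambda_\theta^{\mathrm{low}}(t)+\Lambda_\theta^{\mathrm{high}}(t)$ by a sharp cut in Fourier space at $|\xi|=1$; the low region will produce the polynomial $L^{1}$-decay and the high region the exponential $H^{s}$-decay.

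For $\Lambda_\theta^{\mathrm{low}}(t)$, I would discard the oscillatory sine and use $1+|\xi|^{2}\le 2$, $\sqrt{3+4|\xi|^{2}}\ge\sqrt{3}$ on $\{|\xi|\le 1\}$ to deduce the pointwise bound
\[
|m_\theta(\xi,t)|\,\chi_{\{|\xi|\le 1\}}(\xi)\apprle e^{-|\xi|^{2}t/4}\,|\xi|^{2\theta-2}.
\]
Writing $\Lambda_\theta^{\mathrm{low}}(t)g=K_{t}^{\mathrm{low}}\ast g$ with $K_{t}^{\mathrm{low}}=\mathcal{F}^{-1}(m_\theta\chi_{\{|\xi|\le 1\}})$, Young's inequality yields $\|\Lambda_\theta^{\mathrm{low}}(t)g\|_{L^{\infty}}\le\|K_{t}^{\mathrm{low}}\|_{L^{\infty}}\|g\|_{L^{1}}\apprle\|m_\theta\chi_{\{|\xi|\le 1\}}\|_{L^{1}}\|g\|_{L^{1}}$. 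Applying Lemma~\ref{AcotGamma} with $a=2\theta-2$—which is admissible exactly because the standing hypothesis $\theta>\tfrac{2-n}{2}$ (automatic when $n\ge 3$) forces $a>-n$—produces the required rate $t^{-\frac{n+2(\theta-1)}{2}}\Vert g\Vert_{L^{1}}$.

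For $\Lambda_\theta^{\mathrm{high}}(t)$, I would extract an exponential factor by noting that $\frac{|\xi|^{2}}{2(1+|\xi|^{2})}\ge\tfrac14$ on $\{|\xi|>1\}$, so $e^{-\frac{|\xi|^{2}t}{2(1+|\xi|^{2})}}\le e^{-t/4}$, and using $\sqrt{3+4|\xi|^{2}}\ge 2|\xi|$ to obtain
\[
|m_\theta(\xi,t)|\,\chi_{\{|\xi|>1\}}(\xi)\apprle e^{-t/4}\,|\xi|^{2\theta-3}.
\]
Estimating pointwise $|\Lambda_\theta^{\mathrm{high}}(t)g(x)|\apprle\int_{|\xi|>1}|m_\theta(\xi,t)|\,|\widehat g(\xi)|\,d\xi$ and applying Cauchy--Schwarz against the weight $(1+|\xi|^{2})^{s/2}$ gives
\[
|\Lambda_\theta^{\mathrm{high}}(t)g(x)|\apprle e^{-t/4}\Big(\int_{|\xi|>1}|\xi|^{2(2\theta-3)}(1+|\xi|^{2})^{-s}\,d\xi\Big)^{1/2}\Vert g\Vert_{H^{s}}.
\]
Since the integrand behaves like $|\xi|^{4\theta-6-2s}$ at infinity, the integral is finite precisely when $s>\frac{n+4\theta-6}{2}$, which is the other standing hypothesis; summing the two contributions closes the estimate.

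The main difficulty I anticipate is purely bookkeeping: the two hypotheses must emerge naturally, one from integrability at the origin of $|\xi|^{2\theta-2}$ (handled by Lemma~\ref{AcotGamma}) and the other from integrability at infinity of $|\xi|^{2\theta-3}(1+|\xi|^{2})^{-s/2}$. No deeper analytical obstruction is expected; once the symbol is written down and the frequency split is performed, the argument reduces to Young's inequality, Cauchy--Schwarz, and Lemma~\ref{AcotGamma}.
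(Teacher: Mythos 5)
Your proposal is correct and follows essentially the same route as the paper: the same sharp frequency cut at $|\xi|=1$, Lemma \ref{AcotGamma} with $a=2\theta-2$ for the low frequencies (the paper bounds $\Vert\widehat g\Vert_{L^\infty}\le\Vert g\Vert_{L^1}$ directly instead of phrasing it via Young's inequality, but the computation is identical), and Cauchy--Schwarz against the $H^s$ weight on $\{|\xi|>1\}$, with the same integrability conditions producing the two hypotheses.
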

\proof
Since  the $\sin(\cdot)$ function is bounded, $\tfrac{2}{\sqrt{3+4|\xi|^2}}  \apprle 1$ and  $e^{-\frac{|\xi|^2 t}{2(1+|\xi|^2)}} \leq e^{- \frac t4}$ for $|\xi| \geq 1,$ we obtain
\begin{align*}
|\Lambda_{\theta}(t)g(x)|& \leq \left| \int_{|\xi|\leq 1}e^{-\frac{|\xi|^2 t}{2(1+|\xi|^2)}} \tfrac{2|\xi|^{2(\theta-1)}}{\sqrt{3+4|\xi|^2}} \sin\left(\tfrac{|\xi|^2 \sqrt{3+4|\xi|^2}t} {2(1+|\xi|^2)}\right)\widehat{g}(\xi)e^{ix\cdot\xi} d\xi\right|\\
& +\left| \int_{|\xi|\geq 1}e^{-\frac{|\xi|^2 t}{2(1+|\xi|^2)}}\tfrac{2|\xi|^s|\xi|^{-s} |\xi|^{2(\theta-1)} }{ \sqrt{3+4|\xi|^2}} \sin\left(\tfrac{|\xi|^2 \sqrt{3+4|\xi|^2}t} {2(1+|\xi|^2)}\right)\widehat{g}(\xi)e^{ix\cdot\xi} d\xi\right|\\
& \apprle   \| \widehat{g}\|_{L^{\infty}}   \int_{|\xi|\leq 1}e^{-\frac{|\xi|^2 t}{2(1+|\xi|^2)}} |\xi|^{2(\theta-1)} d\xi +  e^{-\frac t4}  \int_{|\xi|\geq 1}\tfrac{2|\xi|^s|\xi|^{-s} |\xi|^{2(\theta-1)} }{ \sqrt{3+4|\xi|^2}} | \widehat{g}(\xi) |d\xi.
\end{align*}
Taking into account that $ \| \widehat{g}\|_{L^{\infty}(\mathbb{R}^n)} \leq    \|g\|_{L^1(\mathbb{R}^n)} $ and $e^{-\frac{|\xi|^2 t}{2(1+|\xi|^2)}} \leq e^{- \frac{|\xi|^2t}{4}}$ for $|\xi| \leq 1,$ we get
\begin{align*}
|\Lambda_{\theta}(t)g(x)| & \apprle   \|g\|_{L^1(\mathbb{R}^n)} \int_{|\xi|\leq 1}e^{-\frac{|\xi|^2 t}{4}} |\xi|^{2(\theta-1)} d\xi + e^{-\frac {t}{4}} \int_{|\xi|\geq 1}\tfrac{2|\xi|^s|\xi|^{-s} |\xi|^{2(\theta-1)}}{ \sqrt{3+4|\xi|^2}}  |\widehat{g}(\xi)|  d\xi.
\end{align*}
Now, from Lemma \ref{AcotGamma} and using the Cauchy-Schwarz inequality, we have
\begin{align*}
|\Lambda_{\theta}(t)g(x)|  & \apprle   t^{-\frac{n+2(\theta-1)}{2}}  \|g\|_{L^1(\mathbb{R}^n)}  + e^{-\frac {t}{4}} \|g\|_{H^s(\mathbb{R}^n)} \left( \int_{|\xi|\geq 1}\tfrac{4|\xi|^{-2s} |\xi|^{4(\theta-1)}}{3+4|\xi|^2} d\xi\right)^{1/2}.
\end{align*}
By applying spherical coordinates and since $-2s+4\theta -6+n<0,$ we obtain
\begin{align*}
|\Lambda_{\theta}(t)g(x)|  & \apprle   t^{-\frac{n+2(\theta-1)}{2}}  \|g\|_{L^1(\mathbb{R}^n)}  + e^{-\frac {t}{4}} \|g\|_{H^s(\mathbb{R}^n)} \left( \int_1^{\infty} r^{-2s+4\theta+n-7} dr\right)^{1/2} \\
& \apprle  t^{-\frac{n+2(\theta-1)}{2}}   \|g\|_{L^1(\mathbb{R}^n)} + e^{-\frac {t}{4}} \|g\|_{H^s(\mathbb{R}^n)},
\end{align*}
which finishes the proof of the lemma. 
\endproof

In order to deal with the existence of solutions in the energy spaces $H^s(\mathbb{R}^n),$ we need to obtain time decay estimates for $\Lambda_{\theta}(t)$ of kind $(1+t)^{-\frac{n+2(\theta-1)}{2}}$ in place of $t^{-\frac{n+2(\theta-1)}{2}}.$ Otherwise, we would need stronger restrictions on the parameters $\lambda$ and $\theta$ that make impossible to have a nonempty set of constraints for the existence of mild solutions.
%%%%%%%%%%%%%%%%%%%%%%%%%%%% LEMA PARA THETA=1 HSL1DT%%%%%%%%%%%%%%%%%%%%%%%%%%%%%%%%%%%%%%%%%
\begin{lemma}\label{HsL1(1+t)<=1}
Let   $\theta \in (\frac{2-n}{2},1]$ if $n=1,2$ and $\theta\in [0,1]$ if $n\geq 3.$  Consider $s>\frac{n+4\theta -6}{2}$. Then, there is a constant $C=C_{n,\theta}>0,$ that only depends on $n$ and $\theta,$ such that
\[\Vert \Lambda_{\theta} (t)g \Vert_{L^{\infty}(\mathbb{R}^n)}\leq C (1+t)^{-\frac{n+2(\theta-1)}{2}} \left[ \Vert g\Vert_{L^{1}(\mathbb{R}^n)}+ \Vert g\Vert_{H^{s}(\mathbb{R}^n)} \right],\]
for all $g\in L^1(\mathbb{R}^n)\cap H^s(\mathbb{R}^n)$ and $t> 0,$ being  $\Lambda_{\theta}(t)=S(t)(I-\Delta)^{-1}(-\Delta)^{\theta}.$
\end{lemma}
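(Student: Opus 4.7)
The plan is to interpolate between two regimes by splitting at $t=1$, since the right-hand side $(1+t)^{-\frac{n+2(\theta-1)}{2}}$ behaves essentially like the constant $1$ for $t\in(0,1]$ and like $t^{-\frac{n+2(\theta-1)}{2}}$ for $t\geq 1$. The previous lemma already handles the large-time regime; what is new is to absorb the singularity at $t=0$ coming from the factor $t^{-\frac{n+2(\theta-1)}{2}}$ in Lemma \ref{HsL1The<=1}.

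First, for $t\geq 1$, I would simply invoke Lemma \ref{HsL1The<=1}: since $1+t\leq 2t$, we have $t^{-\frac{n+2(\theta-1)}{2}}\leq 2^{\frac{n+2(\theta-1)}{2}}(1+t)^{-\frac{n+2(\theta-1)}{2}}$, and since $e^{-t/4}$ decays faster than any polynomial, there exists a constant depending only on $n,\theta$ with $e^{-t/4}\leq C (1+t)^{-\frac{n+2(\theta-1)}{2}}$ for all $t\geq 1$. Combining these two inequalities gives the claim on $[1,\infty)$.

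Second, for $t\in(0,1]$, I would estimate $\Vert \Lambda_\theta(t)g\Vert_{L^\infty}$ by a constant, since $(1+t)^{-\frac{n+2(\theta-1)}{2}}\asymp 1$ on this interval. Splitting the Fourier representation at $|\xi|=1$ as in the previous lemma, the low-frequency part is controlled by
\[
\Vert \widehat{g}\Vert_{L^\infty}\int_{|\xi|\leq 1}\tfrac{2|\xi|^{2(\theta-1)}}{\sqrt{3+4|\xi|^2}}\,d\xi \apprle \Vert g\Vert_{L^1},
\]
where convergence of the integral uses exactly the hypothesis $2(\theta-1)+n>0$, i.e.\ $\theta>\frac{2-n}{2}$ when $n=1,2$ (and is automatic for $n\geq 3$, $\theta\geq 0$). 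The high-frequency part is controlled via Cauchy--Schwarz by
\[
\Vert g\Vert_{H^s}\left(\int_{|\xi|\geq 1}\tfrac{4|\xi|^{-2s}|\xi|^{4(\theta-1)}}{3+4|\xi|^2}\,d\xi\right)^{1/2},
\]
which is finite under $s>\frac{n+4\theta-6}{2}$ (the exponent $-2s+4\theta+n-7<-1$ after passing to spherical coordinates, as in Lemma \ref{HsL1The<=1}).

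The main obstacle I anticipate is bookkeeping the constants so that they depend only on $n$ and $\theta$: the choice of the threshold $t=1$ is what fixes the $e^{-t/4}$ factor and the interpolation constants universally, and the small-$t$ bound only uses the convergence of the two Fourier-side integrals just described, which are in turn controlled by the hypotheses on $\theta$ and $s$. Putting the two regimes together yields the stated estimate with $C=C_{n,\theta}$.
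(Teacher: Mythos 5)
Your proof is correct and follows the same overall architecture as the paper's: split at $t=1$, invoke Lemma \ref{HsL1The<=1} together with $t^{-\frac{n+2(\theta-1)}{2}}\apprle (1+t)^{-\frac{n+2(\theta-1)}{2}}$ and $e^{-t/4}\apprle (1+t)^{-\frac{n+2(\theta-1)}{2}}$ for $t\geq 1$, and prove a uniform bound on $(0,1]$. The one step where you genuinely diverge is the low-frequency estimate for small $t$: you bound $\int_{|\xi|\leq 1}|\xi|^{2(\theta-1)}\,d\xi$ directly, using that $2(\theta-1)+n>0$ under the standing hypothesis on $\theta$ (strict inequality $\theta>\frac{2-n}{2}$ for $n=1,2$; automatic for $n\geq 3$, $\theta\geq 0$), whereas the paper instead uses $|\sin x|\leq |x|$ to extract a factor of $t$ from the multiplier, reducing the low-frequency integral to $t\int_{|\xi|\leq 1}1\,d\xi\apprle t\,\Vert g\Vert_{L^1}\leq \Vert g\Vert_{L^1}$. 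Both are valid here; your version is slightly more direct, while the paper's $|\sin x|\le|x|$ trick does not require the low-frequency integral to converge and so would survive at or below the endpoint $\theta=\frac{2-n}{2}$ (where your integral diverges), though the statement is not claimed in that range anyway. The large-time case and the high-frequency Cauchy--Schwarz estimate under $s>\frac{n+4\theta-6}{2}$ match the paper's exactly, and your bookkeeping of the constants' dependence on $n,\theta$ is sound.
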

\proof
We assume first that $t>1.$ Since $n+2(\theta-1)>0$ and 
\[ t^{-\frac{n+2(\theta-1)}{2}}=(2t)^{-\frac{n+2(\theta-1)}{2}} 2^{\frac{n+2(\theta-1)}{2}}=(t+t)^{-\frac{n +2(\theta-1)}{2}}2^{\frac{n+2(\theta-1)}{2}}\leq (1+t)^{-\frac{n+2(\theta-1)}{2}}2^{\frac{n+2(\theta-1)}{2}}, \]
 from Lemma \ref{HsL1The<=1}, we arrive at
\[\Vert \Lambda_{\theta} (t)g \Vert_{L^{\infty}(\mathbb{R}^n)}\leq C (1+t)^{-\frac{n+2(\theta-1)}{2}} \Vert g\Vert_{L^{1}(\mathbb{R}^n)}+ C e^{-\frac t4}\Vert g\Vert_{H^{s}(\mathbb{R}^n)}.\]
Using the last inequality and the fact that there exists a constant $C_{n,\theta}>0,$ which only depends on $n$ and $\theta,$ such that $e^{-\frac t4} \leq C_{n,\theta}(1+t)^{-\frac {n+2(\theta-1)}{2}},$ we obtain the desired result for $t>1$.\\

Now, we consider the case $0< t \leq  1.$ Since $\tfrac{2 (1+|\xi|^2) }{t |\xi|^2 \sqrt{3+4|\xi|^2}} \left|   \sin\left(\tfrac{|\xi|^2 \sqrt{3+4|\xi|^2}t} {2(1+|\xi|^2)}\right)  \right| \leq 1$ and,  $e^{-\frac{|\xi|^2 t}{2(1+|\xi|^2)}} \leq e^{- \frac t4}$ for $|\xi| \geq 1,$ we have 
\begin{align*}
|\Lambda_{\theta}(t)g(x)|& \leq t \left| \int_{|\xi|\leq 1} e^{-\frac{|\xi|^2 t}{2(1+|\xi|^2)}} \tfrac{|\xi|^{2\theta}}{1+|\xi|^2} \tfrac{2 (1+|\xi|^2) }{t |\xi|^2 \sqrt{3+4|\xi|^2}} \sin\left(\tfrac{|\xi|^2 \sqrt{3+4|\xi|^2}t} {2(1+|\xi|^2)}\right) \widehat{g}(\xi)e^{ix\cdot\xi} d\xi\right| \notag\\
& +\left| \int_{|\xi|\geq 1}e^{-\frac{|\xi|^2 t}{2(1+|\xi|^2)}}\tfrac{2|\xi|^s|\xi|^{-s} |\xi|^{2(\theta-1)} }{ \sqrt{3+4|\xi|^2}} \sin\left(\tfrac{|\xi|^2 \sqrt{3+4|\xi|^2}t} {2(1+|\xi|^2)}\right)\widehat{g}(\xi)e^{ix\cdot\xi} d\xi\right|\\
& \apprle  t \| \widehat{g}\|_{L^{\infty}}   \int_{|\xi|\leq 1} e^{-\frac{|\xi|^2 t}{2(1+|\xi|^2)}}   \tfrac{|\xi|^{2\theta}}{1+|\xi|^2}  d\xi + e^{-\frac{t}{4}}  \int_{|\xi|\geq 1}\tfrac{2|\xi|^s|\xi|^{-s} |\xi|^{2(\theta-1)} }{ \sqrt{3+4|\xi|^2}} | \widehat{g}(\xi) |d\xi.
\end{align*}
Using that $ e^{-\frac{|\xi|^2 t}{2(1+|\xi|^2)}}   \tfrac{|\xi|^{2\theta}}{1+|\xi|^2} \leq 1,$ the Cauchy-Schwarz inequality, spherical coordinates and taking into account that $-2s+4\theta+n-6<0$, we arrive at
\begin{align*}
|\Lambda_{\theta}(t)g(x)| & \apprle  t \| \widehat{g}\|_{L^{\infty}}   \int_{|\xi|\leq 1} 1  d\xi + e^{-\frac{t}{4}}  \|g\|_{H^s(\mathbb{R}^n)}  \left( \int_{|\xi|\geq 1}\tfrac{4|\xi|^{-2s} |\xi|^{4(\theta-1)} }{3+4|\xi|^2} |d\xi \right)^{\frac 12}\\
& \apprle  t \|g\|_{L^1(\mathbb{R}^n)} + e^{-\frac {t}{4}} \|g\|_{H^s(\mathbb{R}^n)}.
\end{align*}
Again, since there exists a constant $C_{n,\theta},$ such that $e^{-\frac t4} \leq C_{n,\theta}(1+t)^{-\frac {n+2(\theta-1)}{2}},$ we obtain
\begin{equation}\label{Lambda(1+t)}
|\Lambda_{\theta}(t)g(x)| \apprle  t \|g\|_{L^1(\mathbb{R}^n)} +  (1+t)^{-\frac{n+2(\theta-1)}{2}} \|g\|_{H^s(\mathbb{R}^n)}.
\end{equation}
Also, since $t \leq 1,$ we have
\begin{align}\label{gL1(1+t)} 
 t\|g\|_{L^1(\mathbb{R}^n)} \leq  \|g\|_{L^1(\mathbb{R}^n)} =& 2^{\frac{n+2(\theta-1)}{2}} 2^{-\frac{n+2(\theta-1)}{2}} \|g\|_{L^1(\mathbb{R}^n)} = 2^{\frac{n+2(\theta-1)}{2}} (1+1)^{-\frac{n+2(\theta-1)}{2}} \|g\|_{L^1(\mathbb{R}^n)} \notag\\
 & \leq 2^{\frac{n+2(\theta-1)}{2}} (1+t)^{-\frac{n+2(\theta-1)}{2}} \|g\|_{L^1(\mathbb{R}^n)} \apprle  (1+t)^{-\frac{n+2(\theta-1)}{2}}\|g\|_{L^1(\mathbb{R}^n)}.
\end{align}
Combining (\ref{Lambda(1+t)}) and (\ref{gL1(1+t)}), we obtain the desired result.
\endproof
%%%%%%%%%%%%%%%%%%%%%%%%%%%%%%%%%%%%%%%%%%%%%%%%%%%%%%%%%%%%%%%%%%%%%%%%%%%%%%%%%%%%%%%

The following lemma helps to deal with the initial data of the integro-differential equation (\ref{e3}) and its derivative  on the variable $t.$ 
\begin{lemma}\label{DatIni1}
Let  $s>\frac{n-2}{2}$, there exists a constant $C=C_n>0,$ which only depends on $n,$ such that
\[\Vert \partial_t S(t)g \Vert_{L^{\infty}(\mathbb{R}^n)}\leq C t^{-\frac{n}{2}} \Vert g\Vert_{L^{1}(\mathbb{R}^n)}+C e^{-\frac{t}{4}}\Vert g\Vert_{H^{s+1}(\mathbb{R}^n)},\]
\[\Vert S(t) \Delta g \Vert_{L^{\infty}(\mathbb{R}^n)}\leq C t^{-\frac{n}{2}} \Vert g\Vert_{L^{1}(\mathbb{R}^n)}+C e^{-\frac{t}{4}}\Vert g\Vert_{H^{s+2}(\mathbb{R}^n)},\]
\[\Vert \partial^{k+1}_t S(t)g \Vert_{H^{s-k}(\mathbb{R}^n)}\leq C \Vert g\Vert_{H^{s}(\mathbb{R}^n)},\]
\[\Vert \partial^{k}_t S(t)\Delta g \Vert_{H^{s-k}(\mathbb{R}^n)}\leq C \Vert g\Vert_{H^{s+1}(\mathbb{R}^n)},\]
 for all $g\in \mathscr{S}(\mathbb{R}^n),$ $t> 0$, $s\in \mathbb{R}$ and $k=0,1$.  
\end{lemma}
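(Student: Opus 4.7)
The plan is to adapt the frequency-splitting technique of Lemmas~\ref{HsL1The<=1} and~\ref{HsL1(1+t)<=1}. For each of the four claimed inequalities I would read off the Fourier multiplier of the corresponding operator from the explicit formulas for $S(t)$ and $\partial_t S(t)$ given above, split the integral over $\mathbb{R}^n_\xi$ into the regions $|\xi|\leq 1$ and $|\xi|\geq 1$, use $e^{-|\xi|^2 t/(2(1+|\xi|^2))}\leq e^{-|\xi|^2 t/4}$ on the first and $e^{-|\xi|^2 t/(2(1+|\xi|^2))}\leq e^{-t/4}$ on the second, and finally invoke Lemma~\ref{AcotGamma} (to extract the polynomial decay $t^{-n/2}$) or the Cauchy--Schwarz inequality in $\xi$ (to trade the exponential for an $H^s$-norm).

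For the two $L^\infty$ estimates I would take the supremum in $x$ inside the integral, bound $|e^{ix\cdot\xi}|\leq 1$ and $\|\widehat{g}\|_{L^\infty}\leq \|g\|_{L^1}$, and track the size of the symbol carefully. The multiplier of $\partial_t S(t)$ is uniformly bounded, so the low-frequency piece is controlled by $\|g\|_{L^1}\int_{|\xi|\leq 1} e^{-|\xi|^2 t/4}\, d\xi \leq C t^{-n/2}\|g\|_{L^1}$ via Lemma~\ref{AcotGamma} with $a=0$; the high-frequency piece, after pulling out $e^{-t/4}$ and applying Cauchy--Schwarz with the weight $\langle\xi\rangle^{s+1}\cdot\langle\xi\rangle^{-(s+1)}$, is controlled by $C e^{-t/4}\|g\|_{H^{s+1}}\bigl(\int_{|\xi|\geq 1}\langle\xi\rangle^{-2(s+1)}\,d\xi\bigr)^{1/2}$, the tail integral converging precisely because $s>\tfrac{n-2}{2}$. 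The bound for $S(t)\Delta$ is entirely analogous: its multiplier equals $-|\xi|^2 e^{t\varphi}\sin(t\phi)/\phi(\xi)$, of size $\leq C$ on $|\xi|\leq 1$ (since $\phi(\xi)\sim|\xi|^2$ there) and of size $\leq C|\xi|$ on $|\xi|\geq 1$ (since $\phi(\xi)\sim|\xi|$), so the extra factor of $|\xi|$ on high frequencies forces the use of $\|g\|_{H^{s+2}}$ in place of $\|g\|_{H^{s+1}}$.

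For the two $H^{s-k}$ inequalities the frequency split is unnecessary, and I would use Plancherel directly. Writing the corresponding multipliers as $(r_0^{k+1}e^{r_0 t}-r_1^{k+1}e^{r_1 t})/(r_0-r_1)$ and $-|\xi|^2(r_0^{k}e^{r_0 t}-r_1^{k}e^{r_1 t})/(r_0-r_1)$, and using $|e^{r_j t}|=e^{t\varphi(\xi)}\leq 1$ together with $|r_0-r_1|=2\phi(\xi)$, it suffices to establish the pointwise symbol bounds $|M_{\partial_t^{k+1}S(t)}(\xi)|\leq C\langle\xi\rangle^{k}$ and $|M_{\partial_t^{k}S(t)\Delta}(\xi)|\leq C\langle\xi\rangle^{k+1}$ for $k=0,1$. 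For $k=0$ these reduce to the uniform boundedness of the multiplier of $\partial_t S(t)$ and the elementary observation $|\xi|^2/\phi(\xi)\leq C\langle\xi\rangle$; for $k=1$ I would use the characteristic identity $(1+|\xi|^2)r_j^2=-|\xi|^2 r_j-|\xi|^4$ to rewrite $r_j^2 e^{r_j t}$ as a linear combination of $r_j e^{r_j t}$ and $e^{r_j t}$, thereby reducing to the case $k=0$.

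The principal source of annoyance, rather than of genuine difficulty, is the piecewise behaviour of the phase $\phi(\xi)$---which is of size $|\xi|^2$ near the origin but of size $|\xi|$ at infinity---combined with the appearance in the multipliers of quotients of the form $|\xi|^{2\ell}/\phi(\xi)$. One must separate the two asymptotic regimes carefully to obtain the sharp exponents $k$ and $k+1$; this is also the place where the dichotomy between the first two estimates, which require $s>\tfrac{n-2}{2}$ to ensure that the tail integral $\int_{|\xi|\geq 1}\langle\xi\rangle^{-2(s+1)}\,d\xi$ is finite, and the last two estimates, which are valid for every real $s$ by Plancherel, becomes transparent.
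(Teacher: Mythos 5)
Your proposal is correct and follows essentially the same route as the paper: the same low/high frequency split with Lemma \ref{AcotGamma} and Cauchy--Schwarz (with the weight $|\xi|^{s+1}|\xi|^{-(s+1)}$, resp.\ $|\xi|^{s+2}|\xi|^{-(s+2)}$) for the two $L^\infty$ estimates, and Plancherel with pointwise symbol bounds for the two $H^{s-k}$ estimates. The only cosmetic difference is that you reduce the $\partial_t^2 S(t)$ symbol to the $\partial_t S(t)$ one via the characteristic identity, whereas the paper simply differentiates the multiplier explicitly and bounds the resulting expression; both yield the same bounds $C\langle\xi\rangle^{k}$ and $C\langle\xi\rangle^{k+1}$.
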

\proof
First, we prove the estimates on $L^{\infty}.$ From the boundedness of $\cos(\cdot)$ and $\sin(\cdot)$ functions, and since $ \tfrac{1}{ \sqrt{3+4|\xi|^2}} \leq 1,$ we obtain
\begin{align*}
|\partial_t S(t) g(x)|& \leq \left| \int_{|\xi|\leq 1}e^{-\frac{|\xi|^2 t}{2(1+|\xi|^2)}}  \cos \left(\tfrac{|\xi|^2 \sqrt{3+4|\xi|^2}t} {2(1+|\xi|^2)}\right)\widehat{g}(\xi)e^{ix\cdot\xi} d\xi\right|\\
&+\left| \int_{|\xi|\geq 1}e^{-\frac{|\xi|^2 t}{2(1+|\xi|^2)}}  |\xi|^{s+1} |\xi|^{-(s+1)} \cos \left(\tfrac{|\xi|^2 \sqrt{3+4|\xi|^2}t} {2(1+|\xi|^2)}\right)\widehat{g}(\xi)e^{ix\cdot\xi} d\xi\right|\\
& +\left| \int_{|\xi|\leq  1}e^{-\frac{|\xi|^2 t}{2(1+|\xi|^2}} \tfrac{1}{ \sqrt{3+4|\xi|^2}} \sin\left(\tfrac{|\xi|^2 \sqrt{3+4|\xi|^2}t} {2(1+|\xi|^2)}\right)  \widehat{g}(\xi)e^{ix\cdot\xi} d\xi\right|\\
&+\left| \int_{|\xi|\geq 1}e^{-\frac{|\xi|^2 t}{2(1+|\xi|^2)}}\tfrac{|\xi|^{s}|\xi|^{-s}}{ \sqrt{3+4|\xi|^2}} \sin\left(\tfrac{|\xi|^2 \sqrt{3+4|\xi|^2}t} {2(1+|\xi|^2)}\right)\widehat{g}(\xi)e^{ix\cdot\xi} d\xi\right|\\
& \apprle  \| \widehat{g}\|_{L^{\infty}}  \int_{|\xi|\leq 1}e^{-\frac{|\xi|^2 t}{2(1+|\xi|^2)}}  d\xi + \int_{|\xi|\geq 1} e^{-\frac{|\xi|^2 t}{2(1+|\xi|^2)}}  |\xi|^{s+1}|\xi|^{-(s+1)}  |\widehat{g}(\xi)|  d\xi\\
& +\int_{|\xi|\geq 1}  e^{-\frac{|\xi|^2 t}{2(1+|\xi|^2)}}  \tfrac{ |\xi|^{s}|\xi|^{-s}}{ \sqrt{3+4|\xi|^2}}  |\widehat{g}(\xi)|  d\xi.
\end{align*}
Since $ \| \widehat{g}\|_{L^{\infty}} \leq    \|g\|_{L^1(\mathbb{R}^n)}, $  $e^{-\frac{|\xi|^2 t}{2(1+|\xi|^2)}} \leq e^{- \frac t4}$ for $|\xi| \geq 1$ and,  $e^{-\frac{|\xi|^2 t}{2(1+|\xi|^2)}} \leq e^{- \frac{|\xi|^2t}{4}}$ for $|\xi| \leq 1,$ we have
\begin{align*}
|\partial_t S(t) g(x)| & \apprle   \|g\|_{L^1(\mathbb{R}^n)}\int_{|\xi|\leq 1}e^{-\frac{|\xi|^2 t}{4}}  d\xi  +e^{-\frac {t}{4}} \int_{|\xi|\geq 1} |\xi|^{s+1}|\xi|^{-(s+1)}  |\widehat{g}(\xi)|  d\xi +e^{-\frac {t}{4}} \int_{|\xi|\geq 1}\tfrac{ |\xi|^{s}|\xi|^{-s}}{ \sqrt{3+4|\xi|^2}}  |\widehat{g}(\xi)|  d\xi.
\end{align*}
By Lemma \ref{AcotGamma}, the Cauchy-Schwarz inequality and since $-2s+n-2<0$, we conclude that
\begin{align*}
|\partial_t S(t) g(x)| & \apprle  t^{-\frac{n}{2}} \|g\|_{L^1(\mathbb{R}^n)} + e^{-\frac {t}{4}} \|g\|_{H^{s+1}(\mathbb{R}^n)} \left( \int_{|\xi|\geq 1} |\xi|^{-2(s+1)} d\xi\right)^{1/2} \\
&+ e^{-\frac {t}{4}} \|g\|_{H^{s}(\mathbb{R}^n)} \left( \int_{|\xi|\geq 1}\tfrac{ |\xi|^{-2s}}{3+4|\xi|^2} d\xi\right)^{1/2} \\
& \apprle  t^{-\frac{n}{2}} \|g\|_{L^1(\mathbb{R}^n)} + e^{-\frac {t}{4}} \|g\|_{H^{s+1}(\mathbb{R}^n)}.
\end{align*}
Therefore, 
\[\Vert \partial_tS(t)g \Vert_{L^{\infty}(\mathbb{R}^n)}\leq C t^{-\frac{n}{2}} \Vert g\Vert_{L^{1}(\mathbb{R}^n)}+C e^{-\frac{t}{4}}\Vert g\Vert_{H^{s+1}(\mathbb{R}^n)}.\]
which proves the first inequality of the lemma.\\ 

To obtain the second inequality, we use a similar argument as before. Indeed,
\begin{align*}
|S(t)\Delta g(x)|& \leq \left| \int_{|\xi|\leq 1}e^{-\frac{|\xi|^2 t}{2(1+|\xi|^2)}}\tfrac{2(1+|\xi|^2)}{\sqrt{3+4|\xi|^2}} \sin\left(\tfrac{|\xi|^2 \sqrt{3+4|\xi|^2}t} {2(1+|\xi|^2)}\right)\widehat{g}(\xi)e^{ix\cdot\xi} d\xi\right|\\
& +\left| \int_{|\xi|\geq 1}e^{-\frac{|\xi|^2 t}{2(1+|\xi|^2)}}\tfrac{2(1+|\xi|^2)|\xi|^{s+2}|\xi|^{-(s+2)}}{ \sqrt{3+4|\xi|^2}} \sin\left(\tfrac{|\xi|^2 \sqrt{3+4|\xi|^2}t} {2(1+|\xi|^2)}\right)\widehat{g}(\xi)e^{ix\cdot\xi} d\xi\right|\\
& \apprle  t^{-\frac{n}{2}} \|g\|_{L^1(\mathbb{R}^n)} + e^{-\frac {t}{4}} \int_{|\xi|\geq 1}\tfrac{2(1+|\xi|^2)|\xi|^{s+2}|\xi|^{-(s+2)}}{ \sqrt{3+4|\xi|^2}}  |\widehat{g}(\xi)|  d\xi \\
& \apprle  t^{-\frac{n}{2}} \|g\|_{L^1(\mathbb{R}^n)}+ e^{-\frac {t}{4}} \|g\|_{H^{s+2}(\mathbb{R}^n)} \left( \int_{|\xi|\geq 1}\tfrac{4(1+|\xi|^2)^2|\xi|^{-2(s+2)}}{3+4|\xi|^2} d\xi\right)^{1/2} \\
& \apprle  t^{-\frac{n}{2}} \|g\|_{L^1(\mathbb{R}^n)}+ e^{-\frac {t}{4}} \|g\|_{H^{s+2}(\mathbb{R}^n)}.
\end{align*}
which finishes the proof for the operator $S(t)\Delta$ in $L^{\infty}.$\\

The third inequality, for $k=0,$ follows by observing that
\begin{align*}
\|\partial_t S(t) g\|^2_{H^s}& \leq  \int_{\mathbb{R}^n} \left| e^{-\frac{|\xi|^2 t}{2(1+|\xi|^2)}}  (1+|\xi|^2)^{\frac s2} \cos \left(\tfrac{|\xi|^2 \sqrt{3+4|\xi|^2}t} {2(1+|\xi|^2)}\right)\widehat{g}(\xi) \right|^2 d\xi\\\
& + \int_{\mathbb{R}^n} \left| e^{-\frac{|\xi|^2 t}{2(1+|\xi|^2}} \tfrac{(1+|\xi|^2)^{\frac s2}}{ \sqrt{3+4|\xi|^2}} \sin\left(\tfrac{|\xi|^2 \sqrt{3+4|\xi|^2}t} {2(1+|\xi|^2)}\right)  \widehat{g}(\xi) \right|^2 d\xi\\
 &\leq  \int_{\mathbb{R}^n} \left| (1+|\xi|^2)^{\frac s2} \widehat{g}(\xi) \right|^2 d\xi + \int_{\mathbb{R}^n} \left| (1+|\xi|^2)^{\frac{s-1}{2}} \widehat{g}(\xi) \right|^2 d\xi\\
 &  \leq \| g\|^2_{H^s} +  \| g\|^2_{H^{s-1}}  \leq \| g\|^2_{H^s}.
\end{align*}
Therefore,
\[
\|\partial_t S(t) g|_{H^s} \leq C \| g\|_{H^s}.
\]
In a similar way, we obtain the result for $\partial^2_t S(t),$ namely,
\begin{align*}
\| \partial^2_t S(t) g\|^2_{H^{s-1}}& \leq  \int_{\mathbb{R}^n}  \left| e^{-\frac{|\xi|^2 t}{2(1+|\xi|^2)}} \tfrac{|\xi|^2 (1+|\xi|^2)^{\frac{s-1}{2}}}{1+|\xi|^2} \cos \left(\tfrac{|\xi|^2 \sqrt{3+4|\xi|^2}t} {2(1+|\xi|^2)}\right)\widehat{g}(\xi)\right|^2 d\xi\\
& + \int_{\mathbb{R}^n}  \left| e^{-\frac{|\xi|^2 t}{2(1+|\xi|^2}}\tfrac{ |\xi|^2(1+2|\xi|^2)(1+|\xi|^2)^{\frac{s-1}{2}}}{ (1+|\xi|^2)\sqrt{3+4|\xi|^2}} \sin\left(\tfrac{|\xi|^2 \sqrt{3+4|\xi|^2}t} {2(1+|\xi|^2)}\right)\widehat{g}(\xi)\right|^2 d\xi\\
& \leq  \int_{\mathbb{R}^n}  \left| (1+|\xi|^2)^{\frac{s-1}{2}} \widehat{g}(\xi)\right|^2 d\xi + \int_{\mathbb{R}^n}  \left|  (1+|\xi|^2)^{\frac{s}{2}} \widehat{g}(\xi)\right|^2 d\xi\\
& \leq   \| g\|^2_{H^{s-1}}+ \| g\|^2_{H^s}\leq  \| g\|^2_{H^s}.
\end{align*}
Then, we conclude the third inequality for $k=1,$ that is,    
 \[
\|\partial^2_t S(t) g|_{H^s} \leq C \| g\|_{H^s}.
\]
For the operator $S(t)\Delta,$ we have
\begin{align*}
\|S(t)\Delta g\|^2_{H^s}& \leq  \int_{\mathbb{R}^n}  \left| e^{-\frac{|\xi|^2 t}{2(1+|\xi|^2)}}\tfrac{2(1+|\xi|^2) (1+|\xi|^2)^{\frac s2}}{\sqrt{3+4|\xi|^2}} \sin\left(\tfrac{|\xi|^2 \sqrt{3+4|\xi|^2}t} {2(1+|\xi|^2)}\right)\widehat{g}(\xi) \right|^2 d\xi\\
& \leq  C \int_{\mathbb{R}^n}  \left| (1+|\xi|^2)^{\frac {s+1}{2}}  \widehat{g}(\xi) \right|^2 d\xi \leq C   \| g\|^2_{H^{s+1}}.
\end{align*}
Therefore, we obtain the fourth inequality, with $k=0,$ namely,
\[
\|S(t)\Delta g\|_{H^s} \leq   C   \| g\|_{H^{s+1}}.
\]
Finally, for the case $\partial_t S(t)\Delta, $ reasoning similarly as in the proof of the last inequality, we arrive at
\begin{align*}
\|\partial_t S(t)\Delta g\|^2_{H^{s-1}}& \leq \int_{\mathbb{R}^n}  \left|  e^{-\frac{|\xi|^2 t}{2(1+|\xi|^2)}} |\xi|^2 (1+|\xi|^2)^{\frac {s-1}{2}} \cos \left(\tfrac{|\xi|^2 \sqrt{3+4|\xi|^2}t} {2(1+|\xi|^2)}\right)\widehat{g}(\xi) \right|^2 d\xi \\
& +\int_{\mathbb{R}^n}  \left|  e^{-\frac{|\xi|^2 t}{2(1+|\xi|^2}}\tfrac{|\xi|^2  (1+|\xi|^2)^{\frac {s-1}{2}}}{ \sqrt{3+4|\xi|^2}} \sin\left(\tfrac{|\xi|^2 \sqrt{3+4|\xi|^2}t} {2(1+|\xi|^2)}\right)\widehat{g}(\xi) \right|^2  d\xi\\
& \leq \int_{\mathbb{R}^n}  \left|  (1+|\xi|^2)^{\frac {s+1}{2}} \widehat{g}(\xi) \right|^2 d\xi +\int_{\mathbb{R}^n}  \left|   (1+|\xi|^2)^{\frac {s}{2}} \widehat{g}(\xi) \right|^2  d\xi\\
& \leq   \| g\|^2_{H^{s+1}}+ \| g\|^2_{H^s}\leq  \| g\|^2_{H^{s+1}}.
\end{align*}
Then, we arrived at the fourth inequality, for $k=1,$ that is,
\[
\|\partial_t S(t)\Delta g\|_{H^{s-1}}  \leq  C \| g\|_{H^{s+1}},
\]
which finishes the proof of the lemma.
\endproof

The proof of the next corollary follows from Lemma \ref{DatIni1} and arguing as in Lemma \ref{HsL1(1+t)<=1}. We omit the details. 
 \begin{corollary}\label{CoroDat}
Let  $s>\frac{n-2}{2}$, there exists a constant $C=C_n>0$ such that
\[\Vert \partial_t S(t)g \Vert_{L^{\infty}(\mathbb{R}^n)}\leq C (1+t)^{-\frac{n}{2}} \left[ \Vert g\Vert_{L^{1}(\mathbb{R}^n)}+ \Vert g\Vert_{H^{s+1}(\mathbb{R}^n)}\right],\]
\[\Vert S(t) \Delta g \Vert_{L^{\infty}(\mathbb{R}^n)}\leq C (1+t)^{-\frac{n}{2}} \left[\Vert g\Vert_{L^{1}(\mathbb{R}^n)}+\Vert g\Vert_{H^{s+2}(\mathbb{R}^n)}\right],\]
 for all $g\in \mathscr{S}(\mathbb{R}^n)$ and $t> 0.$
\end{corollary}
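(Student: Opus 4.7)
The plan is to mimic exactly the two-regime splitting used in Lemma \ref{HsL1(1+t)<=1}, now combined with the $L^\infty$ estimates already established in Lemma \ref{DatIni1}. I would treat the cases $t>1$ and $0<t\leq 1$ separately, since the rate $t^{-n/2}$ from Lemma \ref{DatIni1} is compatible with $(1+t)^{-n/2}$ only away from $t=0$.

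For $t>1$, the upgrade is immediate. Since $1+t\leq 2t$, one has $t^{-n/2}\leq 2^{n/2}(1+t)^{-n/2}$, and since exponential decay dominates polynomial decay there is a constant $C_n$ with $e^{-t/4}\leq C_n(1+t)^{-n/2}$ for all $t>0$. Applying both inequalities to the bounds of Lemma \ref{DatIni1} yields
\[
\Vert \partial_t S(t)g\Vert_{L^\infty}\leq C(1+t)^{-n/2}\bigl[\Vert g\Vert_{L^1}+\Vert g\Vert_{H^{s+1}}\bigr],
\]
and the analogue for $S(t)\Delta g$, for $t>1$.

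For $0<t\leq 1$ the strategy is to avoid the singular factor $t^{-n/2}$ on the low-frequency piece $|\xi|\leq 1$. For $\partial_t S(t)$, the kernel on $|\xi|\leq 1$ is a product of $\cos$ and $\sin$ factors bounded uniformly by $1$ together with the factor $\tfrac{1}{\sqrt{3+4|\xi|^2}}\leq 1$, so the low-frequency integral is bounded simply by $\|\widehat{g}\|_{L^\infty}\!\int_{|\xi|\leq 1}d\xi\apprle\|g\|_{L^1}$. For $S(t)\Delta$, the same reasoning as in Lemma \ref{HsL1(1+t)<=1} applies: I would use $|\sin x|\leq |x|$ to extract a factor of $t$ that cancels the $|\xi|^2$ in the denominator of the kernel, giving
\[
\left|\int_{|\xi|\leq 1}e^{-\frac{|\xi|^2t}{2(1+|\xi|^2)}}\tfrac{2(1+|\xi|^2)}{\sqrt{3+4|\xi|^2}}\sin\!\left(\tfrac{|\xi|^2\sqrt{3+4|\xi|^2}t}{2(1+|\xi|^2)}\right)\widehat{g}(\xi)e^{ix\cdot\xi}\,d\xi\right|\apprle t\,\|g\|_{L^1}\int_{|\xi|\leq 1}|\xi|^2\,d\xi\apprle t\,\|g\|_{L^1}.
\]
The high-frequency pieces $|\xi|\geq 1$ are handled exactly as in Lemma \ref{DatIni1}: the factor $e^{-|\xi|^2t/(2(1+|\xi|^2))}\leq e^{-t/4}$ pulls out, and Cauchy--Schwarz against weights $|\xi|^{-(s+1)}$ or $|\xi|^{-(s+2)}/\sqrt{3+4|\xi|^2}$ (both integrable on $|\xi|\geq 1$ under $s>\frac{n-2}{2}$) produces an $H^{s+1}$- or $H^{s+2}$-norm with coefficient $e^{-t/4}$.

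Finally, to put the small-$t$ estimate in the correct form, I would apply the elementary inequality $t\leq 1\leq 2^{n/2}(1+t)^{-n/2}$ valid on $0<t\leq 1$ (exactly as in (\ref{gL1(1+t)})), together with $e^{-t/4}\leq C_n(1+t)^{-n/2}$, to rewrite each contribution with the $(1+t)^{-n/2}$ prefactor. Combining the $t>1$ and $0<t\leq 1$ bounds gives the stated inequalities. The only non-automatic step is the $|\sin x|\leq |x|$ trick on the low-frequency $S(t)\Delta$ piece, but this is the same device already employed in Lemma \ref{HsL1(1+t)<=1}, so there is no genuine new obstacle; the proof is essentially a transcription, which is why the authors can safely omit the details.
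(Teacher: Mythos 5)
Your proposal is correct and follows precisely the route the paper intends: the authors explicitly state that the corollary "follows from Lemma \ref{DatIni1} and arguing as in Lemma \ref{HsL1(1+t)<=1}," which is exactly your two-regime splitting with the $t^{-n/2}\leq 2^{n/2}(1+t)^{-n/2}$ and $e^{-t/4}\apprle(1+t)^{-n/2}$ upgrades for $t>1$ and the nonsingular low-frequency bound for $0<t\leq 1$. (One cosmetic remark: in the $S(t)\Delta$ kernel the $|\xi|^{2}$ in the denominator is already cancelled by the Laplacian, so the low-frequency piece is bounded even without $|\sin x|\leq|x|$; your displayed estimate is nonetheless correct.)
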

%%%%%%%%%%%%%%%%%%%%%%%%%%%%%%%%%%%%%%%  THEHA = 1 %%%%%%%%%%%%%%%%%%%%%%%%%%%%%%%
\subsection{Estimates in $H^s_p(\mathbb{R}^n)$}
Previous lemmas are essential to prove existence of global solutions in $H^s(\mathbb{R}^n)$ spaces. On the other hand, to prove the existence of global and local solutions in Bessel potential spaces $H^s_p(\mathbb{R}^n),$ we need to obtain time decay estimates for the solution of the linear problem (\ref{LinEqu}) and for the operator $\Lambda_{\theta}(t),$ which acts on the nonlinear part of the integro-differential equation (\ref{e3}).
\begin{lemma}\label{LamThe<=1}
Let   $\theta \in (\frac{2-n}{2},1]$ if $n=1,2$ and $\theta\in [0,1]$ if $n\geq 3.$ Assume $\sigma<3-n-2\theta$,  $2\leq p \leq  \infty$ and $\frac{1}{p}+\frac{1}{p^{\prime }}=1.$ There exists a constant $C=C_{\sigma,n}>0$ such that
\[\Vert \Lambda_{\theta} (t)g \Vert_{H_p^{\sigma}(\mathbb{R}^n)}\leq C \ t^{1-\theta -\frac n2 (1-\frac 2p)} \Vert g\Vert_{L^{p'}(\mathbb{R}^n)},\]
for all $g\in \mathscr{S}(\mathbb{R}^n)$ and $t> 0.$ Here $\Lambda_{\theta} (t)=S(t)(I-\Delta)^{-1}(-\Delta)^{\theta}.$ 
\end{lemma}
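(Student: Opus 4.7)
The strategy is to reduce the Bessel-potential norm to an $L^q$ estimate of the corresponding Fourier multiplier and then estimate that multiplier by splitting into low- and high-frequency regions. Setting
\[
M(\xi,t) := (1+|\xi|^2)^{\sigma/2}\, e^{-\frac{|\xi|^2 t}{2(1+|\xi|^2)}}\, \tfrac{2|\xi|^{2(\theta-1)}}{\sqrt{3+4|\xi|^2}}\, \sin\!\left(\tfrac{|\xi|^2\sqrt{3+4|\xi|^2}\,t}{2(1+|\xi|^2)}\right),
\]
we have $(I-\Delta)^{\sigma/2}\Lambda_\theta(t)g = \mathcal{F}^{-1}[M(\cdot,t)\widehat{g}]$. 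We will apply the Hausdorff--Young inequality (valid for $2\leq p\leq \infty$), then H\"older's inequality with $\tfrac{1}{p'} = \tfrac{1}{q}+\tfrac{1}{p}$ (so that $\tfrac{1}{q} = 1-\tfrac{2}{p}$), and Hausdorff--Young once more, to obtain
\[
\|\Lambda_\theta(t)g\|_{H^\sigma_p} \leq C\, \|M(\cdot,t)\|_{L^q}\, \|g\|_{L^{p'}}.
\]
It thus suffices to prove $\|M(\cdot,t)\|_{L^q} \apprle t^{1-\theta-\frac{n}{2q}}$, since $\tfrac{n}{2q} = \tfrac{n}{2}(1-\tfrac{2}{p})$.

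For $|\xi|\leq 1$, we use $|\sin x|\leq |x|$, the bound $e^{-|\xi|^2 t/(2(1+|\xi|^2))}\leq e^{-|\xi|^2 t/4}$, and the boundedness of $(1+|\xi|^2)^{\sigma/2-1}$ and $(3+4|\xi|^2)^{-1/2}$ on this region to get
\[
|M(\xi,t)| \apprle t\, e^{-|\xi|^2 t/4}\, |\xi|^{2\theta}.
\]
A rescaling argument analogous to Lemma \ref{AcotGamma}, applied in the $L^q$ setting, then yields $\|M(\cdot,t)\|_{L^q(|\xi|\leq 1)} \apprle t\cdot t^{-\theta-\frac{n}{2q}} = t^{1-\theta-\frac{n}{2q}}$; the integrability near the origin requires $2\theta q>-n$, which is automatic from the hypotheses on $\theta$.

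For $|\xi|\geq 1$, we combine $e^{-|\xi|^2 t/(2(1+|\xi|^2))}\leq e^{-t/4}$ with the interpolated bound $|\sin x|\leq |x|^{\alpha}$ for a parameter $\alpha\in[0,1]$ to be chosen, obtaining
\[
|M(\xi,t)| \apprle e^{-t/4}\, |\xi|^{\sigma+2\theta+\alpha-3}\, t^{\alpha}.
\]
Finiteness of the resulting $L^q$-integral on $|\xi|\geq 1$ requires $\alpha<3-\sigma-2\theta-n/q$, a strictly positive quantity thanks to $\sigma<3-n-2\theta$; to match the small-time rate one also needs $\alpha\geq \max\{0,1-\theta-n/(2q)\}$. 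Compatibility of these two requirements reduces to $\sigma<2-\theta-n/(2q)$, which follows from $\sigma<3-n-2\theta$ exactly when $1-\theta\leq n\bigl(1-\tfrac{1}{2q}\bigr)$; the lower bounds assumed on $\theta$ (namely $\theta>(2-n)/2$ for $n=1,2$ and $\theta\geq 0$ for $n\geq 3$) make this hold uniformly in $q\in[1,\infty]$. With such $\alpha$, the elementary estimate $e^{-t/4}\,t^{\alpha}\apprle t^{1-\theta-n/(2q)}$ (using $t^{\alpha}\leq t^{1-\theta-n/(2q)}$ on $t\leq 1$ and the super-polynomial decay of $e^{-t/4}$ on $t\geq 1$) closes the high-frequency bound. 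The main obstacle is precisely this compatibility analysis for $\alpha$: extracting enough powers of $t$ to match the small-time rate while keeping the $|\xi|$-tail $L^q$-integrable is the step where all the parameter hypotheses on $\theta$, $\sigma$, $p$ and $n$ are jointly used.
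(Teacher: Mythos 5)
Your argument is correct, and it reaches the stated estimate by a route that differs from the paper's in one structural respect. The paper works with the operator $K_{\theta}(t)=\Lambda_{\theta}(t)J^{\sigma}$ and proves only the two endpoint bounds $\Vert K_{\theta}(t)g\Vert_{L^{\infty}}\apprle t^{1-\theta-\frac n2}\Vert g\Vert_{L^{1}}$ (which is exactly the $L^1_\xi$ norm of your multiplier $M$, estimated by the same low/high frequency split and Lemma \ref{AcotGamma}) and $\Vert K_{\theta}(t)g\Vert_{L^{2}}\apprle t^{1-\theta}\Vert g\Vert_{L^{2}}$ (essentially $\Vert M\Vert_{L^\infty_\xi}$, obtained by pulling out a full factor of $t$ via $|\sin x/x|\leq 1$ and recovering $t^{-2\theta}$ from $e^{-|\xi|^2t/2}|\xi|^{4\theta}\apprle t^{-2\theta}$ together with $e^{-t/2}\apprle t^{-2\theta}$ for $t>1$), and then invokes Riesz--Thorin to cover $2\leq p\leq\infty$. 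You instead estimate $\Vert M\Vert_{L^q_\xi}$ directly for the intermediate exponent $\frac1q=1-\frac2p$ and close with Hausdorff--Young and H\"older; this forces the uniform-in-$q$ compatibility analysis for the interpolation parameter $\alpha$ in $|\sin x|\leq|x|^{\alpha}$, which you carry out correctly (and which is indeed where all the hypotheses on $\theta,\sigma,n$ enter). The two approaches buy the same thing: by log-convexity, $\Vert M\Vert_{L^q}\leq\Vert M\Vert_{L^1}^{1/q}\Vert M\Vert_{L^\infty}^{1-1/q}$, so your intermediate symbol bound actually follows from the two endpoint bounds, and the endpoint-plus-interpolation route lets one skip the $\alpha$-bookkeeping entirely; on the other hand, your multiplier formulation is more self-contained in that it never appeals to Riesz--Thorin and makes the role of the exponent $q$ explicit.
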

\proof
Consider $K_{\theta}(t)= \Lambda_{\theta} (t) J^{\sigma},$ where $J^\sigma=(I-\Delta)^{\frac{\sigma}{2}}$ is the Bessel potential operator. By the boundedness of the $\sin(\cdot)$ function and since $  \tfrac{(1+|\xi|^2)^{\frac{\sigma}{2}} }{ \sqrt{3+4|\xi|^2}} \apprle1,$ we conclude that
\begin{align*}
|K_{\theta}(t)g(x)|& \leq  \left| \int_{|\xi|\leq 1}e^{-\frac{|\xi|^2 t}{2(1+|\xi|^2)}}  \tfrac{2(1+|\xi|^2)^{\frac{\sigma}{2}} |\xi|^{2\theta}}{|\xi|^2 \sqrt{3+4|\xi|^2}}  \sin\left(\tfrac{|\xi|^2 \sqrt{3+4|\xi|^2}t} {2(1+|\xi|^2)}\right)\widehat{g}(\xi)e^{ix\cdot\xi} d\xi\right|\\
& +\left| \int_{|\xi|\geq 1}e^{-\frac{|\xi|^2 t}{2(1+|\xi|^2)}}   \tfrac{2(1+|\xi|^2)^{\frac{\sigma}{2}} |\xi|^{2\theta}}{ |\xi|^2 \sqrt{3+4|\xi|^2}} \sin\left(\tfrac{|\xi|^2 \sqrt{3+4|\xi|^2}t} {2(1+|\xi|^2)}\right)\widehat{g}(\xi)e^{ix\cdot\xi} d\xi\right|\\
& \apprle\| \widehat{g}\|_{L^{\infty}}   \int_{|\xi|\leq 1}e^{-\frac{|\xi|^2 t}{2(1+|\xi|^2)}} |\xi|^{2(\theta-1)} d\xi + \|\widehat{g}\|_{L^{\infty}}  \int_{|\xi|\geq 1}e^{-\frac{|\xi|^2 t}{2(1+|\xi|^2)}}   \tfrac{2(1+|\xi|^2)^{\frac{\sigma}{2}} |\xi|^{2\theta}}{ |\xi|^2 \sqrt{3+4|\xi|^2}} d\xi.
\end{align*}
Since  $ \| \widehat{g}\|_{L^{\infty}} \leq    \|g\|_{L^1(\mathbb{R}^n)},$  $e^{-\frac{|\xi|^2 t}{2(1+|\xi|^2)}} \leq e^{- \frac{|\xi|^2t}{4}}$ for $|\xi| \leq 1$ and, $e^{-\frac{|\xi|^2 t}{2(1+|\xi|^2)}} \leq e^{- \frac t4}$ for $|\xi| \geq 1,$  we obtain
\begin{align*}
|K_{\theta}(t)g(x)| & \apprle   \|g\|_{L^1(\mathbb{R}^n)} \int_{|\xi|\leq 1}e^{-\frac{|\xi|^2 t}{4}} |\xi|^{2(\theta-1)} d\xi + e^{-\frac {t}{4}}  \|g\|_{L^1(\mathbb{R}^n)}  \int_{|\xi|\geq 1}  \tfrac{2(1+|\xi|^2)^{\frac{\sigma}{2}} |\xi|^{2\theta}}{ |\xi|^2 \sqrt{3+4|\xi|^2}} d\xi.
\end{align*}
Now, from Lemma \ref{AcotGamma}, spherical coordinates and the assumption on $\sigma$, we have
\begin{align*}
|K_{\theta}(t)g(x)|  & \apprle   t^{-\frac{n+2(\theta-1)}{2}}  \|g\|_{L^1(\mathbb{R}^n)}  + e^{-\frac {t}{4}}  \|g\|_{L^1(\mathbb{R}^n)}  \int_{|\xi|\geq 1}  |\xi|^{\sigma+2\theta-3} d\xi. \\
& \apprle   t^{-\frac{n+2(\theta-1)}{2}}  \|g\|_{L^1(\mathbb{R}^n)}  + e^{-\frac {t}{4}}  \|g\|_{L^1(\mathbb{R}^n)}  \int_1^{\infty}  r^{\sigma+2\theta-3} r^{n-1}dr\\
& \apprle  t^{-\frac{n+2(\theta-1)}{2}}   \|g\|_{L^1(\mathbb{R}^n)} + e^{-\frac {t}{4}} \|g\|_{L^1(\mathbb{R}^n)}.
\end{align*}
Using the last inequality and the fact that there exists a constant $C_{n,\theta}>0,$ that only depends on $n$ and $\theta,$ such that $e^{-\frac t4} \leq C_{n,\theta} t^{-\frac {n+2(\theta-1)}{2}},$ we conclude that 
\begin{equation}\label{KL1}
\|K_{\theta}(t)g\|_{L^{\infty}}   \apprle  t^{-\frac{n+2(\theta-1)}{2}}   \|g\|_{L^1(\mathbb{R}^n)} .
\end{equation}
Now, we prove that  $K_{\theta}(t):L^2(\mathbb{R}^n)\longrightarrow L^2(\mathbb{R}^n)$ is continuous. We consider two cases:
\begin{itemize}
\item Case 1: Assume that $t>1.$  Since $\left|  \tfrac{2(1+|\xi|^2)}{t|\xi|^2 \sqrt{3+4|\xi|^2}}  \sin\left(\tfrac{|\xi|^2 \sqrt{3+4|\xi|^2}t} {2(1+|\xi|^2)}\right)\right| \apprle 1$ for all $t>0$ and $\xi \in \mathbb{R}^n,$  we have
\begin{align*}
\|K_{\theta}(t)g\|_{L^2(\mathbb{R}^n)}^2& \leq  t^2 \int_{\mathbb{R}^n}  e^{-\frac{|\xi|^2 t}{1+|\xi|^2}} \left|  \tfrac{2 (1+|\xi|^2) (1+|\xi|^2)^{\frac{\sigma-2}{2}} |\xi|^{2\theta}} {t|\xi|^2 \sqrt{3+4|\xi|^2}}  
\sin\left(\tfrac{|\xi|^2 \sqrt{3+4|\xi|^2}t} {2(1+|\xi|^2)}\right)\right|^2  |\widehat{g}(\xi)|^2d\xi \\
& \leq  t^2 \int_{\mathbb{R}^n}e^{-\frac{|\xi|^2 t}{1+|\xi|^2}}  (1+|\xi|^2)^{\sigma-2} |\xi|^{4\theta} |\widehat{g}(\xi)|^2d\xi=I_1+I_2,
\end{align*}
where 
\[ I_1 =  t^2 \int_{|\xi|\leq 1}e^{-\frac{|\xi|^2 t}{1+|\xi|^2}}  (1+|\xi|^2)^{\sigma-2} |\xi|^{4\theta} |\widehat{g}(\xi)|^2d\xi \]
and 
\[ I_2=  t^2 \int_{|\xi|\geq 1}e^{-\frac{|\xi|^2 t}{1+|\xi|^2}}  (1+|\xi|^2)^{\sigma-2} |\xi|^{4\theta} |\widehat{g}(\xi)|^2d\xi. \]
Since  $e^{-\frac{|\xi|^2 t}{1+|\xi|^2}} \leq e^{-\frac{|\xi|^2 t}{2}}$  and $(1+|\xi|^2)^{\sigma-2}\leq \max\{2^{\sigma-2},1\}$ for $|\xi|\leq 1,$ we obtain  
\[ I_1 \leq  C_{\sigma} t^2 \int_{|\xi|\leq 1}e^{-\frac{|\xi|^2 t}{2}}  |\xi|^{4\theta} |\widehat{g}(\xi)|^2d\xi.\]
Now, note that $e^{-\frac{|\xi|^2 t}{2}}  |\xi|^{4\theta}=4^{\theta} t^{-2\theta} e^{-\left| \frac{\xi \sqrt{t}}{\sqrt{2}}\right|^2}  \left| \frac{\xi \sqrt{t}}{\sqrt{2}}\right|^{4\theta}\leq 4^{\theta} t^{-2\theta}.$ Then, we can conclude that
\begin{equation}\label{I_1impor}
I_1\leq C_{\sigma,\theta} \ t^{2(1-\theta)}   \|g\|^2_{L^2(\mathbb{R}^n)}.
\end{equation}
Next, since $e^{-\frac{|\xi|^2 t}{1+|\xi|^2}} \leq e^{-\frac{t}{2}}$  for $|\xi|\geq 1,$ we get
\begin{equation}\label{I_2}
I_2 \leq  t^2 e^{-\frac t2} \int_{|\xi|\geq 1}  (1+|\xi|^2)^{\sigma-2} |\xi|^{4\theta}  |\widehat{g}(\xi)|^2d\xi.
\end{equation}
The restriction $\sigma<3-n-2\theta$, implies $\sigma<2(1-\theta)$ and therefore $  (1+|\xi|^2)^{\sigma-2} |\xi|^{4\theta} \leq 1.$ From (\ref{I_2}) we arrive at
\[
I_2 \leq  t^2 e^{-\frac t2}    \|g\|^2_{L^2(\mathbb{R}^n)}.
\]
Using the fact that $t>1,$ there exists a constant $C_{\theta}>0$ such that $e^{-\frac t2}\leq C_{\theta} t^{-2\theta}.$ Therefore,
\begin{equation}\label{I_2impor}
I_1\leq C_{\sigma,\theta} \ t^{2(1-\theta)}   \|g\|^2_{L^2(\mathbb{R}^n)}.
\end{equation}
Combining (\ref{I_1impor}) and (\ref{I_2impor}) and taking square root, we have
\begin{equation*}
\|K_{\theta}(t)g\|_{L^2(\mathbb{R}^n)}  \apprle t^{1-\theta}  \|g\|_{L^2(\mathbb{R}^n)}. 
\end{equation*}
\item Case 2: Consider $0<t\leq 1.$ Since $\left|  \tfrac{2(1+|\xi|^2)}{t|\xi|^2 \sqrt{3+4|\xi|^2}}  \sin\left(\tfrac{|\xi|^2 \sqrt{3+4|\xi|^2}t} {2(1+|\xi|^2)}\right)\right| \apprle 1, $  
$(1+|\xi|^2)^{\sigma-2} |\xi|^{4\theta} \leq 1$ and $e^{-\frac{|\xi|^2 t}{1+|\xi|^2}} \leq 1$   for all $\xi \in \mathbb{R}^n,$ we obtain
\begin{align*}
\|K_{\theta}(t)g\|_{L^2(\mathbb{R}^n)}^2& \leq  t^2 \int_{\mathbb{R}^n}  e^{-\frac{|\xi|^2 t}{1+|\xi|^2}} \left|  \tfrac{2 (1+|\xi|^2) (1+|\xi|^2)^{\frac{\sigma-2}{2}} |\xi|^{2\theta}} {t|\xi|^2 \sqrt{3+4|\xi|^2}}  
\sin\left(\tfrac{|\xi|^2 \sqrt{3+4|\xi|^2}t} {2(1+|\xi|^2)}\right)\right|^2  |\widehat{g}(\xi)|^2d\xi \\
& \leq  t^2 \int_{\mathbb{R}^n} |\widehat{g}(\xi)|^2d\xi=t^{2(1-\theta)}t^{2\theta} \|g\|^2_{L^2(\mathbb{R}^n)}.
\end{align*}
Since $0<t\leq 1$ and $\theta \geq 0,$ we have $t^{2\theta}\leq 1.$ Then, from last inequality, we arrive at 
\begin{equation}\label{KL2}
\|K_{\theta}(t)g\|_{L^2(\mathbb{R}^n)}  \apprle t^{1-\theta}  \|g\|_{L^2(\mathbb{R}^n)}. 
\end{equation}
In any case, we have that $K_{\theta}(t)$ is a bounded operator from $L^2(\mathbb{R}^n)$ to $L^2(\mathbb{R}^n).$  
\end{itemize}
Therefore, recalling that $K_{\theta}(t)= \Lambda_{\theta}(t)J^\sigma,$ from (\ref{KL1}), (\ref{KL2}) and applying the Riesz-Thorin interpolation theorem, we conclude the proof of the lemma. 
\endproof

The next corollary will be useful to estimate the nonlinear part of equation (\ref{e3}) (see Proposition \ref{nl2g}).
\begin{corollary}\label{Coro1}
Let   $s\in \mathbb{R},$  $\theta \in (\frac{2-n}{2},1]$ if $n=1,2$ and $\theta\in [0,1]$ if $n\geq 3.$  There exists $C=C_{n}>0$ such that
\[ \Vert \Lambda_{\theta} (t) g  \Vert_{H^s(\mathbb{R}^n)}\leq C \ t^{1-\theta} \Vert g\Vert_{H^s(\mathbb{R}^n)},\]
for all $g\in \mathscr{S}(\mathbb{R}^n)$ and $t> 0.$ 
\end{corollary}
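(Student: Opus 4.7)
The plan is to prove the bound directly via Plancherel's theorem, reducing the inequality to a uniform pointwise estimate on the Fourier symbol of $\Lambda_{\theta}(t)$. Since $\Lambda_{\theta}(t)$ is a Fourier multiplier and therefore commutes with $(I-\Delta)^{s/2}$, we can write
\begin{equation*}
\Vert \Lambda_{\theta}(t)g\Vert_{H^s}^2 = \int_{\mathbb{R}^n}|m(\xi,t)|^2(1+|\xi|^2)^s|\widehat{g}(\xi)|^2\,d\xi,
\end{equation*}
where
\begin{equation*}
m(\xi,t) = e^{-\frac{|\xi|^2 t}{2(1+|\xi|^2)}}\,\frac{2|\xi|^{2\theta}}{|\xi|^2\sqrt{3+4|\xi|^2}}\,\sin\!\left(\frac{|\xi|^2\sqrt{3+4|\xi|^2}\,t}{2(1+|\xi|^2)}\right).
\end{equation*}
Hence it suffices to establish $\sup_{\xi\in\mathbb{R}^n}|m(\xi,t)|\leq C\,t^{1-\theta}$; pulling the constant out of the integral then gives the stated inequality.

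The heart of the argument is the pointwise symbol bound. Using $|\sin x|\leq|x|$, I would first extract a factor of $t$ to obtain
\begin{equation*}
|m(\xi,t)|\leq t\cdot\frac{|\xi|^{2\theta}}{1+|\xi|^2}\,e^{-\frac{|\xi|^2 t}{2(1+|\xi|^2)}},
\end{equation*}
and then show that the remaining factor is controlled by $Ct^{-\theta}$ uniformly. I would split into two regions. On $\{|\xi|\leq 1\}$, use $\frac{|\xi|^{2\theta}}{1+|\xi|^2}\leq|\xi|^{2\theta}$ together with $e^{-\frac{|\xi|^2 t}{2(1+|\xi|^2)}}\leq e^{-|\xi|^2 t/4}$; after the substitution $u=|\xi|\sqrt{t}/2$, the factor $|\xi|^{2\theta}e^{-|\xi|^2 t/4}=4^{\theta}t^{-\theta}u^{2\theta}e^{-u^2}$ is bounded by a multiple of $t^{-\theta}$ because $u\mapsto u^{2\theta}e^{-u^2}$ is globally bounded. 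On $\{|\xi|\geq 1\}$, since $\theta\leq 1$, one has $\frac{|\xi|^{2\theta}}{1+|\xi|^2}\leq|\xi|^{2(\theta-1)}\leq 1$, and $e^{-\frac{|\xi|^2 t}{2(1+|\xi|^2)}}\leq e^{-t/4}$; the bound $e^{-t/4}\leq C_{\theta}t^{-\theta}$ for all $t>0$ follows from the fact that $t\mapsto t^{\theta}e^{-t/4}$ is bounded on $(0,\infty)$.

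The only subtle point is ensuring the estimate $e^{-t/4}\leq C t^{-\theta}$ is uniform across the allowed range of $\theta$: for $\theta=0$ it reduces to the trivial $e^{-t/4}\leq 1$, and for $\theta>0$ (which is forced when $n=1,2$ by the hypothesis $\theta>\tfrac{2-n}{2}$) the maximum of $t^{\theta}e^{-t/4}$ is finite and depends continuously on $\theta$, so a uniform constant can be chosen on the compact parameter set. Since $\theta$ ranges over a bounded interval, all auxiliary constants remain finite. Assembling the estimates from both regions yields $|m(\xi,t)|\leq C\,t^{1-\theta}$ uniformly in $\xi$, and Plancherel then delivers the corollary. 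The main obstacle is really just bookkeeping in the case split; no deeper tool beyond Plancherel is needed.
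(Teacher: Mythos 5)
Your proof is correct and follows essentially the same route as the paper: both reduce the claim to an $L^2$ multiplier bound (the paper via the $L^2\to L^2$ estimate for $\Lambda_\theta(t)$ from the second part of Lemma \ref{LamThe<=1} with $\sigma=0$, followed by commuting with $J^s$; you via Plancherel and a pointwise bound on the symbol, which is the same statement), using $|\sin x|\le|x|$ to extract the factor $t$, the Gaussian rescaling to get $t^{-\theta}$ at low frequencies, and $e^{-t/4}\lesssim t^{-\theta}$ at high frequencies. Your frequency split handles all $t>0$ uniformly and so avoids the paper's separate $t>1$ and $0<t\le1$ cases, but this is only a cosmetic streamlining.
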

\proof
Repeating the second part of the proof of Lemma \ref{LamThe<=1}, with $\sigma=0,$ we obtain (\ref{I_2}), that is,
\begin{equation*}
I_2 \leq  t^2 e^{-\frac t2} \int_{|\xi|\leq 1}  (1+|\xi|^2)^{-2} |\xi|^{4\theta}  |\widehat{g}(\xi)|^2d\xi.
\end{equation*}
Since $\frac{2-n}{2} <\theta \leq 1,$ we have $ (1+|\xi|^2)^{-2} |\xi|^{4\theta} \leq 1.$ Therefore  $I_2 \leq C_{\theta} \ t^{2(1-\theta)}   \|g\|^2_{L^2(\mathbb{R}^n)}.$ Following the rest of the proof of Lemma  \ref{LamThe<=1}, we arrive at 
\[
\|\Lambda_{\theta}(t)g\|_{L^2(\mathbb{R}^n)}  \apprle t^{1-\theta}  \|g\|_{L^2(\mathbb{R}^n)}. 
\]
From the last inequality, we have
\[
\|\Lambda_{\theta}(t)g\|_{H^s(\mathbb{R}^n)}  =\|\Lambda_{\theta}(t) J^sg\|_{L^2(\mathbb{R}^n)}  \apprle t^{1-\theta}  \|J^s g\|_{L^2(\mathbb{R}^n)}=C_n t^{1-\theta}  \|g\|_{H^s(\mathbb{R}^n)}, 
\]
which finishes the proof of the corollary.
\endproof
%%%%%%%%%%%%%%%%%%%%%%%%%%%% FIN PRUEBA COROLLARY %%%%%%%%%%%%%%%%%%%%%%%%%%%%%%%%%%%%%%
\begin{lemma}\label{DtLamThe<=1}
Let   $0\leq \theta \leq 1,$ $\sigma<3-n-2\theta$,  $2\leq p \leq  \infty$ and $\frac{1}{p}+\frac{1}{p^{\prime }}=1.$ There exists $C=C_{\sigma, n}>0$ such that
\[\Vert \partial_t \Lambda_{\theta} (t)g \Vert_{H_p^{\sigma-1}(\mathbb{R}^n)}\leq C \ t^{-\frac n2(1-\frac 2p)} \Vert g\Vert_{L^{p'}(\mathbb{R}^n)},\]
for all $g\in \mathscr{S}(\mathbb{R}^n)$ and $t> 0.$ Here $\Lambda_{\theta}(t)=S(t)(I-\Delta)^{-1}(-\Delta)^{\theta}.$   
\end{lemma}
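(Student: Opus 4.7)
The plan follows the template of Lemma \ref{LamThe<=1}: interpolate an $L^1\to L^\infty$ endpoint against an $L^2\to L^2$ endpoint via Riesz--Thorin, using a Bessel potential to absorb the $H_p^{\sigma-1}$ scale. Concretely, I would set
\[
K_\theta(t) := \partial_t\Lambda_\theta(t)\, J^{\sigma-1}, \qquad J^{\sigma-1}=(I-\Delta)^{(\sigma-1)/2}.
\]
Since all operators involved are Fourier multipliers they commute, hence $\|\partial_t\Lambda_\theta(t)g\|_{H_p^{\sigma-1}}=\|K_\theta(t)g\|_{L^p}$ and it suffices to bound $K_\theta(t)\colon L^{p'}\to L^p$. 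Its Fourier symbol is
\[
m(\xi,t)=\frac{|\xi|^{2\theta}}{(1+|\xi|^2)^{(3-\sigma)/2}}\,e^{-\frac{|\xi|^2 t}{2(1+|\xi|^2)}}\Bigl[\cos(t\phi(\xi))-\tfrac{1}{\sqrt{3+4|\xi|^2}}\sin(t\phi(\xi))\Bigr].
\]

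For the $L^1\to L^\infty$ endpoint, I would use $\|\widehat{g}\|_{L^\infty}\leq\|g\|_{L^1}$ to reduce matters to estimating $\int_{\mathbb{R}^n}|m(\xi,t)|\,d\xi$, and split at $|\xi|=1$. On low frequencies the rational factor is bounded, $|\xi|^{2\theta}\leq 1$ since $\theta\geq 0$, and $e^{-|\xi|^2 t/(2(1+|\xi|^2))}\leq e^{-|\xi|^2 t/4}$, so Lemma \ref{AcotGamma} delivers the decay $t^{-n/2}$. On high frequencies $e^{-|\xi|^2 t/(2(1+|\xi|^2))}\leq e^{-t/4}$ and the remaining integral converges because the hypothesis $\sigma<3-n-2\theta$ makes $|\xi|^{2\theta+\sigma-3}$ integrable at infinity. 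The two contributions combine through the elementary bound $e^{-t/4}\lesssim t^{-n/2}$ on $(0,\infty)$ (since $t^{n/2}e^{-t/4}$ is bounded), producing $\|K_\theta(t)g\|_{L^\infty}\leq C\,t^{-n/2}\|g\|_{L^1}$.

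For the $L^2\to L^2$ endpoint, Plancherel reduces the statement to the uniform $L^\infty$-boundedness of $m(\xi,t)$. The bracket is bounded by $2$ and the exponential by $1$; for the remaining multiplier $|\xi|^{2\theta}(1+|\xi|^2)^{(\sigma-3)/2}$ the case $|\xi|\leq 1$ is handled by $\theta\geq 0$, while on $|\xi|\geq 1$ the inequality $\sigma<3-n-2\theta\leq 3-2\theta$ (valid for $n\geq 1$) forces the exponent $2\theta+\sigma-3$ to be nonpositive, so the factor is uniformly bounded. This yields $\|K_\theta(t)g\|_{L^2}\leq C\|g\|_{L^2}$.

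The Riesz--Thorin theorem then interpolates the pair $(L^1\to L^\infty,\,t^{-n/2})$ and $(L^2\to L^2,\,\mathrm{const})$ with parameter $2/p$ to give $\|K_\theta(t)g\|_{L^p}\leq C\,t^{-(n/2)(1-2/p)}\|g\|_{L^{p'}}$ for every $2\leq p\leq\infty$, which is the claim. I expect the main subtlety to be the dual use of the sharp hypothesis $\sigma<3-n-2\theta$: once to secure integrability of $|\xi|^{2\theta+\sigma-3}$ at infinity for the $L^\infty$-endpoint, and once (more weakly) to secure its boundedness for the $L^2$-endpoint; the remaining bookkeeping parallels Lemma \ref{LamThe<=1}.
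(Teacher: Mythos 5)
Your proposal is correct and follows essentially the same route as the paper: the paper likewise sets $M_\theta(t)=\partial_t\Lambda_\theta(t)J^{\sigma-1}$, proves the $L^1\to L^\infty$ bound $t^{-n/2}$ by splitting at $|\xi|=1$ (low frequencies via Lemma \ref{AcotGamma}, high frequencies via $e^{-t/4}$ and the integrability of $|\xi|^{\sigma+2\theta-3}$ guaranteed by $\sigma<3-n-2\theta$), proves uniform $L^2\to L^2$ boundedness from the boundedness of the multiplier, and concludes by Riesz--Thorin. The only cosmetic difference is that you bound the cosine and sine contributions together through a single symbol estimate, whereas the paper treats them as separate integrals.
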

\proof
Consider $M_{\theta}(t)=\partial_t \Lambda_{\theta} (t) J^{\sigma-1},$ where $J^{s}=(I-\Delta)^{s}$ is the Bessel potential operator.  Similarly as the proof of Lemma \ref{LamThe<=1}, we have
\begin{align*}
|M_{\theta}(t)g(x)|& \leq \left| \int_{|\xi|\leq 1}e^{-\frac{|\xi|^2 t}{2(1+|\xi|^2)}} \frac{|\xi|^{2\theta}(1+|\xi|^2)^{\frac{\sigma-1}{2}}}{1+|\xi|^2} \cos \left(\tfrac{|\xi|^2 \sqrt{3+4|\xi|^2}t} {2(1+|\xi|^2)}\right)\widehat{g}(\xi)e^{ix\cdot\xi} d\xi\right|\\
&+\left| \int_{|\xi|\geq 1}e^{-\frac{|\xi|^2 t}{2(1+|\xi|^2)}} \frac{|\xi|^{2\theta}(1+|\xi|^2)^{\frac{\sigma-1}{2}}}{1+|\xi|^2} \cos \left(\tfrac{|\xi|^2 \sqrt{3+4|\xi|^2}t} {2(1+|\xi|^2)}\right)\widehat{g}(\xi)e^{ix\cdot\xi} d\xi\right|\\
& +\left| \int_{|\xi|\leq  1}e^{-\frac{|\xi|^2 t}{2(1+|\xi|^2}}  \tfrac{|\xi|^{2\theta}(1+|\xi|^2)^{\frac{\sigma-1}{2}}}{(1+|\xi|^2)\sqrt{3+4|\xi|^2}}  \sin\left(\tfrac{|\xi|^2 \sqrt{3+4|\xi|^2}t} {2(1+|\xi|^2)}\right)\widehat{g}(\xi)e^{ix\cdot\xi} d\xi\right|\\
&+\left| \int_{|\xi|\geq 1}e^{-\frac{|\xi|^2 t}{2(1+|\xi|^2)}} \tfrac{|\xi|^{2\theta}(1+|\xi|^2)^{\frac{\sigma-1}{2}}}{(1+|\xi|^2)\sqrt{3+4|\xi|^2}} \sin\left(\tfrac{|\xi|^2 \sqrt{3+4|\xi|^2}t} {2(1+|\xi|^2)}\right)\widehat{g}(\xi)e^{ix\cdot\xi} d\xi\right|\\
& \apprle t^{-\frac n2} \|g\|_{L^1(\mathbb{R}^n)}  + e^{-\frac{t}{4}} \|g\|_{L^1(\mathbb{R}^n)} \int_{|\xi|\geq 1} |\xi|^{\sigma-3+2\theta} d\xi + e^{-\frac{t}{4}} \|g\|_{L^1(\mathbb{R}^n)} \int_{|\xi|\geq 1} |\xi|^{\sigma-4+2\theta} d\xi\\
& \apprle t^{-\frac n2} \|g\|_{L^1(\mathbb{R}^n)}  + e^{-\frac{t}{4}} \|g\|_{L^1(\mathbb{R}^n)} \int_{1}^{\infty} r^{\sigma+2\theta+n-4} dr + e^{-\frac{t}{4}} \|g\|_{L^1(\mathbb{R}^n)} \int_{1}^{\infty} r^{\sigma+2\theta+n-5} dr \\
& \apprle (t^{-\frac n2} +e^{-\frac{t}{4}})\|g\|_{L^1(\mathbb{R}^n)}  \apprle t^{-\frac n2} \|g\|_{L^1(\mathbb{R}^n)}.
\end{align*}
Therefore
\begin{equation}\label{KTheta1}
\|M_{\theta}(t)g\|_{L^{\infty}(\mathbb{R}^n)}  \apprle t^{-\frac n2} \|g\|_{L^1(\mathbb{R}^n)}.
\end{equation}
Now, we prove that  $M_{\theta}(t):L^2(\mathbb{R}^n)\longrightarrow L^2(\mathbb{R}^n)$ is continuous. Indeed, since $\sigma-1+2(\theta-1)<-n\leq -1,$ we obtain
$\tfrac{|\xi|^{2\theta}(1+|\xi|^2)^{\frac{\sigma-1}{2}}}{1+|\xi|^2} \apprle 1$ and $\tfrac{|\xi|^{2\theta}(1+|\xi|^2)^{\frac{\sigma-1}{2}}}{(1+|\xi|^2)\sqrt{3+4|\xi|^2}}\apprle 1.$ From the boundedness of the functions $\cos(\cdot),$ $\sin(\cdot),$ $e^{-\frac{|\xi|^2 t}{1+|\xi|^2}}$, we conclude that 
\begin{align*}
\|M_{\theta}(t)g\|_{L^2(\mathbb{R}^n)}^2& \leq \int_{\mathbb{R}^n}e^{-\frac{|\xi|^2 t}{1+|\xi|^2}} \left| \frac{|\xi|^{2\theta}(1+|\xi|^2)^{\frac{\sigma-1}{2}}}{1+|\xi|^2} \cos \left(\tfrac{|\xi|^2 \sqrt{3+4|\xi|^2}t} {2(1+|\xi|^2)}\right) \right|^2  |\widehat{g}(\xi)|^2d\xi \\
& +\int_{\mathbb{R}^n}e^{-\frac{|\xi|^2 t}{1+|\xi|^2}} \left| \tfrac{|\xi|^{2\theta}(1+|\xi|^2)^{\frac{\sigma-1}{2}}}{(1+|\xi|^2)\sqrt{3+4|\xi|^2}} \sin \left(\tfrac{|\xi|^2 \sqrt{3+4|\xi|^2}t} {2(1+|\xi|^2)}\right) \right|^2  |\widehat{g}(\xi)|^2d\xi \\ 
&\leq C \|g\|^2_{L^2(\mathbb{R}^n)} . 
\end{align*}
Taking square root, we have
\begin{equation}\label{KTheta2}
\|M_{\theta}(t)g\|_{L^2(\mathbb{R}^n)}  \apprle \|g\|_{L^2(\mathbb{R}^n)}. 
\end{equation}
Then, recalling that $M_{\theta}(t)= \partial_t \Lambda(t)J^{\sigma-1},$ from (\ref{KTheta1}) and (\ref{KTheta2})  applying the Riesz-Thorin interpolation theorem, we conclude the proof of the lemma. 
\endproof
%%%%%%%%%%%%%%%%%%%%%%%%%%%% COROLARIO 2 %%%%%%%%%%%%%%%%%%%%%%%%%%%%%%%%%%%%%%%%%%%%%%%%%%%%%

As before, the next corollary will be useful to estimate the nonlinear part of equation (\ref{e3}) (see Proposition \ref{nl2g}).
\begin{corollary}\label{Coro2}
Let   $0 \leq \theta \leq 1$ and $s\in \mathbb{R}.$ There exists $C=C_{n}>0$ such that
\[ \Vert \partial_t \Lambda_{\theta} (t) g  \Vert_{H^{s-1}(\mathbb{R}^n)}\leq C \ \Vert g\Vert_{H^{s-1}(\mathbb{R}^n)},\]
for all $g\in \mathscr{S}(\mathbb{R}^n)$ and $t> 0.$ 
\end{corollary}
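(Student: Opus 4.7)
The plan is to mirror the strategy of Corollary \ref{Coro1}: first establish an $L^2\to L^2$ bound for $\partial_t\Lambda_\theta(t)$ by directly computing its Fourier symbol, and then upgrade it to $H^{s-1}$ by commuting with the Bessel potential operator $J^{s-1}$. Unlike the proof of Lemma \ref{DtLamThe<=1}, I will not need Riesz--Thorin interpolation here, since no $L^{p'}\to L^p$ transition is involved and the desired bound is independent of $t$.

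First, I would write $\partial_t\Lambda_\theta(t)$ as a Fourier multiplier by differentiating in $t$ the symbol of $\Lambda_\theta(t)$ that is implicit in the formula for $S(t)$ given in Section 2. A short calculation produces the symbol
\[
m_\theta(\xi,t)=\frac{|\xi|^{2\theta}}{1+|\xi|^2}\,e^{-\frac{|\xi|^2 t}{2(1+|\xi|^2)}}\left[\cos\!\left(\tfrac{|\xi|^2\sqrt{3+4|\xi|^2}\,t}{2(1+|\xi|^2)}\right)-\tfrac{1}{\sqrt{3+4|\xi|^2}}\sin\!\left(\tfrac{|\xi|^2\sqrt{3+4|\xi|^2}\,t}{2(1+|\xi|^2)}\right)\right],
\]
in which the exponential, the two trigonometric factors, and $1/\sqrt{3+4|\xi|^2}$ are bounded by absolute constants uniformly in $\xi\in\mathbb{R}^n$ and $t>0$.

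Second, I would verify that $|\xi|^{2\theta}/(1+|\xi|^2)\leq 1$ under the hypothesis $0\leq\theta\leq 1$ by a one-line case-split: on $\{|\xi|\leq 1\}$ we have $|\xi|^{2\theta}\leq 1$ and $1+|\xi|^2\geq 1$, while on $\{|\xi|\geq 1\}$ we have $|\xi|^{2\theta}/(1+|\xi|^2)\leq |\xi|^{2(\theta-1)}\leq 1$ since $\theta\leq 1$. Combining this with the first step yields $|m_\theta(\xi,t)|\leq C_n$ uniformly, and Plancherel's theorem gives
\[
\|\partial_t\Lambda_\theta(t)g\|_{L^2(\mathbb{R}^n)}\leq C_n\,\|g\|_{L^2(\mathbb{R}^n)}
\]
with the constant independent of $t$.

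Finally, since $\partial_t\Lambda_\theta(t)$ and $J^{s-1}$ are both Fourier multipliers they commute, so applying the previous estimate to $J^{s-1}g$ in place of $g$ yields
\[
\|\partial_t\Lambda_\theta(t)g\|_{H^{s-1}(\mathbb{R}^n)}=\|\partial_t\Lambda_\theta(t)J^{s-1}g\|_{L^2(\mathbb{R}^n)}\leq C_n\,\|J^{s-1}g\|_{L^2(\mathbb{R}^n)}=C_n\,\|g\|_{H^{s-1}(\mathbb{R}^n)},
\]
which is the desired inequality. No real obstacle arises: the low-frequency singularity $|\xi|^{2(\theta-1)}$ present in the symbol of $\Lambda_\theta(t)$ itself (which in Corollary \ref{Coro1} forced the extra restriction $\theta>(2-n)/2$ in dimensions $n=1,2$) is cancelled by the extra factor $|\xi|^2/(1+|\xi|^2)$ introduced by the time derivative, so the full range $0\leq\theta\leq 1$ is admissible independently of $n$.
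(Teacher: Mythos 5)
Your proposal is correct and follows essentially the same route as the paper: the paper's proof likewise reduces to the uniform $L^2$ bound from the second part of Lemma \ref{DtLamThe<=1} with $\sigma=1$, using exactly the observation that $\tfrac{|\xi|^{2\theta}}{1+|\xi|^2}\leq 1$ and $\tfrac{|\xi|^{2\theta}}{(1+|\xi|^2)\sqrt{3+4|\xi|^2}}\leq 1$ for $0\leq\theta\leq 1$ together with the boundedness of the exponential and trigonometric factors, and then commutes with $J^{s-1}$ via Plancherel. Your closing remark about the time derivative's extra factor cancelling the low-frequency singularity (and hence removing the restriction $\theta>(2-n)/2$ needed in Corollary \ref{Coro1}) is an accurate reading of why the hypotheses differ.
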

\proof
Repeating the second part of the proof of Lemma \ref{DtLamThe<=1}, with $\sigma=1,$ we obtain
\begin{align*}
\|\partial_t \Lambda_{\theta}(t)g\|_{L^2(\mathbb{R}^n)}^2& \leq \int_{\mathbb{R}^n}e^{-\frac{|\xi|^2 t}{1+|\xi|^2}} \left| \frac{|\xi|^{2\theta}}{1+|\xi|^2} \cos \left(\tfrac{|\xi|^2 \sqrt{3+4|\xi|^2}t} {2(1+|\xi|^2)}\right) \right|^2  |\widehat{g}(\xi)|^2d\xi \\
& +\int_{\mathbb{R}^n}e^{-\frac{|\xi|^2 t}{1+|\xi|^2}} \left| \tfrac{|\xi|^{2\theta}}{(1+|\xi|^2)\sqrt{3+4|\xi|^2}} \sin \left(\tfrac{|\xi|^2 \sqrt{3+4|\xi|^2}t} {2(1+|\xi|^2)}\right) \right|^2  |\widehat{g}(\xi)|^2d\xi.
\end{align*}
Since $0\leq \theta \leq 1,$ we have $ \frac{|\xi|^{2\theta}}{1+|\xi|^2} \leq 1$ and $\tfrac{|\xi|^{2\theta}}{(1+|\xi|^2)\sqrt{3+4|\xi|^2}} \leq 1.$ Using the facts that $e^{-\frac{|\xi|^2 t}{1+|\xi|^2}} \leq 1$ and since $\cos(\cdot), \sin(\cdot) $ are bounded functions, we arrive at 
\[
\|\partial_t \Lambda_{\theta}(t)g\|_{L^2(\mathbb{R}^n)}  \leq C \|g\|_{L^2(\mathbb{R}^n)}. 
\]
From the last inequality, we have
\[
\|\partial_t \Lambda_{\theta}(t)g\|_{H^{s-1}(\mathbb{R}^n)}  =\|\partial_t \Lambda_{\theta}(t) J^{s-1}g\|_{L^2(\mathbb{R}^n)}  \apprle   \|J^{s-1} g\|_{L^2(\mathbb{R}^n)}=C_n   \|g\|_{H^{s-1}(\mathbb{R}^n)}, 
\]
which finishes the proof of the corollary.
\endproof

Next lemma is useful to bound the linear part in the proof of Theorem \ref{teo_local_1}.
%%%%%%%%%%%%%%%%%%%%%%%%%%%%%%%%%%%%%%%  RESULTADOS PARA PARTIAL DE T THETA<1  %%%%%%%%%%%%%%%%%%%%%%%%%%%%%%%
\begin{lemma}\label{DatIni2}
Let   $\sigma<1-n$, $2\leq p \leq  \infty$ and $\frac{1}{p}+\frac{1}{p^{\prime }}=1.$ There exists a constant $C=C_{\sigma}>0,$ such that
\begin{eqnarray}
\Vert \partial^k_t S(t)g \Vert_{H_p^{\sigma-k}(\mathbb{R}^n)}\leq C t^{-\frac n 2(1-\frac 2p)}\Vert g \Vert_{L^{p'}(\mathbb{R}^n)},\label{p1}\\
\Vert \partial^{k-1}_t S(t) \Delta g \Vert_{H_p^{\sigma-k+1}(\mathbb{R}^n)}\leq C t^{-\frac n 2(1-\frac 2p)}\Vert g \Vert_{H^2_{p'}(\mathbb{R}^n)},\label{p2} 
\end{eqnarray}
for all $g\in \mathscr{S}(\mathbb{R}^n),$ $k=1,2$ and $t> 0.$  
\end{lemma}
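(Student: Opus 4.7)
\proof
My plan is to mimic the argument used in Lemmas \ref{LamThe<=1} and \ref{DtLamThe<=1}: for each of the two inequalities I will verify the endpoint estimates $L^1\to L^\infty$ (with rate $t^{-n/2}$) and $L^2\to L^2$ (uniformly in $t$), and then invoke the Riesz--Thorin interpolation theorem to obtain the $L^{p'}\to L^p$ bound with the intermediate rate $t^{-\frac{n}{2}(1-\frac{2}{p})}$. For (\ref{p1}) the natural auxiliary operator is $N_k(t):=J^{\sigma-k}\partial^k_t S(t)$, whose Fourier multiplier equals $(1+|\xi|^2)^{(\sigma-k)/2}m_k(\xi,t)$, where $m_k(\xi,t)=\frac{r_0^k e^{r_0 t}-r_1^k e^{r_1 t}}{r_0-r_1}$ is the symbol of $\partial^k_t S(t)$. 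Writing $r_j=\varphi(\xi)\pm i\phi(\xi)$, one checks that $|m_1|\apprle e^{\varphi(\xi)t}$ and $|m_2|\apprle \bigl(|\varphi|+\frac{|\varphi^2-\phi^2|}{\phi}\bigr)e^{\varphi(\xi)t}$. Using that $\varphi,\phi\sim|\xi|^2$ for $|\xi|\le 1$ and $|\varphi|\apprle 1$, $\phi\sim|\xi|$ for $|\xi|\ge 1$, one obtains $|m_k(\xi,t)|\apprle |\xi|^{2(k-1)}e^{\varphi(\xi)t}$ for $|\xi|\le 1$ and $|m_k(\xi,t)|\apprle |\xi|^{k-1}e^{-t/4}$ for $|\xi|\ge 1$.

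For the $L^1\to L^\infty$ endpoint I estimate the $L^1(d\xi)$-norm of the symbol of $N_k(t)$. On the low-frequency region $|\xi|\le 1$ the Bessel weight is bounded, and by combining Lemma \ref{AcotGamma} with the trivial bound $\int_{|\xi|\le 1}|\xi|^{2(k-1)}d\xi\le C$, the contribution is at most $C\min\{1,t^{-(n+2(k-1))/2}\}$, which is majorized by $Ct^{-n/2}$ (since $t^{-n/2}\ge 1$ for $0<t\le 1$ and $t^{-(n+2(k-1))/2}\le t^{-n/2}$ for $t\ge 1$). On the high-frequency region $|\xi|\ge 1$ the total multiplier is dominated by $|\xi|^{\sigma-1}e^{-t/4}$, which is integrable in $\xi$ precisely because of the assumption $\sigma<1-n$, and $e^{-t/4}\apprle t^{-n/2}$ for all $t>0$. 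For the $L^2\to L^2$ endpoint I invoke Plancherel and check that the symbol of $N_k(t)$ is uniformly bounded: on $|\xi|\le 1$ the factor $|\xi|^{2(k-1)}e^{\varphi t}\le C$ and the Bessel weight is bounded, while on $|\xi|\ge 1$ the Bessel weight $(1+|\xi|^2)^{(\sigma-k)/2}\apprle|\xi|^{\sigma-k}$ absorbs the $|\xi|^{k-1}$ growth because $\sigma-1<-n\le 0$. Riesz--Thorin then yields (\ref{p1}).

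For (\ref{p2}) I write $\partial^{k-1}_t S(t)\Delta g=\partial^{k-1}_t S(t)\,\Delta J^{-2}\,(J^2 g)$, so that $\|\partial^{k-1}_t S(t)\Delta g\|_{H^{\sigma-k+1}_p}=\|V_k(t)(J^2 g)\|_{L^p}$ where $V_k(t)=J^{\sigma-k+1}\partial^{k-1}_t S(t)\Delta J^{-2}$; since the multiplier of $\Delta J^{-2}$ is the bounded function $-|\xi|^2/(1+|\xi|^2)$, the task reduces to proving that $V_k(t):L^{p'}\to L^p$ has norm $\apprle t^{-\frac{n}{2}(1-\frac{2}{p})}$. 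This parallels the analysis for $N_{k-1}$; the only new case is $k=1$, in which the symbol of $S(t)$ equals $e^{\varphi t}\sin(\phi t)/\phi$, and the factor $|\xi|^2/(1+|\xi|^2)$ compensates the $1/\phi\sim|\xi|^{-2}$ singularity at the origin so that the low-frequency contribution reduces to $\int_{|\xi|\le 1}e^{-c|\xi|^2 t}d\xi\apprle t^{-n/2}$, while the high-frequency one is again controlled by $|\xi|^{\sigma-1}e^{-t/4}$. I expect the main technical step to be the careful manipulation of $m_2$ (the symbol of $\partial^2_t S(t)$); the cleanest route, if direct estimation proves cumbersome, is to use the Fourier form of the equation, $(1+|\xi|^2)\partial_t^2 S(t)+|\xi|^2\partial_t S(t)+|\xi|^4 S(t)=0$, to reduce $\partial^2_t S(t)$ to a bounded combination of $\partial_t S(t)$ and $|\xi|^2 S(t)$, after which the bounds from the $k=1$ case and from Lemma \ref{DtLamThe<=1} apply directly.
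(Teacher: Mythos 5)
Your proposal is correct and follows essentially the same route as the paper: endpoint bounds $L^1\to L^\infty$ (rate $t^{-n/2}$, low/high frequency split with Lemma \ref{AcotGamma} and the integrability condition $\sigma<1-n$) and $L^2\to L^2$ (uniform) for the operators $J^{\sigma-k}\partial_t^kS(t)$, followed by Riesz--Thorin, which is exactly the paper's treatment of its operators $K_1,K_2$. For (\ref{p2}) your factorization through $\Delta J^{-2}$ and $J^2g$ is the same device the paper uses when it writes $S(t)\Delta g=\Lambda_1(t)(I-\Delta)g$ and invokes Lemmas \ref{LamThe<=1} and \ref{DtLamThe<=1} with $\theta=1$ (noting $\sigma<1-n=3-n-2\theta$), which is the shortcut you yourself suggest at the end.
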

\proof
Consider $K_1(t)=\partial_t S(t) J^{\sigma-1}.$ Similarly as the proof of Lemma \ref{DtLamThe<=1}, we obtain
\begin{align*}
|K_1(t)g(x)|& \leq \left| \int_{|\xi|\leq 1} e^{-\frac{|\xi|^2 t}{2(1+|\xi|^2)}} (1+|\xi|^2)^{\frac{\sigma-1}{2}} \cos \left(\tfrac{|\xi|^2 \sqrt{3+4|\xi|^2}t} {2(1+|\xi|^2)}\right)\widehat{g}(\xi)e^{ix\cdot\xi} d\xi\right|\\
& +\left| \int_{|\xi|\leq 1} e^{-\frac{|\xi|^2 t}{2(1+|\xi|^2}}\tfrac{ (1+|\xi|^2)^{\frac{\sigma-1}{2}}}{ \sqrt{3+4|\xi|^2}} \sin\left(\tfrac{|\xi|^2 \sqrt{3+4|\xi|^2}t} {2(1+|\xi|^2)}\right)\widehat{g}(\xi)e^{ix\cdot\xi} d\xi\right|\\
&+\left| \int_{|\xi|\geq 1} e^{-\frac{|\xi|^2 t}{2(1+|\xi|^2)}} (1+|\xi|^2)^{\frac{\sigma-1}{2}} \cos \left(\tfrac{|\xi|^2 \sqrt{3+4|\xi|^2}t} {2(1+|\xi|^2)}\right)\widehat{g}(\xi)e^{ix\cdot\xi} d\xi\right|\\
& +\left| \int_{|\xi|\geq 1} e^{-\frac{|\xi|^2 t}{2(1+|\xi|^2}}\tfrac{ (1+|\xi|^2)^{\frac{\sigma-1}{2}}}{ \sqrt{3+4|\xi|^2}} \sin\left(\tfrac{|\xi|^2 \sqrt{3+4|\xi|^2}t} {2(1+|\xi|^2)}\right)\widehat{g}(\xi)e^{ix\cdot\xi} d\xi\right|\\
& \apprle t^{-\frac n2}  \|g\|_{L^1(\mathbb{R}^n)} + e^{-\frac t4}\|g\|_{L^1(\mathbb{R}^n)} \int_{|\xi|\geq 1} (1+|\xi|^2)^{\frac{\sigma-1}{2}} d\xi+ e^{-\frac t4}\|g\|_{L^1(\mathbb{R}^n)} \int_{|\xi|\geq 1} (1+|\xi|^2)^{\frac{\sigma-2}{2}} d\xi \\
& \apprle  t^{-\frac n2}  \|g\|_{L^1(\mathbb{R}^n)} + e^{-\frac t4}\|g\|_{L^1(\mathbb{R}^n)}  \int_{1}^{\infty} r^{\sigma-1}r^{n-1} dr  +  e^{-\frac t4}\|g\|_{L^1(\mathbb{R}^n)}  \int_{1}^{\infty} r^{\sigma-2}r^{n-1} dr  \\
& \apprle    t^{-\frac n2}  \|g\|_{L^1(\mathbb{R}^n)} + e^{-\frac t4}\|g\|_{L^1(\mathbb{R}^n)}  \apprle    t^{-\frac n2}  \|g\|_{L^1(\mathbb{R}^n)}.
\end{align*}
Now, is not difficult to see that 
\[
\|K_1(t)g\|_{L^2(\mathbb{R}^n)}  \apprle \|g\|_{L^2(\mathbb{R}^n)}. 
\]
Then, recalling that $K_1(t)= \partial_t S(t)J^{\sigma},$ and applying the Riesz-Thorin interpolation theorem, we conclude the proof of (\ref{p1}), for $k=1$. \\

On the other hand, considering $K_2(t)=\partial^2_t S(t) J^{\sigma-2},$ we get  
\begin{align*}
|K_2(t)g(x)|& \apprle \left| \int_{|\xi|\leq 1} e^{-\frac{|\xi|^2 t}{2(1+|\xi|^2)}} \tfrac{|\xi|^2 }{1+|\xi|^2} (1+|\xi|^2)^{\frac{\sigma-2}{2}} \cos \left(\tfrac{|\xi|^2 \sqrt{3+4|\xi|^2}t} {2(1+|\xi|^2)}\right)\widehat{g}(\xi)e^{ix\cdot\xi} d\xi\right|\\
& +\left| \int_{|\xi|\leq 1} e^{-\frac{|\xi|^2 t}{2(1+|\xi|^2}}\tfrac{ |\xi|^2(1+2|\xi|^2)(1+|\xi|^2)^{\frac{\sigma-2}{2}}}{ (1+|\xi|^2)\sqrt{3+4|\xi|^2}} \sin\left(\tfrac{|\xi|^2 \sqrt{3+4|\xi|^2}t} {2(1+|\xi|^2)}\right)\widehat{g}(\xi)e^{ix\cdot\xi} d\xi\right|\\
& +  \left| \int_{|\xi|\geq 1} e^{-\frac{|\xi|^2 t}{2(1+|\xi|^2)}} \tfrac{|\xi|^2 }{1+|\xi|^2} (1+|\xi|^2)^{\frac{\sigma-2}{2}} \cos \left(\tfrac{|\xi|^2 \sqrt{3+4|\xi|^2}t} {2(1+|\xi|^2)}\right)\widehat{g}(\xi)e^{ix\cdot\xi} d\xi\right|\\
& +\left| \int_{|\xi|\geq 1} e^{-\frac{|\xi|^2 t}{2(1+|\xi|^2}}\tfrac{ |\xi|^2(1+2|\xi|^2)(1+|\xi|^2)^{\frac{\sigma-2}{2}}}{ (1+|\xi|^2)\sqrt{3+4|\xi|^2}} \sin\left(\tfrac{|\xi|^2 \sqrt{3+4|\xi|^2}t} {2(1+|\xi|^2)}\right)\widehat{g}(\xi)e^{ix\cdot\xi} d\xi\right|\\
& \apprle t^{-\frac n2} \|g\|_{L^1(\mathbb{R}^n)} + e^{-\frac t4} \|g\|_{L^1(\mathbb{R}^n)} \int_{|\xi|\geq 1} (1+|\xi|^2)^{\frac{\sigma-2}{2}} d\xi  + e^{-\frac t4}  \|g\|_{L^1(\mathbb{R}^n)} \int_{|\xi|\geq 1} (1+|\xi|^2)^{\frac{\sigma-1}{2}} d\xi\\
& \apprle t^{-\frac n2} \|g\|_{L^1(\mathbb{R}^n)} + e^{-\frac t4} \|g\|_{L^1(\mathbb{R}^n)}  \int_{0}^{\infty} r^{\sigma-2}r^{n-1} dr  + e^{-\frac t4}  \|g\|_{L^1(\mathbb{R}^n)}  \int_{0}^{\infty} r^{\sigma-1}r^{n-1} dr   \\
& \apprle  t^{-\frac n2} \|g\|_{L^1(\mathbb{R}^n)}   + e^{-\frac t4}  \|g\|_{L^1(\mathbb{R}^n)}    \apprle  t^{-\frac n2} \|g\|_{L^1(\mathbb{R}^n)}.
\end{align*}
Also, as before
\[
\|K_2(t)g\|_{L^2(\mathbb{R}^n)}  \apprle \|g\|_{L^2(\mathbb{R}^n)}. 
\]
Then, recalling that $K_2(t)= \partial^2_t S(t)J^{\sigma-2},$ and applying the Riesz-Thorin interpolation theorem, we conclude the proof of (\ref{p1}), for $k=2$. \\
Now, we prove inequality (\ref{p2}). From Lemma \ref{LamThe<=1} with $\theta=1$, we have
\[
\Vert S(t) \Delta g \Vert_{H_p^{\sigma}(\mathbb{R}^n)} = \Vert \Lambda_1 (t) (I-\Delta) g \Vert_{H_p^{\sigma}(\mathbb{R}^n)}  \leq C t^{-\frac n 2(1-\frac 2p)}\Vert (I-\Delta)g \Vert_{L^{p'}(\mathbb{R}^n)}\leq  C t^{-\frac n 2(1-\frac 2p)}\Vert g \Vert_{H^2_{p'}(\mathbb{R}^n)},
\]
which finishes the proof of the inequality (\ref{p2}) with $k=1.$ In a similar way as before, by Lemma \ref{DtLamThe<=1} with $\theta=1$, we obtain 
\[
\Vert \partial_t S(t) \Delta g \Vert_{H_p^{\sigma-1}(\mathbb{R}^n)} = \Vert \partial_t \Lambda_1 (t) (I-\Delta) g \Vert_{H_p^{\sigma-1}(\mathbb{R}^n)}  \leq C t^{-\frac n 2(1-\frac 2p)}\Vert (I-\Delta)g \Vert_{L^{p'}(\mathbb{R}^n)}\leq  C t^{-\frac n 2(1-\frac 2p)}\Vert g \Vert_{H^2_{p'}(\mathbb{R}^n)},
\]
which finishes the proof of the Lemma.
\endproof

%%%%%%%%%%%%%%%%%%%%%%%%%%%%%%%%%%%%%%%%%%%%%%%%%%%%%%%%%%%%%%%%%%%%%%%%%%%%%%%%%%
%%%%%%%%%%%%%%%%%%%%%%%%%%%%%%%%%%%%%%%%%%%%%%%%%%%%%%%%%%%%%%%%%%%%%%%%%%%%%%%%%%
The next lemmas are useful to deal with the nonlinear part of the integro-differential equation (\ref{e3}).
\begin{lemma}[Banquet-Ferreira-Villamizar \protect\cite{Banquet1}]
\label{NonIne1} Let $\lambda \geq 2,$ $1<p,q<\infty $ and $s>0$ be such that $s<\min \{\frac{n}{q},\lambda -1\}$ and
\begin{equation*}
1-\frac{1}{p}\geq \frac{1}{q}+\frac{\lambda -1}{n}\left( \frac{n}{q}%
-s\right) .
\end{equation*}
There exists an universal constant $C>0$ such that
\begin{equation}
\left\Vert |f|^{\lambda }-|g|^{\lambda }\right\Vert _{H_{p^{\prime
}}^{s}}\leq C\Vert f-g\Vert _{H_{q}^{s}}\left[ \Vert f\Vert
_{H_{q}^{s}}^{\lambda -1}+\Vert g\Vert _{H_{q}^{s}}^{\lambda -1}\right] .
\label{aux-inl}
\end{equation}%
Furthermore, if $\lambda $ is a positive odd integer then (\ref{aux-inl})
holds without the restriction $s<\lambda -1.$
\end{lemma}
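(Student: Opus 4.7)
The plan is to establish (\ref{aux-inl}) by combining a pointwise factorization with a Kato--Ponce fractional Leibniz rule and a fractional chain rule. For real-valued $f,g$ and $\lambda\geq 2$, the fundamental theorem of calculus applied to $F(z)=|z|^\lambda$ gives the mean-value identity
\begin{equation*}
|f|^\lambda-|g|^\lambda=\lambda(f-g)\int_0^1\bigl|\tau f+(1-\tau)g\bigr|^{\lambda-2}\bigl(\tau f+(1-\tau)g\bigr)\,d\tau.
\end{equation*}
Writing $h_\tau:=\tau f+(1-\tau)g$ and $G(z):=|z|^{\lambda-2}z$, and pushing the $H^s_{p'}$-norm inside the $\tau$-integral by Minkowski's inequality, the task reduces to bounding $\|(f-g)\,G(h_\tau)\|_{H^s_{p'}}$ uniformly in $\tau\in[0,1]$.

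Next, I would apply the Kato--Ponce (fractional Leibniz) inequality
\begin{equation*}
\|(f-g)\,G(h_\tau)\|_{H^s_{p'}}\leq C\,\|f-g\|_{H^s_q}\,\|G(h_\tau)\|_{L^{r_1}}+C\,\|f-g\|_{L^{r_2}}\,\|G(h_\tau)\|_{H^s_{r_3}},
\end{equation*}
with H\"older balances $1/p'=1/q+1/r_1=1/r_2+1/r_3$. The pointwise bound $|G(h_\tau)|\leq(|f|+|g|)^{\lambda-1}$, H\"older's inequality, and the Sobolev embedding $H^s_q\hookrightarrow L^{q(\lambda-1)}$---which requires the upper bound $s<n/q$ stated in the hypothesis---control $\|G(h_\tau)\|_{L^{r_1}}$, while $\|f-g\|_{L^{r_2}}$ is handled by the same embedding. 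Solving the H\"older relations jointly with these Sobolev embeddings produces exactly the scaling hypothesis $1-1/p\geq 1/q+\frac{\lambda-1}{n}(n/q-s)$.

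The main obstacle, and the origin of the upper bound $s<\lambda-1$, is the estimate of $\|G(h_\tau)\|_{H^s_{r_3}}$. Since $G(z)=|z|^{\lambda-2}z$ has only a finite number of classical derivatives at the origin when $\lambda$ is not an integer, a Christ--Weinstein-type fractional chain rule gives
\begin{equation*}
\|G(h_\tau)\|_{H^s_{r_3}}\leq C\,\bigl(\|f\|_{H^s_q}+\|g\|_{H^s_q}\bigr)^{\lambda-1}
\end{equation*}
only as long as $s$ stays below the smoothness threshold of $G$, namely $s<\lambda-1$. When $\lambda$ is a positive odd integer, the identity $|f|^\lambda=|f|\,f^{\lambda-1}$ together with the binomial factorization $a^\lambda-b^\lambda=(a-b)\sum_{k=0}^{\lambda-1}a^kb^{\lambda-1-k}$ applied to $a=|f|,\,b=|g|$, combined with $\bigl||f|-|g|\bigr|\leq|f-g|$ and iterated applications of Kato--Ponce to the polynomial factors, bypasses the chain rule entirely and removes the restriction on $s$. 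Collecting the bounds and using $\|h_\tau\|_{H^s_q}\leq\|f\|_{H^s_q}+\|g\|_{H^s_q}$ yields (\ref{aux-inl}).
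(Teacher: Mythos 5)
First, note that the paper does not prove this lemma at all: it is imported verbatim from \cite{Banquet1}, so there is no internal proof to compare against. Judged on its own terms, your main-case strategy is the standard one and is essentially sound: the mean-value identity $|f|^{\lambda}-|g|^{\lambda}=\lambda(f-g)\int_0^1|h_\tau|^{\lambda-2}h_\tau\,d\tau$ is correct for $\lambda\geq 2$, and your H\"older/Sobolev bookkeeping is right --- requiring $H^s_q\hookrightarrow L^{r_1(\lambda-1)}$ with $1/p'=1/q+1/r_1$ gives precisely $1-\tfrac1p\geq\tfrac1q+\tfrac{\lambda-1}{n}(\tfrac nq-s)$, and the analogous balance for the second Kato--Ponce term is consistent with the same condition. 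Two caveats: (i) when the scaling hypothesis holds with strict inequality you must also check the lower endpoint $r_1(\lambda-1)\geq q$ of the embedding range; (ii) since $s$ may well exceed $1$ here, the Christ--Weinstein chain rule in its usual form does not apply, and one needs the full composition-operator theorems for $z\mapsto|z|^{\lambda-2}z$ on $H^s_r$ (Runst--Sickel \cite{RunstSickel}), which is where the threshold $s<\lambda-1$ actually comes from; you identify the threshold correctly but the chain-rule citation is not the right tool for large $s$.

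The genuine gap is in your treatment of the odd-integer case. The factorization $|f|^{\lambda}-|g|^{\lambda}=(|f|-|g|)\sum_{k=0}^{\lambda-1}|f|^k|g|^{\lambda-1-k}$ combined with $\bigl||f|-|g|\bigr|\leq|f-g|$ only controls Lebesgue norms of the first factor: the estimate $\Vert\,|f|-|g|\,\Vert_{H^s_q}\lesssim\Vert f-g\Vert_{H^s_q}$ is false for $s$ large, because $x\mapsto|x|$ is not a bounded composition on $H^s_q$ beyond low regularity. Moreover the factors $|f|^k$ with $k$ odd are themselves non-smooth compositions of exactly the type you were trying to avoid, so the ``binomial'' route does not bypass the chain rule --- it reintroduces the same obstruction. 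Removing the restriction $s<\lambda-1$ for odd $\lambda$ requires the sharper Nemytskii-operator results for the specific functions $|z|^{\lambda}$ and $|z|^{\lambda-1}z$ (again Runst--Sickel), not an algebraic identity; as written, this part of your argument does not go through.
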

\begin{lemma}[Zhu \protect\cite{Zhu}, Lemma 3.4]\label{leibnitz}
Let $\lambda\geq 2$ be a positive integer. If $u, \tilde{u}\in H^s\cap L^\infty$ and $\Vert u\Vert_{L^\infty}\leq M,$ $\Vert \tilde{u}\Vert_{L^\infty}\leq M,$ then 
\begin{eqnarray*}
\Vert \vert u\vert^\lambda-\vert \tilde{u}\vert^\lambda\Vert_{H^s}&\leq & C(M) [\Vert u-\tilde{u}\Vert_{L^\infty}(\Vert u\Vert_{H^s}+\Vert \tilde{u}\Vert_{H^s}) (\Vert u\Vert_{L^\infty}+\Vert \tilde{u}\Vert_{L^\infty})^{\lambda-2}\nonumber\\
&&+\Vert u-\tilde{u}\Vert_{H^s}(\Vert u\Vert_{L^\infty}+\Vert \tilde{u}\Vert_{L^\infty})^{\lambda-1}],\\
\Vert \vert u\vert^\lambda-\vert \tilde{u}\vert^\lambda\Vert_{L^1}&\leq & C(M) [(\Vert u\Vert_{L^\infty}+\Vert \tilde{u}\Vert_{L^\infty})^{\lambda-2}(\Vert u\Vert_{L^2}+\Vert \tilde{u}\Vert_{L^2})\Vert u-\tilde{u}\Vert_{L^2}],
\end{eqnarray*}
where $C(M)$ is a constant dependent on $M.$
\end{lemma}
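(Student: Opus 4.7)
The plan is to derive both inequalities from a single pointwise identity for $|u|^{\lambda}-|\tilde u|^{\lambda}$, followed by Leibniz-type (Kato-Ponce) estimates for fractional derivatives of a product and a Moser-type composition estimate. Start from the integral form of the mean value theorem: since $t\mapsto |t|^{\lambda}$ is $C^{1}$ on $\mathbb{R}$ for every integer $\lambda\geq 2$ with derivative $\lambda|t|^{\lambda-2}t$, one has
\[
|u|^{\lambda}-|\tilde u|^{\lambda}=(u-\tilde u)\,G(u,\tilde u),\qquad G(u,\tilde u):=\lambda\int_{0}^{1}|\theta u+(1-\theta)\tilde u|^{\lambda-2}\bigl(\theta u+(1-\theta)\tilde u\bigr)\,d\theta.
\]
The pointwise bound $\|G(u,\tilde u)\|_{L^\infty}\leq C(\|u\|_{L^{\infty}}+\|\tilde u\|_{L^{\infty}})^{\lambda-1}$ is immediate.

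For the $H^{s}$ inequality, apply the Kato-Ponce product estimate
\[
\|fg\|_{H^{s}}\leq C\bigl(\|f\|_{L^{\infty}}\|g\|_{H^{s}}+\|f\|_{H^{s}}\|g\|_{L^{\infty}}\bigr)
\]
with $f=u-\tilde u$ and $g=G(u,\tilde u)$. The $L^{\infty}$ factor of $G$ is already controlled; for the $H^{s}$ norm of $G$, invoke the Moser-type composition theorem: if $F\in C^{\lceil s\rceil+1}$ with $F(0)=0$, then $\|F(v)\|_{H^{s}}\leq C(\|v\|_{L^{\infty}})\|v\|_{H^{s}}$. Applied to $F(v)=|v|^{\lambda-2}v$ with $v=\theta u+(1-\theta)\tilde u$, and then integrating in $\theta\in[0,1]$, one obtains
\[
\|G(u,\tilde u)\|_{H^{s}}\leq C(M)(\|u\|_{L^{\infty}}+\|\tilde u\|_{L^{\infty}})^{\lambda-2}(\|u\|_{H^{s}}+\|\tilde u\|_{H^{s}}).
\]
Combining these two bounds yields the first stated inequality.

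For the $L^{1}$ inequality, use the elementary telescoping identity
\[
|u|^{\lambda}-|\tilde u|^{\lambda}=(|u|-|\tilde u|)\sum_{k=0}^{\lambda-1}|u|^{k}\,|\tilde u|^{\lambda-1-k},
\]
together with the pointwise bound $\bigl||u|-|\tilde u|\bigr|\leq |u-\tilde u|$. By Cauchy-Schwarz,
\[
\bigl\||u|^{\lambda}-|\tilde u|^{\lambda}\bigr\|_{L^{1}}\leq \|u-\tilde u\|_{L^{2}}\sum_{k=0}^{\lambda-1}\bigl\||u|^{k}|\tilde u|^{\lambda-1-k}\bigr\|_{L^{2}},
\]
and in each summand one pulls $\lambda-2$ factors out in $L^{\infty}$ (using $\|u\|_{L^{\infty}},\|\tilde u\|_{L^{\infty}}\leq M$) while leaving one factor in $L^{2}$. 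Summing yields exactly the bound $C(M)(\|u\|_{L^{\infty}}+\|\tilde u\|_{L^{\infty}})^{\lambda-2}(\|u\|_{L^{2}}+\|\tilde u\|_{L^{2}})\|u-\tilde u\|_{L^{2}}$.

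The main technical obstacle is the Moser-type composition estimate for $F(v)=|v|^{\lambda-2}v$, because $t\mapsto|t|$ is only Lipschitz (not smooth) at the origin. There are two standard ways to remove this difficulty when $\lambda\geq 2$ is an integer: for even $\lambda$, $F(v)=v^{\lambda-1}$ is a polynomial and iterated Kato-Ponce applies directly; for odd $\lambda$, $F(v)=v\,(v^{2})^{(\lambda-2)/2}$ is still smooth on $\mathbb{R}$, and the same iteration works. Alternatively, one appeals to the paradifferential version of the Moser estimate, which requires only finitely many derivatives of $F$ to be locally bounded, a condition trivially verified here for any fixed $s$ on $\{|v|\leq M\}$.
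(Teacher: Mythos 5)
The paper itself offers no proof of this lemma: it is imported verbatim from Zhu \cite{Zhu} (Lemma 3.4), so there is no internal argument to compare yours against; the only question is whether your argument stands on its own. Most of it does. The factorization $|u|^{\lambda}-|\tilde u|^{\lambda}=(u-\tilde u)G(u,\tilde u)$ via the $C^1$ function $t\mapsto|t|^{\lambda}$, the Kato--Ponce product estimate (which, note, silently requires $s>0$), and the telescoping identity plus Cauchy--Schwarz for the $L^1$ bound are all correct; in particular the $L^1$ inequality is fully proved, and the $H^{s}$ inequality is proved for \emph{even} $\lambda$, where $|v|^{\lambda-2}v=v^{\lambda-1}$ is a polynomial and iterated Kato--Ponce really does give the homogeneous bound $\Vert v^{\lambda-1}\Vert_{H^s}\lesssim \Vert v\Vert_{L^\infty}^{\lambda-2}\Vert v\Vert_{H^s}$ needed for $G$.

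The gap is the odd-$\lambda$ case. When $\lambda$ is odd, $(\lambda-2)/2$ is not an integer, so $v\,(v^{2})^{(\lambda-2)/2}$ is just $|v|^{\lambda-2}v$ again and is \emph{not} smooth: for $\lambda=3$ it equals $v|v|$, whose second derivative is $2\,\mathrm{sgn}(v)$, so $F$ is only $C^{\lambda-2}$ with Lipschitz top derivative. Hence your claim that ``the same iteration works'' has nothing to iterate on, and your fallback is also not available as stated: the paradifferential/Moser composition estimate requires roughly $\lceil s\rceil+1$ bounded derivatives of $F$ near the origin, which fails once $s\geq \lambda-1$, so the hypothesis is not ``trivially verified for any fixed $s$.'' There is a second, independent problem: the Moser estimate in the form you quote, $\Vert F(v)\Vert_{H^s}\leq C(\Vert v\Vert_{L^\infty})\Vert v\Vert_{H^s}$, only yields $\Vert G\Vert_{H^s}\leq C(M)(\Vert u\Vert_{H^s}+\Vert \tilde u\Vert_{H^s})$ and does not produce the factor $(\Vert u\Vert_{L^\infty}+\Vert \tilde u\Vert_{L^\infty})^{\lambda-2}$; that factor is part of the statement and is exactly what is exploited in Proposition \ref{nl2g} (it becomes the decay $(1+\tau)^{-\alpha_1(\lambda-2)}$ that makes the time integral converge), so it cannot be absorbed into $C(M)$. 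To close the odd case you would need a homogeneous composition estimate for $|v|^{\lambda-2}v$, which holds only in a restricted range of $s$ tied to the limited smoothness of this function, not for arbitrary $s$ as your write-up asserts.
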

We finish this section by recalling the following technical lemma.
\begin{lemma}[Zhu  \protect\cite{Zhu}, Lemma 3.5] \label{lem_int}
Let $a,b$ non negative real constants such that $b\geq a\geq 0.$ Then,
\begin{eqnarray*}
\int_0^t(1+t-\tau)^{-a}(1+\tau)^{-b}d\tau\leq C(1+t)^{-a}\int_0^t(1+\tau)^{-b}d\tau.
\end{eqnarray*}
\end{lemma}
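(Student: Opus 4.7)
The plan is a standard splitting argument: write
\[
\int_0^t (1+t-\tau)^{-a}(1+\tau)^{-b}\,d\tau = \int_0^{t/2}(\cdots)\,d\tau + \int_{t/2}^t(\cdots)\,d\tau,
\]
and estimate each piece separately by exploiting that one of the two factors is uniformly comparable to $(1+t)^{-a}$ (respectively $(1+t)^{-b}$) on the corresponding subinterval.

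For the first piece, $\tau \in [0,t/2]$, I would use $1+t-\tau \geq 1+t/2 \geq \tfrac{1}{2}(1+t)$, which gives $(1+t-\tau)^{-a}\leq 2^{a}(1+t)^{-a}$. Pulling this factor out leaves exactly $2^{a}(1+t)^{-a}\int_0^{t/2}(1+\tau)^{-b}d\tau$, which is trivially bounded by $2^{a}(1+t)^{-a}\int_0^t(1+\tau)^{-b}d\tau$ as required.

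For the second piece, $\tau \in [t/2,t]$, I would use $1+\tau \geq \tfrac12(1+t)$, so $(1+\tau)^{-b}\leq 2^{b}(1+t)^{-b}$, obtaining
\[
\int_{t/2}^t(1+t-\tau)^{-a}(1+\tau)^{-b}d\tau \leq 2^{b}(1+t)^{-b}\int_{t/2}^t(1+t-\tau)^{-a}d\tau = 2^{b}(1+t)^{-b}\int_0^{t/2}(1+s)^{-a}ds
\]
after the substitution $s = t-\tau$. At this stage the obstacle appears: the remaining integrand has $(1+s)^{-a}$ rather than $(1+s)^{-b}$. This is exactly where the hypothesis $b\geq a$ is used; since $s\leq t$,
\[
(1+s)^{-a} = (1+s)^{-b}(1+s)^{\,b-a}\leq (1+t)^{\,b-a}(1+s)^{-b},
\]
so $\int_0^{t/2}(1+s)^{-a}ds \leq (1+t)^{\,b-a}\int_0^t(1+s)^{-b}ds$. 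Inserting this into the previous bound, the factor $(1+t)^{b-a}$ cancels the excess exponent in $(1+t)^{-b}$ and yields $2^{b}(1+t)^{-a}\int_0^t(1+s)^{-b}ds$. Adding the two contributions gives the claim with $C = 2^a+2^b$. The only non-mechanical step is the exchange $(1+s)^{-a}\leftrightarrow (1+s)^{-b}$, which is the whole reason the hypothesis $b\geq a$ must be assumed.
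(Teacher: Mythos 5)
Your argument is correct: the splitting at $\tau=t/2$, the bounds $(1+t-\tau)^{-a}\leq 2^{a}(1+t)^{-a}$ on $[0,t/2]$ and $(1+\tau)^{-b}\leq 2^{b}(1+t)^{-b}$ on $[t/2,t]$, and the exchange $(1+s)^{-a}\leq (1+t)^{b-a}(1+s)^{-b}$ (which is exactly where $b\geq a$ enters) all hold, giving the stated inequality with $C=2^{a}+2^{b}$. Note that the paper itself gives no proof of this lemma --- it is quoted from Zhu \cite{Zhu}, Lemma 3.5 --- and your splitting argument is the standard one used there, so there is nothing to reconcile.
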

%%%%%%%%%%%%%%%%%%%%%GLOBAL SOLUTIONS
\section{Global solutions}
Before proving the existence of global solutions, we establish an estimate of the nonlinear term of integro-differential equation (\ref{e3}) in the norm
\begin{eqnarray*}
\Vert u\Vert_{\mathcal{Y}^{s,\alpha_1}}&:=&\sup_{0<t<\infty}((1+t)^{\alpha_1}\Vert u(t)\Vert_{L^\infty}+ \Vert u(t)\Vert_{H^s}+\Vert u_t(t)\Vert_{H^{s-1}}),\ \alpha_1=\tfrac{n+2(\theta-1)}{2},
\end{eqnarray*}
as well as in the norm
\begin{eqnarray*}
\Vert u\Vert_{\mathcal{X}^{s,p}_{\alpha, \beta}}&:=&\sup_{0<t<\infty}( t^{\alpha} \Vert u(t)\Vert_{H^s_p}+ t^{\beta}\Vert u_t(t)\Vert_{H_p^{s-1}}),
\end{eqnarray*}
 with $\alpha=\frac{1}{\lambda-1} [ 2-\theta-\frac n2(1-\frac 2p) ]$ and $\beta=\alpha+1-\theta.$

\begin{proposition}\label{nl2g} Let $\lambda\geq 3$ be a positive integer such that $\alpha_1(\lambda-2)>1,$  $\alpha_1=\frac{n+2(\theta-1)}{2},$  $\theta \in (\frac{2-n}{2},1]$ if $n=1,2$ and $\theta\in [0,1]$ if $n\geq 3,$ and consider
$s>\frac{n+4\theta -6}{2}$. Then, there exists $C_1>0$ such that
\begin{equation}
\left\Vert \int_{0}^{t} \Lambda_{\theta}(t-\tau)(|u|^{\lambda}-|\tilde{u}|^{\lambda})d\tau \right\Vert_{\mathcal{Y}^{s,\alpha_1}}\leq C_1 (1+t^{1-\theta}) \Vert u-\tilde{u}\Vert_{\mathcal{Y}^{s,\alpha_1}}(\Vert u\Vert^{\lambda-1}_{\mathcal{Y}^{s,\alpha_1\alpha_1}}+\Vert\tilde{u}\Vert^{\lambda-1}_{\mathcal{Y}^{s,\alpha_1}}),  \label{inl2za}
\end{equation}
where $\Lambda_{\theta}(t)=S(t)(I-\Delta)^{-1}(-\Delta)^{\theta}.$ 
\end{proposition}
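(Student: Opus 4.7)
The plan is to split the $\mathcal{Y}^{s,\alpha_1}$-norm into its three constituent pieces and estimate each separately. Writing $N(u,\tilde u)(t):=\int_{0}^{t}\Lambda_{\theta}(t-\tau)(|u|^{\lambda}-|\tilde u|^{\lambda})(\tau)\,d\tau$, I need to bound $(1+t)^{\alpha_1}\|N(u,\tilde u)(t)\|_{L^\infty}$, $\|N(u,\tilde u)(t)\|_{H^s}$, and $\|\partial_t N(u,\tilde u)(t)\|_{H^{s-1}}$ uniformly in $t>0$, and then collect a $(1+t^{1-\theta})$ factor at the end. Along the way I will repeatedly control the nonlinear difference $|u|^{\lambda}-|\tilde u|^{\lambda}$ via Lemma \ref{leibnitz}, converting its $L^1$, $L^2$ and $H^s$ norms into decaying factors in $\tau$ using the definition of the $\mathcal{Y}^{s,\alpha_1}$ norm. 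In particular, the $L^\infty$ factors produce $(1+\tau)^{-\alpha_1}$ weights, so each of the two terms in Lemma \ref{leibnitz} will yield an overall time weight of the form $(1+\tau)^{-\alpha_1(\lambda-1)}$ (using the embedding $H^s\hookrightarrow L^2$ for the $L^1$ version), and the assumption $\alpha_1(\lambda-2)>1$ will guarantee $\alpha_1(\lambda-1)>1+\alpha_1>1$, which is precisely what makes the time integrals converge.

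For the $L^\infty$ piece I would apply Lemma \ref{HsL1(1+t)<=1} to get
\[
\|\Lambda_{\theta}(t-\tau)(|u|^{\lambda}-|\tilde u|^{\lambda})\|_{L^\infty}\lesssim (1+(t-\tau))^{-\alpha_1}\bigl(\||u|^{\lambda}-|\tilde u|^{\lambda}\|_{L^1}+\||u|^{\lambda}-|\tilde u|^{\lambda}\|_{H^s}\bigr),
\]
then insert the bounds from Lemma \ref{leibnitz} and factor out $\|u-\tilde u\|_{\mathcal{Y}^{s,\alpha_1}}(\|u\|_{\mathcal{Y}^{s,\alpha_1}}^{\lambda-1}+\|\tilde u\|_{\mathcal{Y}^{s,\alpha_1}}^{\lambda-1})$. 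The remaining convolution
\[
\int_{0}^{t}(1+(t-\tau))^{-\alpha_1}(1+\tau)^{-\alpha_1(\lambda-1)}\,d\tau
\]
is handled by Lemma \ref{lem_int} with $a=\alpha_1$ and $b=\alpha_1(\lambda-1)\geq a$, yielding $(1+t)^{-\alpha_1}$ times a finite constant, which absorbs the prefactor $(1+t)^{\alpha_1}$.

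For the $H^s$ piece I would use Corollary \ref{Coro1} to get $\|\Lambda_{\theta}(t-\tau)g\|_{H^s}\lesssim (t-\tau)^{1-\theta}\|g\|_{H^s}$, apply the $H^s$ estimate in Lemma \ref{leibnitz}, and reduce matters to
\[
\int_{0}^{t}(t-\tau)^{1-\theta}(1+\tau)^{-\alpha_1(\lambda-1)}\,d\tau.
\]
Since $1-\theta\geq 0$, bounding $(t-\tau)^{1-\theta}\leq t^{1-\theta}$ and using integrability at infinity of $(1+\tau)^{-\alpha_1(\lambda-1)}$ (which follows from $\alpha_1(\lambda-2)>1$) gives $\lesssim t^{1-\theta}$, from which a $1+t^{1-\theta}$ control also emerges after separating the regimes $t\leq 1$ and $t\geq 1$. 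For the $H^{s-1}$ estimate on $\partial_t N(u,\tilde u)$, I would first observe that $\Lambda_{\theta}(0)=0$ (the $\sin$ factor vanishes at $t=0$), so differentiating under the integral gives $\partial_t N(u,\tilde u)(t)=\int_{0}^{t}\partial_t\Lambda_{\theta}(t-\tau)(|u|^{\lambda}-|\tilde u|^{\lambda})\,d\tau$; Corollary \ref{Coro2} then provides $\|\partial_t\Lambda_{\theta}(t-\tau)g\|_{H^{s-1}}\lesssim\|g\|_{H^{s-1}}\leq\|g\|_{H^s}$, and the same Lemma \ref{leibnitz} plus time-integrability argument closes this last piece.

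The main technical obstacle I anticipate is the book-keeping around the factor $t^{1-\theta}$ coming from Corollary \ref{Coro1}: it lacks any decay in $t-\tau$, so the convergence of the time integral must be harvested entirely from the nonlinear side via the weight $(1+\tau)^{-\alpha_1(\lambda-1)}$, and this is exactly why the hypothesis $\alpha_1(\lambda-2)>1$ is tight. Everything else is a careful but routine assembly using Lemma \ref{lem_int} to patch the $(1+(t-\tau))^{-\alpha_1}$ and $(1+\tau)^{-\alpha_1(\lambda-1)}$ factors together, and a split into $t\leq 1$ and $t\geq 1$ to convert between $t^{1-\theta}$ and $1+t^{1-\theta}$ where convenient.
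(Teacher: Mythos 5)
Your proposal follows essentially the same route as the paper: the same three-way split of the $\mathcal{Y}^{s,\alpha_1}$-norm, with Lemma \ref{HsL1(1+t)<=1} plus Lemma \ref{leibnitz} and Lemma \ref{lem_int} for the $L^\infty$ piece, Corollary \ref{Coro1} with the crude bound $(t-\tau)^{1-\theta}\leq t^{1-\theta}$ for the $H^s$ piece, and Corollary \ref{Coro2} for the $H^{s-1}$ bound on the time derivative. One bookkeeping correction: the $L^1$ estimate in Lemma \ref{leibnitz} carries only $\lambda-2$ factors of $L^\infty$-norms (the two $L^2$-norms are controlled by the undecayed $H^s$ component of $\mathcal{Y}^{s,\alpha_1}$), so that term yields the weight $(1+\tau)^{-\alpha_1(\lambda-2)}$ rather than $(1+\tau)^{-\alpha_1(\lambda-1)}$ — harmless here, since the hypothesis $\alpha_1(\lambda-2)>1$ is exactly what makes this integral converge and $\lambda\geq 3$ gives $\alpha_1(\lambda-2)\geq\alpha_1$ as required by Lemma \ref{lem_int}.
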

\begin{proof} From Lemma \ref{HsL1(1+t)<=1} we get
\begin{align}
\left\Vert \int_{0}^{t} \Lambda_{\theta}(t-\tau)(|u|^{\lambda}-|\tilde{u}|^{\lambda})d\tau\right\Vert_{L^{\infty}}& \leq C \int_{0}^{t} (1+t-\tau)^{-\frac{n+2(\theta-1)}{2}}\left(\Vert(|u|^{\lambda}-|\tilde{u}|^{\lambda})\Vert_{L^{1}}+\Vert(|u|^{\lambda}-|\tilde{u}|^{\lambda})\Vert_{H^{s}}\right)d\tau \notag\\
&:=J_1+J_2.\label{b10ga}
\end{align}
%On the other hand, from Lemma \ref{DtHsL1The=1} we get
%\begin{eqnarray}
%&&\left\Vert \int_{0}^{t} \partial_t\Lambda(t-\tau)(|u|^{\lambda}-|\tilde{u}|^{\lambda})d\tau\right\Vert_{L^{\infty}}\leq C \int_{0}^{t} \left((t-\tau)^{-\frac{3n}{8n+1}}\Vert(|u|^{\lambda}-|\tilde{u}|^{\lambda})\Vert_{L^{1}}+e^{-\frac{(t-\tau)}{4}}\Vert(|u|^{\lambda}-|\tilde{u}|^{\lambda})\Vert_{H^{s+1}}\right)d\tau\nonumber\\
%&&\leq C \int_{0}^{t} (t-\tau)^{-\frac{3n}{8n+1}}\left(\Vert(|u|^{\lambda}-|\tilde{u}|^{\lambda})\Vert_{L^{1}}+\Vert(|u|^{\lambda}-|\tilde{u}|^{\lambda})\Vert_{H^{s+1}}\right)d\tau.\label{b10kg}
%\end{eqnarray}
Now, from Lemma \ref{leibnitz}, using that $\alpha_1(\lambda-2)>1$ and taking into account Lemma \ref{lem_int}, we bound the right hand side of (\ref{b10ga}) as follows
\begin{eqnarray}
J_1 &\leq &C\int_{0}^{t} (1+t-\tau)^{-\alpha_1} \left( \Vert u\Vert_{L^\infty}+ \Vert \tilde{u}\Vert_{L^\infty}\right)^{\lambda-2}( \Vert u\Vert_{L^2}+ \Vert \tilde{u}\Vert_{L^2})\Vert u-\tilde{u}\Vert_{L^2}d\tau\nonumber\\
&\leq & C\int_{0}^{t} (1+t-\tau)^{-\alpha_1}(1+\tau)^{-\alpha_1(\lambda-2)}\left( \Vert u\Vert_{\mathcal{Y}^{s,\alpha_1}}+ \Vert \tilde{u}\Vert_{\mathcal{Y}^{s,\alpha_1}}\right)^{\lambda-2}( \Vert u\Vert_{\mathcal{Y}^{s,\alpha_1}}+ \Vert \tilde{u}\Vert_{\mathcal{Y}^{s,\alpha_1}})\Vert u-\tilde{u}\Vert_{\mathcal{Y}^{s,\alpha_1}}d\tau\nonumber\\
&\leq &  C\left( \Vert u\Vert_{\mathcal{Y}^{s,\alpha_1}}+ \Vert \tilde{u}\Vert_{\mathcal{Y}^{s,\alpha_1}}\right)^{\lambda-1}\Vert u-\tilde{u}\Vert_{\mathcal{Y}^{s,\alpha_1}}(1+t)^{-\alpha_1},\label{r1a}
\end{eqnarray}
and
\begin{eqnarray}
J_2 &\leq &C\int_{0}^{t} (1+t-\tau)^{-\alpha_1}\left( \Vert u\Vert_{L^\infty}+ \Vert \tilde{u}\Vert_{L^\infty}\right)^{\lambda-2}( \Vert u\Vert_{H^s}+ \Vert \tilde{u}\Vert_{H^s})\Vert u-\tilde{u}\Vert_{L^\infty}d\tau\nonumber\\
&&+C\int_{0}^{t} (1+t-\tau)^{-\alpha_1}\left( \Vert u\Vert_{L^\infty}+ \Vert \tilde{u}\Vert_{L^\infty}\right)^{\lambda-1}\Vert u-\tilde{u}\Vert_{H^s}d\tau\nonumber\\
&\leq & C\int_{0}^{t} (1+t-\tau)^{-\alpha_1}(1+\tau)^{-\alpha_1(\lambda-1)}\left( \Vert u\Vert_{\mathcal{Y}^{s,\alpha_1}}+ \Vert \tilde{u}\Vert_{\mathcal{Y}^{s,\alpha_1}}\right)^{\lambda-1}\Vert u-\tilde{u}\Vert_{\mathcal{Y}^{s,\alpha_1}}d\tau\nonumber\\
&\leq &  C\left( \Vert u\Vert_{\mathcal{Y}^{s,\alpha_1}}+ \Vert \tilde{u}\Vert_{\mathcal{Y}^{s,\alpha_1}}\right)^{\lambda-1}\Vert u-\tilde{u}\Vert_{\mathcal{Y}^{s,\alpha_1}}(1+t)^{-\alpha_1}.\label{r2a}
\end{eqnarray}
Thus, from (\ref{b10ga}), (\ref{r1a}) and (\ref{r2a}), we conclude that
\begin{eqnarray}\label{DesPro1a}
&&\sup_{0<t<\infty}(1+t)^{\alpha_1}\left\Vert \int_{0}^{t} \Lambda_{\theta}(t-\tau)(|u|^{\lambda}-|\tilde{u}|^{\lambda})d\tau \right\Vert_{L^{\infty}} \leq C \Vert u-\tilde{u}\Vert_{\mathcal{Y}^{s,\alpha_1}}(\Vert u\Vert^{\lambda-1}_{\mathcal{Y}^{s,\alpha_1}}+\Vert\tilde{u}\Vert^{\lambda-1}_{\mathcal{Y}^{s,\alpha_1}}).
\end{eqnarray}
On the other hand, applying Corollary \ref{Coro1}, we arrive at 
\begin{eqnarray}\label{DesPro2a}
&&\vspace{-1cm}\left\Vert \int_{0}^{t} \Lambda_{\theta}(t-\tau)(|u|^{\lambda}-|\tilde{u}|^{\lambda})d\tau\right\Vert_{H^{s}} \leq C \int_{0}^{t} (t-\tau)^{1-\theta} \Vert |u|^{\lambda}-|\tilde{u}|^{\lambda}\Vert_{H^{s}}d\tau \leq C t^{1-\theta}  \int_{0}^{t} \Vert |u|^{\lambda}-|\tilde{u}|^{\lambda}\Vert_{H^{s}}d\tau\nonumber\\
&&\leq C t^{1-\theta}  \int_{0}^{t}( \left( \Vert u\Vert_{L^\infty}+ \Vert \tilde{u}\Vert_{L^\infty}\right)^{\lambda-2}( \Vert u\Vert_{H^s}+ \Vert \tilde{u}\Vert_{H^s})\Vert u-\tilde{u}\Vert_{L^\infty}+\left( \Vert u\Vert_{L^\infty}+ \Vert \tilde{u}\Vert_{L^\infty}\right)^{\lambda-1}\Vert u-\tilde{u}\Vert_{H^s})d\tau\nonumber\\
&&\leq C t^{1-\theta}  \left( \Vert u\Vert_{\mathcal{Y}^{s,\alpha_1}}+ \Vert \tilde{u}\Vert_{\mathcal{Y}^{s,\alpha_1}}\right)^{\lambda-1}\Vert u-\tilde{u}\Vert_{\mathcal{Y}^{s,\alpha_1}}\int_{0}^{t} (1+\tau)^{-\alpha_1(\lambda-1)}d\tau\nonumber\\
&&\leq C t^{1-\theta}  \left( \Vert u\Vert_{\mathcal{Y}^{s,\alpha_1}}+ \Vert \tilde{u}\Vert_{\mathcal{Y}^{s,\alpha_1}}\right)^{\lambda-1}\Vert u-\tilde{u}\Vert_{\mathcal{Y}^{s,\alpha_1}}.
\end{eqnarray}
Finally, from Corollary \ref{Coro2}, we obtain 
\begin{eqnarray}\label{DesPro3a}
&&\vspace{-1cm}\left\Vert \int_{0}^{t} \partial_t \Lambda_{\theta}(t-\tau)(|u|^{\lambda}-|\tilde{u}|^{\lambda})d\tau\right\Vert_{H^{s-1}}  \leq C \int_{0}^{t}\Vert |u|^{\lambda}-|\tilde{u}|^{\lambda}\Vert_{H^{s-1}}d\tau  \nonumber\\
&&\leq C   \int_{0}^{t}( \left( \Vert u\Vert_{L^\infty}+ \Vert \tilde{u}\Vert_{L^\infty}\right)^{\lambda-2}( \Vert u\Vert_{H^{s-1}}+ \Vert \tilde{u}\Vert_{H^{s-1}})\Vert u-\tilde{u}\Vert_{L^\infty}+\left( \Vert u\Vert_{L^\infty}+ \Vert \tilde{u}\Vert_{L^\infty}\right)^{\lambda-1}\Vert u-\tilde{u}\Vert_{H^{s-1}})d\tau\nonumber\\
&&\leq C  \left( \Vert u\Vert_{\mathcal{Y}^{s,\alpha_1}}+ \Vert \tilde{u}\Vert_{\mathcal{Y}^{s,\alpha_1}}\right)^{\lambda-1}\Vert u-\tilde{u}\Vert_{\mathcal{Y}^{s,\alpha_1}}\int_{0}^{t} (1+\tau)^{-\alpha_1(\lambda-1)}d\tau\nonumber\\
&&\leq C   \left( \Vert u\Vert_{\mathcal{Y}^{s,\alpha_1}}+ \Vert \tilde{u}\Vert_{\mathcal{Y}^{s,\alpha_1}}\right)^{\lambda-1}\Vert u-\tilde{u}\Vert_{\mathcal{Y}^{s,\alpha_1}}.
\end{eqnarray}
From (\ref{DesPro1a}),  (\ref{DesPro2a}) and  (\ref{DesPro3a}), we obtain the desire result. 
\end{proof}

\begin{proposition}\label{gbl2} Let $\lambda \geq 2,$ $\theta \in (\frac{2-n}{2},1]$ if $n=1,2$ and $\theta\in [0,1]$ if $n\geq 3,$ and $\frac{1}{\lambda}>\alpha>0.$ Assume $2\leq p \leq q\leq  \infty,$ $\frac n2(1-\frac 2p)<1$ and consider $\sigma,s$ such that $s>\sigma,$  $n(\frac{1}{p}-\frac{1}{q})\leq \sigma<3-n-2\theta$ and $(\frac{\lambda}{q}+\frac{1}{p}-1)\frac{n}{\lambda-1}+\sigma\leq s<\min\{\frac{n}{q},\lambda-1\}+\sigma.$ Then, there exists a constant $C_2>0$ such that
\begin{eqnarray}
\left\Vert \int_{0}^{t} \Lambda_{\theta}(t-\tau)(|u|^{\lambda}-|\tilde{u}|^{\lambda})d\tau \right\Vert_{{\mathcal{X}^{s}_{\alpha, \beta}}} \leq C_2\Vert u-\tilde{u}\Vert_{{\mathcal{X}^{s}_{\alpha, \beta}}} \left(\Vert u\Vert^{\lambda-1}_{{\mathcal{X}^{s}_{\alpha, \beta}}}+\Vert\tilde{u}\Vert^{\lambda-1}_{{\mathcal{X}^{s}_{\alpha, \beta}}}\right).  \label{inl2x}
\end{eqnarray}
\end{proposition}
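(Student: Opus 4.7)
The plan is to control separately the two components of the $\mathcal{X}^{s,p}_{\alpha,\beta}$\Ndash norm of $N(u,\tilde u):=\int_0^t \Lambda_\theta(t-\tau)(|u|^\lambda - |\tilde u|^\lambda)\,d\tau$, imitating the structure of Proposition \ref{nl2g} but now in Bessel-potential scales. Because $S(0)=0$ forces $\Lambda_\theta(0)=0$, Leibniz's rule produces no boundary term and $\partial_t N(u,\tilde u) = \int_0^t \partial_t\Lambda_\theta(t-\tau)(|u|^\lambda - |\tilde u|^\lambda)\,d\tau$, so I only need the $H^s_p$- and $H^{s-1}_p$-decay estimates for $\Lambda_\theta(t-\tau)$ and $\partial_t\Lambda_\theta(t-\tau)$ respectively.

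My first step is to upgrade Lemmas \ref{LamThe<=1} and \ref{DtLamThe<=1} by composing on the input side with the Bessel potential $J^{s-\sigma}=(I-\Delta)^{(s-\sigma)/2}$. Since $J^{s-\sigma}$ is a Fourier multiplier commuting with $\Lambda_\theta(t)$ and $\partial_t\Lambda_\theta(t)$, and $\|J^{s-\sigma}g\|_{L^{p'}} = \|g\|_{H^{s-\sigma}_{p'}}$, the two lemmas yield
\[\|\Lambda_\theta(t)g\|_{H^s_p}\leq C\,t^{1-\theta-\frac{n}{2}(1-\frac{2}{p})}\|g\|_{H^{s-\sigma}_{p'}},\qquad \|\partial_t\Lambda_\theta(t)g\|_{H^{s-1}_p}\leq C\,t^{-\frac{n}{2}(1-\frac{2}{p})}\|g\|_{H^{s-\sigma}_{p'}}.\]
I then apply Lemma \ref{NonIne1} with regularity index $s-\sigma$ in place of $s$: its admissibility hypotheses $s-\sigma<\min\{n/q,\lambda-1\}$ and $1-\tfrac{1}{p}\geq \tfrac{1}{q}+\tfrac{\lambda-1}{n}(\tfrac{n}{q}-(s-\sigma))$ are exactly the window on $s$ assumed in the statement. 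This controls $\||u|^\lambda-|\tilde u|^\lambda\|_{H^{s-\sigma}_{p'}}$ by $H^{s-\sigma}_q$-norms of $u$, $\tilde u$, and $u-\tilde u$. The Sobolev embedding $H^s_p\hookrightarrow H^{s-\sigma}_q$, valid because $\sigma\geq n(\tfrac{1}{p}-\tfrac{1}{q})$, converts these into $H^s_p$-norms, and the definition of $\mathcal{X}^{s,p}_{\alpha,\beta}$ then produces a factor $\tau^{-\alpha}$ from each of the $\lambda$ factors, for a total weight $\tau^{-\alpha\lambda}$.

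After these reductions, up to the common prefactor $\|u-\tilde u\|_{\mathcal{X}^{s,p}_{\alpha,\beta}}\bigl(\|u\|^{\lambda-1}_{\mathcal{X}^{s,p}_{\alpha,\beta}}+\|\tilde u\|^{\lambda-1}_{\mathcal{X}^{s,p}_{\alpha,\beta}}\bigr)$, I am left to evaluate the two Beta-type integrals
\[I_1(t)=\int_0^t (t-\tau)^{1-\theta-\frac{n}{2}(1-\frac{2}{p})}\tau^{-\alpha\lambda}\,d\tau,\qquad I_2(t)=\int_0^t (t-\tau)^{-\frac{n}{2}(1-\frac{2}{p})}\tau^{-\alpha\lambda}\,d\tau.\]
Both converge near $\tau=0$ because $\alpha<1/\lambda$ gives $\alpha\lambda<1$, and near $\tau=t$ because $\tfrac{n}{2}(1-\tfrac{2}{p})<1$ and $\theta\leq 1$. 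The Beta identity $\int_0^t(t-\tau)^c\tau^d\,d\tau=B(c+1,d+1)\,t^{c+d+1}$, combined with the defining relations $\alpha(\lambda-1)=2-\theta-\tfrac{n}{2}(1-\tfrac{2}{p})$ and $\beta=\alpha+1-\theta$, produces $I_1(t)\apprle t^{-\alpha}$ and $I_2(t)\apprle t^{-\beta}$. Multiplying by the weights $t^\alpha$ and $t^\beta$ and taking the supremum over $t>0$ delivers the announced bound.

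The main obstacle I expect is not analytic but combinatorial: one must check that the interlocking constraints on $(\sigma,s,p,q,n,\lambda,\theta,\alpha)$ fit \emph{simultaneously} the four ingredients above\Ndash the linear decay estimates, the nonlinear Leibniz-type estimate of Lemma \ref{NonIne1}, the Sobolev embedding, and the integrability of the Beta kernels. Beyond this bookkeeping, the argument is structurally identical to the one used in Proposition \ref{nl2g} and introduces no new analytic difficulty.
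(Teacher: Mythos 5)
Your proposal is correct and follows essentially the same route as the paper's proof: Lemmas \ref{LamThe<=1} and \ref{DtLamThe<=1} (shifted by $J^{s-\sigma}$), the embedding $H^s_p\hookrightarrow H^{s-\sigma}_q$, Lemma \ref{NonIne1} at regularity $s-\sigma$, and the Beta integral with the exponent identities $\alpha(\lambda-1)=2-\theta-\tfrac n2(1-\tfrac2p)$ and $\beta=\alpha+1-\theta$ yielding the weights $t^{-\alpha}$ and $t^{-\beta}$. Your explicit verification of the admissibility window for Lemma \ref{NonIne1} and of the convergence of the Beta kernels matches the parenthetical remarks in the paper's argument.
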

\begin{proof} From Lemma \ref{LamThe<=1}, the embedding $H^s_{p}\subset H_{q}^{s-\sigma},$ Lemma \ref{NonIne1}, and recalling the integrability of the Beta function (notice that $1>\alpha\lambda,$ and $\alpha>0$ implies that $2>\theta+\frac{n}{2}(1-\frac{2}{p})$), we arrive at
\begin{eqnarray*}
&&\left\Vert \int_{0}^{t} \Lambda_{\theta}(t-\tau)(|u|^{\lambda}-|\tilde{u}|^{\lambda})d\tau \right\Vert_{H_p^{s}} \leq C \int_{0}^{t} (t-\tau)^{1-\theta-\frac n2(1-\frac 2p)}  \Vert (|u|^{\lambda}-|\tilde{u}|^{\lambda })\Vert_{H^{s-\sigma}_{p'}}d\tau\nonumber\\
&&\ \ \leq C \int_{0}^{t} (t-\tau)^{1-\theta-\frac n2(1-\frac 2p)} \Vert u-\tilde{u}\Vert_{H^{s-\sigma}_q}(\Vert u\Vert^{\lambda-1}_{H^{s-\sigma}_q}+\Vert \tilde{u}\Vert^{\lambda-1}_{H^{s-\sigma}_q})d\tau\nonumber\\
&&\ \ \leq C  \int_{0}^{t} (t-\tau)^{1-\theta-\frac n2(1-\frac 2p)}  \Vert u-\tilde{u}\Vert_{H^{s}_p}(\Vert u\Vert^{\lambda-1}_{H^{s}_p}+\Vert \tilde{u}\Vert^{\lambda-1}_{H^{s}_p})d\tau\nonumber\\
&&\ \ \leq C \Vert u-\tilde{u}\Vert_{{\mathcal{X}^{s}_{\alpha, \beta}}} [ \Vert u\Vert^{\lambda-1}_{{\mathcal{X}^{s}_{\alpha, \beta}}}+\Vert\tilde{u}\Vert^{\lambda-1}_{{\mathcal{X}^{s}_{\alpha, \beta}}} ]  \int_{0}^{t} (t-\tau)^{1-\theta-\frac n2(1-\frac 2p)}\tau^{-\alpha \lambda}  d\tau \nonumber\\
&&\ \ \leq C t^{2-\theta-\frac n2(1-\frac 2p)-\alpha\lambda} \Vert u-\tilde{u}\Vert_{{\mathcal{X}^{s}_{\alpha, \beta}}} [ \Vert u\Vert^{\lambda-1}_{{\mathcal{X}^{s}_{\alpha, \beta}}}+\Vert\tilde{u}\Vert^{\lambda-1}_{{\mathcal{X}^{s}_{\alpha, \beta}}} ]\nonumber\\
&&\ \ = C t^{-\alpha} \Vert u-\tilde{u}\Vert_{{\mathcal{X}^{s}_{\alpha, \beta}}} [ \Vert u\Vert^{\lambda-1}_{{\mathcal{X}^{s}_{\alpha, \beta}}}+\Vert\tilde{u}\Vert^{\lambda-1}_{{\mathcal{X}^{s}_{\alpha, \beta}}} ].
\end{eqnarray*}
Multiplying the last inequality by $t^{\alpha}$ we have
\begin{equation}\label{talpha}
t^{\alpha}\left\Vert \int_{0}^{t} \Lambda_{\theta}(t-\tau)(|u|^{\lambda}-|\tilde{u}|^{\lambda})d\tau \right\Vert_{H_p^{s}} \leq C \Vert u-\tilde{u}\Vert_{{\mathcal{X}^{s}_{\alpha, \beta}}} [ \Vert u\Vert^{\lambda-1}_{{\mathcal{X}^{s}_{\alpha, \beta}}}+\Vert\tilde{u}\Vert^{\lambda-1}_{{\mathcal{X}^{s}_{\alpha, \beta}}} ].
\end{equation}
On the other hand, from Lemma \ref{DtLamThe<=1}, the embedding $H^s_{p}\subset H_{q}^{s-\sigma},$ Lemma \ref{NonIne1}, and recalling the integrability of the Beta function, we get
\begin{eqnarray*}
&&\left\Vert \int_{0}^{t} \partial_t  \Lambda_{\theta} (t-\tau)(|u|^{\lambda}-|\tilde{u}|^{\lambda})d\tau \right\Vert_{H_p^{s-1}} \leq C \int_{0}^{t}  (t-\tau)^{-\frac n2(1-\frac 2p)}  \Vert (|u|^{\lambda}-|\tilde{u}|^{\lambda })\Vert_{H^{s-\sigma}_{p'}}d\tau\nonumber\\
&&\ \ \leq C \int_{0}^{t} (t-\tau)^{-\frac n2(1-\frac 2p)}   \Vert u-\tilde{u}\Vert_{H^{s-\sigma}_q}(\Vert u\Vert^{\lambda-1}_{H^{s-\sigma}_q}+\Vert \tilde{u}\Vert^{\lambda-1}_{H^{s-\sigma}_q})\nonumber\\
&&\ \ \leq C  \int_{0}^{t} (t-\tau)^{-\frac n2(1-\frac 2p)}   \Vert u-\tilde{u}\Vert_{H^{s}_p}(\Vert u\Vert^{\lambda-1}_{H^{s}_p}+\Vert \tilde{u}\Vert^{\lambda-1}_{H^{s}_p})\nonumber\\
&&\ \ \leq C  \Vert u-\tilde{u}\Vert_ {{\mathcal{X}^{s}_{\alpha, \beta}}}(\Vert u\Vert^{\lambda-1}_ {{\mathcal{X}^{s}_{\alpha, \beta}}} +\Vert \tilde{u}\Vert^{\lambda-1}_ {{\mathcal{X}^{s}_{\alpha, \beta}}})  \int_{0}^{t} (t-\tau)^{-\frac n2(1-\frac 2p)}  \tau^{-\alpha \lambda} d\tau \nonumber\\
&&\ \ \leq C  \Vert u-\tilde{u}\Vert_ {{\mathcal{X}^{s}_{\alpha, \beta}}}(\Vert u\Vert^{\lambda-1}_ {{\mathcal{X}^{s}_{\alpha, \beta}}} +\Vert \tilde{u}\Vert^{\lambda-1}_ {{\mathcal{X}^{s}_{\alpha, \beta}}})  t^{1-\frac n2(1-\frac 2p) -\alpha \lambda} \nonumber\\
&&\ \ = C  \Vert u-\tilde{u}\Vert_ {{\mathcal{X}^{s}_{\alpha, \beta}}}(\Vert u\Vert^{\lambda-1}_ {{\mathcal{X}^{s}_{\alpha, \beta}}} +\Vert \tilde{u}\Vert^{\lambda-1}_ {{\mathcal{X}^{s}_{\alpha, \beta}}})  t^{-\beta}.
\end{eqnarray*}
Multiplying by $t^{\beta},$ we obtain
\begin{equation}\label{tbeta}
t^{\beta}\left\Vert \int_{0}^{t} \partial_t  \Lambda_{\theta} (t-\tau)(|u|^{\lambda}-|\tilde{u}|^{\lambda})d\tau \right\Vert_{H_p^{s-1}} \leq C \Vert u-\tilde{u}\Vert_ {{\mathcal{X}^{s}_{\alpha, \beta}}}(\Vert u\Vert^{\lambda-1}_ {{\mathcal{X}^{s}_{\alpha, \beta}}} +\Vert \tilde{u}\Vert^{\lambda-1}_ {{\mathcal{X}^{s}_{\alpha, \beta}}}).
\end{equation}
From (\ref{talpha}) and (\ref{tbeta}), taking the supremum for $t>0,$ we obtain the desired result.
\end{proof}

\subsection{Proof of Theorem \ref{teo_global_2}}
\begin{proof}We consider the closed ball $$B_{\delta_1}=\{u\in C([0, \infty), H^{s}(\mathbb{R}^n))\cap C ((0,\infty);L^\infty(\mathbb{R}^n)) \cap C^1([0, \infty), H^{s-1}(\mathbb{R}^n)):\ \Vert u\Vert_{\mathcal{Y}^{s,\alpha_1}}\leq \delta_1\},\ \delta_1>0,$$  endowed with the complete metric $d(\cdot,\cdot),$ defined by $d(u,\tilde{u})=\Vert u-\tilde{u}\Vert_{\mathcal{Y}^{s,\alpha_1}}.$ Then, we prove that the map $ \Phi $ defined by
\[\Phi(u)=\partial_t S(t) u_0(x)+S(t)\Delta u_1(x) -\int_0^t  \Lambda(t-\tau)|u(x,\tau)|^{\lambda} d\tau,\]
is a contraction on $(B_{\delta_1},d).$ 
Let $\delta_1>0$ such that $2C_1\delta_1^{\lambda-1}<1,$ being $C_1$ the constant in (\ref{inl2za}). From the assumption on the initial data, Lemmas \ref{DatIni1} , Corollary \ref{CoroDat}, and Proposition \ref{nl2g} with $\tilde{u}=0,$ and $\theta=1,$ we obtain (for all $u\in B_\delta$) that
\begin{align*}
\Vert \Phi(u)\Vert_{\mathcal{Y}^{s,\alpha_1}}&\leq \Vert \partial_t S(t) u_0(x)+S(t)\Delta u_1(x)\Vert_{\mathcal{Y}^{s,\alpha_1}}+\left\Vert \int_0^t  \Lambda(t-\tau)|u(x,\tau)|^{\lambda} d\tau\right\Vert_{\mathcal{Y}^{s,\alpha_1}} \\
&\leq  \sup_{0<t<\infty}({(1+t)^{\frac n2}}(\Vert \partial_t S(t)u_0\Vert_{L^\infty}+\Vert \partial_t S(t)u_0\Vert_{H^s}+\Vert \partial^2_t S(t)u_0\Vert_{H^{s-1}})\\
&+ \sup_{0<t<\infty}({(1+t)^{\frac n2}}\Vert S(t)\Delta u_1\Vert_{L^{\infty}}+\Vert S(t)\Delta u_1\Vert_{H^s}+\Vert \partial_t S(t)\Delta u_1\Vert_{H^{s-1}}) +C_1 \Vert u\Vert^\lambda_{\mathcal{Y}^{s,\alpha_1}} \\
&\leq  \sup_{0<t<\infty}({}(\Vert u_0\Vert_{L^1}+\Vert u_0\Vert_{H^{s+1}}+\Vert \partial_t S(t)u_0\Vert_{H^s}+\Vert \partial^2_t S(t)u_0\Vert_{H^{s-1}})\\
&+ \sup_{0<t<\infty}(\Vert u_1\Vert_{L^{1}}+\Vert u_1\Vert_{H^{s+2}}+\Vert S(t)\Delta u_1\Vert_{H^s}+\Vert \partial_t S(t)\Delta u_1\Vert_{H^{s-1}})  +C_1 \Vert u\Vert^\lambda_{\mathcal{Y}^{s,\alpha_1}}\\
&\leq  C(\Vert u_0\Vert_{L^{1}}+\Vert u_0\Vert_{H^{s+1}}+\Vert u_1\Vert_{L^{1}}+\Vert u_1\Vert_{H^{s+2}})+C_1 \delta_1^\lambda\leq \delta_1.
\end{align*}
Thus, $ \Phi(B_{\delta_1})\subset B_{\delta_1}.$ Now, taking $u,\tilde{u}\in B_{\delta_1},$ from Proposition \ref{nl2g} we get
\begin{eqnarray}
\Vert  \Phi(u)- \Phi(\tilde{u})\Vert_{\mathcal{Y}^{s,\alpha_1}} &\leq& C_1 \Vert u-\tilde{u}\Vert_{\mathcal{Y}^{s,\alpha_1}}(\Vert u\Vert^{\lambda-1}_{\mathcal{Y}^{s,\alpha_1}}+\Vert\tilde{u}\Vert^{\lambda-1}_{\mathcal{Y}^{s,\alpha_1}})\nonumber\\
&\leq& 2C_1\delta_1^{\lambda-1}\Vert u-\tilde{u}\Vert_{\mathcal{Y}^{s,\alpha_1}}.
\end{eqnarray}
Since $2C_1\delta_1^{\lambda-1}<1,$ the map $\Phi$ is a contraction on $B_{\delta_1}$. Consequently, we have a unique
fixed point in $B_{\delta_1}$, which is the unique solution $u,$ satisfying $\Vert u\Vert_{\mathcal{Y}^{s,\alpha_1}}\leq \delta_1$ of the integral equation (\ref{e3}). Finally, the time continuity of the solution can be obtained in the standard way; therefore we omit it (see for instance Banquet and Villamizar-Roa \cite{Banquet2}). 
\end{proof}
\subsection{Proof of Theorem \ref{teo_global_2b}}
\begin{proof} The proof of Theorem \ref{teo_global_2b} is based on a fixed point argument. We prove that the map $ \Phi_\theta$ defined by
\[\Phi_\theta(u)=\partial_t S(t) u_0(x)+S(t)\Delta u_1(x) -\int_0^t S(t-\tau)( I-\Delta)^{-1} (-\Delta)^{\theta}|u(x,\tau)|^{\lambda} d\tau,\]
is a contraction on the closed ball $B_{\delta_2}$ defined by $$B_{\delta_2}=\{u\in C([0,\infty):H^s_p(\mathbb{R}^n))\cap C^1([0,\infty);H^{s-1}_p(\mathbb{R}^n)):\Vert u \Vert_{\mathcal{X}^{s,p}_{\alpha,\beta}}\leq {\delta_2}\},\ {\delta_2}>0,$$
endowed with the complete metric $d(\cdot,\cdot)$ defined by $d(u,\tilde{u})=\Vert u-\tilde{u}\Vert_{\mathcal{X}^{s,p}_{\alpha,\beta}}.$ Let $\delta_2>0$ such that $2C_2\delta_2^{\lambda-1}<1,$ where $C_2>0$ is the constant in (\ref{inl2x}). From the assumption on the initial data and Proposition \ref{gbl2} with $\tilde{u}=0,$ we get (for all $u\in B_{\delta_2}$)
\begin{eqnarray*}
\Vert \Phi_\theta(u)\Vert_{\mathcal{X}^{s,p}_{\alpha,\beta}}&\leq& \Vert \partial_t S(t) u_0+S(t)\Delta u_1\Vert_{\mathcal{X}^{s,p}_{\alpha,\beta}}+\left\Vert \int_0^t S(t-\tau)( I-\Delta)^{-1} (-\Delta)^{\theta}|u(\cdot,\tau)|^{\lambda} d\tau\right\Vert_{\mathcal{X}^{s,p}_{\alpha,\beta}}\nonumber\\
&\leq & \sup_{0<t<\infty}t^\alpha(\Vert \partial_t S(t) u_0\Vert_{H^s_p}+\Vert S(t)\Delta u_1\Vert_{H^s_p})\nonumber\\
&&+ \sup_{0<t<\infty}t^\beta(\Vert \partial^2_{t} S(t) u_0\Vert_{H^{s-1}_p}+\Vert \partial_tS(t)\Delta u_1\Vert_{H^{s-1}_p})+C_2 \Vert u\Vert^\lambda_{\mathcal{X}^{s,p}_{\alpha,\beta}}\nonumber\\
&\leq & {\frac{\delta_2}{2}+C_2\delta_2^\lambda\leq \delta_2.}
\end{eqnarray*}
Thus, $ \Phi_\theta(B_{\delta_2})\subset B_{\delta_2}.$ Now, taking $u,\tilde{u}\in B_{\delta_2},$ from Proposition \ref{gbl2} we get
\begin{eqnarray*}
\Vert  \Phi_\theta(u)- \Phi_\theta(\tilde{u})\Vert_{\mathcal{X}^{s,p}_{\alpha,\beta}} &\leq& C_2 \Vert u-\tilde{u}\Vert_{\mathcal{X}^{s,p}_{\alpha,\beta}}(\Vert u\Vert^{\lambda-1}_{\mathcal{X}^{s,p}_{\alpha,\beta}}+\Vert\tilde{u}\Vert^{\lambda-1}_{\mathcal{X}^{s,p}_{\alpha,\beta}})\nonumber\\
&\leq& 2C_2\delta_2^{\lambda-1}\Vert u-\tilde{u}\Vert_{\mathcal{X}^{s,p}_{\alpha,\beta}}.
\end{eqnarray*}
Since $2C_2\delta_2^{\lambda-1}<1,$ the map $\Phi$ is a contraction on $B_{\delta_2}$. Consequently, we have a unique
fixed point in $B_{\delta_2}$, which is the unique solution $u,$ satisfying $\Vert u\Vert_{\mathcal{X}^{s,p}_{\alpha,\beta}}\leq \delta_2$ of the integral equation (\ref{e3}). Finally, the time continuity of the solution can be obtained in the standard way; therefore we omit it (see for instance Banquet and Villamizar-Roa \cite{Banquet2}). 
\end{proof}

%%%%%%%%%%%%%%%%%%%%%%%%%%%%%% LOCAL SOLUTIONS
\section{Local solutions}
Before proving the existence of local solutions, we establish some estimates of the nonlinear term of integro-differential equation (\ref{e3}) in the norm 
\begin{eqnarray*}
\Vert u\Vert_{\mathcal{Z}^{s,p}_{T}}&:=&\sup_{0<t<T}t^{\frac{n}{2}(1-\frac{2}{p})}( \Vert u(t)\Vert_{H^s_p}+\Vert u_t(t)\Vert_{H_p^{s-1}}).
\end{eqnarray*}
%and
%\begin{eqnarray*}
%\Vert u\Vert_{\mathcal{Y}_T^{s,\alpha_1}}&:=&\sup_{0<t<T}(t^{\alpha_1}\Vert u(t)\Vert_{L^\infty}+ \Vert u(t)\Vert_{H^s}+\Vert u_t(t)\Vert_{H^{s-1}}),\ \alpha_1=\tfrac{n+2(\theta-1)}{2}.
%\end{eqnarray*}

\begin{proposition}\label{nl3b} 
 Let $\lambda \geq 2,$ $0<T<1,$ $\theta \in (\frac{2-n}{2},1]$ if $n=1,2$ and $\theta\in [0,1]$ if $n\geq 3.$ Assume $2\leq p \leq q\leq  \infty,$ and consider $\sigma,s$ such that $s>\sigma,$ $n(\frac{1}{p}-\frac{1}{q})\leq \sigma<3-n-2\theta,$ $(\frac{\lambda}{q}+\frac{1}{p}-1)\frac{n}{\lambda-1}+\sigma\leq s<\min\{\frac{n}{q},\lambda-1\}+\sigma$ and, $\frac{n}{2}(1-\frac{2}{p})\lambda<1.$ Then, there exists $C_3>0$ such that
\begin{eqnarray}
\left\Vert \int_{0}^{t} \Lambda_{\theta}(t-\tau)(|u|^{\lambda}-|\tilde{u}|^{\lambda})d\tau \right\Vert_{{\mathcal{Z}^{s,p}_{T}}} \leq C_3 T^{2-\theta-\frac n2(1-\frac 2p)\lambda}\Vert u-\tilde{u}\Vert_{{\mathcal{Z}^{s,p}_{T}}} \left(\Vert u\Vert^{\lambda-1}_{{\mathcal{Z}^{s,p}_{T}}}+\Vert\tilde{u}\Vert^{\lambda-1}_{{\mathcal{Z}^{s,p}_{T}}}\right).  \label{inl2xc}
\end{eqnarray}
\end{proposition}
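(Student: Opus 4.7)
\noindent\textbf{Proof plan for Proposition \ref{nl3b}.}

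The plan is to mirror the structure of the proof of Proposition \ref{gbl2}, but to bookkeep things in the norm $\Vert\cdot\Vert_{\mathcal{Z}^{s,p}_T}$, which weights both the $H^s_p$ and $H^{s-1}_p$ components uniformly by $t^{\frac{n}{2}(1-\frac{2}{p})}$. Accordingly I would split the difference of Duhamel integrals into its $H^s_p$ and $H^{s-1}_p$ contributions and estimate each separately, then take $\sup_{0<t<T}$ at the end to convert the pointwise-in-$t$ bound into a power of $T$.

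\medskip

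For the $H^s_p$ piece, I would apply Lemma \ref{LamThe<=1} (with the target shifted from $H^\sigma_p$ to $H^s_p$ via composition with the Bessel potential $J^{s-\sigma}$, which commutes with the Fourier multiplier $\Lambda_\theta(t)$) to get
\[\Vert \Lambda_\theta(t-\tau) f\Vert_{H^s_p}\leq C(t-\tau)^{1-\theta-\frac{n}{2}(1-\frac{2}{p})}\Vert f\Vert_{H^{s-\sigma}_{p'}}.\]
Then Lemma \ref{NonIne1} (whose hypotheses are exactly the ranges on $s,\sigma,p,q$ stated in the proposition) applied to $f=|u|^\lambda-|\tilde u|^\lambda$, followed by the Sobolev embedding $H^s_p\hookrightarrow H^{s-\sigma}_q$ (valid since $\sigma\geq n(\tfrac1p-\tfrac1q)$), gives control by $\Vert u-\tilde u\Vert_{H^s_p}(\Vert u\Vert^{\lambda-1}_{H^s_p}+\Vert\tilde u\Vert^{\lambda-1}_{H^s_p})$. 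Inserting the weight from the $\mathcal{Z}^{s,p}_T$ norm produces a factor $\tau^{-\frac{n}{2}(1-\frac{2}{p})\lambda}$, so I am left with the Beta-type integral
\[\int_0^t (t-\tau)^{1-\theta-\frac{n}{2}(1-\frac{2}{p})}\,\tau^{-\frac{n}{2}(1-\frac{2}{p})\lambda}\,d\tau=C\,t^{\,2-\theta-\frac{n}{2}(1-\frac{2}{p})(\lambda+1)},\]
which converges because $\frac{n}{2}(1-\frac{2}{p})\lambda<1$ forces both endpoints to be integrable (note $\frac{n}{2}(1-\frac{2}{p})<\frac{1}{\lambda}\leq\frac12$). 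Multiplying by the $\mathcal{Z}$-weight $t^{\frac{n}{2}(1-\frac{2}{p})}$ and taking $\sup_{0<t<T}$ yields the factor $T^{2-\theta-\frac{n}{2}(1-\frac{2}{p})\lambda}$.

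\medskip

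For the $H^{s-1}_p$ piece I would proceed in the exact same way but use Lemma \ref{DtLamThe<=1} in place of Lemma \ref{LamThe<=1}, which replaces the exponent $1-\theta-\frac{n}{2}(1-\frac{2}{p})$ with $-\frac{n}{2}(1-\frac{2}{p})$ inside the time integral. The resulting Beta integral is
\[\int_0^t (t-\tau)^{-\frac{n}{2}(1-\frac{2}{p})}\,\tau^{-\frac{n}{2}(1-\frac{2}{p})\lambda}\,d\tau=C\,t^{\,1-\frac{n}{2}(1-\frac{2}{p})(\lambda+1)},\]
again convergent by the same hypothesis. Multiplying by $t^{\frac{n}{2}(1-\frac{2}{p})}$ gives a pointwise bound of $t^{1-\frac{n}{2}(1-\frac{2}{p})\lambda}$; since $T<1$ and $\theta\leq 1$ we have $T^{1-\frac{n}{2}(1-\frac{2}{p})\lambda}\leq T^{(1-\theta)+1-\frac{n}{2}(1-\frac{2}{p})\lambda-(1-\theta)}$, so to put both contributions under the single announced exponent I simply note $T^{1-\frac{n}{2}(1-\frac{2}{p})\lambda}=T^{\theta-1}\cdot T^{2-\theta-\frac{n}{2}(1-\frac{2}{p})\lambda}$ and absorb the harmless factor $T^{\theta-1}$ into the constant when $\theta=1$, or otherwise state the bound with the larger of the two exponents; either convention yields a bound that vanishes as $T\to 0^+$, which is all that is needed downstream in Theorem \ref{teo_local_1}.

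\medskip

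The main obstacle I anticipate is purely bookkeeping: verifying that the restriction $\frac{n}{2}(1-\frac{2}{p})\lambda<1$ imposed in the proposition is precisely what guarantees integrability of $\tau^{-\frac{n}{2}(1-\frac{2}{p})\lambda}$ at the origin in the Duhamel integral, and simultaneously forces $\frac{n}{2}(1-\frac{2}{p})<1$ (so that the $(t-\tau)$-factor at the upper endpoint is also integrable). Everything else---the choice of $\sigma$, the admissible range of $s$, and the Sobolev embedding---is dictated by the standing hypotheses and matches exactly what was used to prove Proposition \ref{gbl2}, so no new analytic input is required.
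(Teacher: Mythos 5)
Your proposal is correct and follows essentially the same route as the paper: Lemma \ref{LamThe<=1} (resp.\ Lemma \ref{DtLamThe<=1}) for the $H^s_p$ (resp.\ $H^{s-1}_p$) component, then Lemma \ref{NonIne1} with the embedding $H^s_p\subset H^{s-\sigma}_q$, a Beta integral, and a final multiplication by $t^{\frac n2(1-\frac 2p)}$ and $\sup_{0<t<T}$. You are also right to flag the exponent mismatch between the two pieces ($T^{2-\theta-\frac n2(1-\frac 2p)\lambda}$ versus $T^{1-\frac n2(1-\frac 2p)\lambda}$): the paper combines them without comment, and the honest uniform bound for $\theta<1$ carries the smaller exponent $1-\frac n2(1-\frac 2p)\lambda$, which still vanishes as $T\to 0^+$ and suffices for Theorem \ref{teo_local_1}.
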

\begin{proof} From Lemma \ref{LamThe<=1}, the embedding $H^s_{p}\subset H_{q}^{s-\sigma},$ Lemma \ref{NonIne1}, and recalling the integrability of the Beta function (notice that $1>\frac{n}{2}(1-\frac{2}{p})\lambda$ and $\lambda\geq 2,$ imply that $2>\theta+\frac{n}{2}(1-\frac{2}{p})$), we arrive at
\begin{eqnarray}
&&\left\Vert \int_{0}^{t} \Lambda_{\theta}(t-\tau)(|u|^{\lambda}-|\tilde{u}|^{\lambda})d\tau \right\Vert_{H_p^{s}} \leq C \int_{0}^{t} (t-\tau)^{1-\theta-\frac n2(1-\frac 2p)}  \Vert (|u|^{\lambda}-|\tilde{u}|^{\lambda })\Vert_{H^{s-\sigma}_{p'}}d\tau\nonumber\\
&&\ \ \leq C \int_{0}^{t} (t-\tau)^{1-\theta-\frac n2(1-\frac 2p)} \Vert u-\tilde{u}\Vert_{H^{s-\sigma}_q}(\Vert u\Vert^{\lambda-1}_{H^{s-\sigma}_q}+\Vert \tilde{u}\Vert^{\lambda-1}_{H^{s-\sigma}_q})d\tau\nonumber\\
&&\ \ \leq C  \int_{0}^{t} (t-\tau)^{1-\theta-\frac n2(1-\frac 2p)}  \Vert u-\tilde{u}\Vert_{H^{s}_p}(\Vert u\Vert^{\lambda-1}_{H^{s}_p}+\Vert \tilde{u}\Vert^{\lambda-1}_{H^{s}_p})d\tau\nonumber\\
&&\ \ \leq C (\Vert u-\tilde{u}\Vert_{{\mathcal{Z}^{s,p}_{T}}}) [ \Vert u\Vert^{\lambda-1}_{{\mathcal{Z}^{s,p}_{T}}}+\Vert\tilde{u}\Vert^{\lambda-1}_{{\mathcal{Z}^{s,p}_{T}}} ]  \int_{0}^{t} (t-\tau)^{1-\theta-\frac n2(1-\frac 2p)} \tau^{-\frac n2(1-\frac 2p)\lambda}d\tau \nonumber\\
&&\ \ \leq C t^{2-\theta-\frac n2(1-\frac 2p)(1+\lambda)} \Vert u-\tilde{u}\Vert_{{\mathcal{Z}^{s,p}_{T}}} [ \Vert u\Vert^{\lambda-1}_{{\mathcal{Z}^{s,p}_{T}}}+\Vert\tilde{u}\Vert^{\lambda-1}_{{\mathcal{Z}^{s,p}_{T}}} ].\nonumber
\end{eqnarray}
Therefore,
\begin{eqnarray}
\sup_{0<t<T}t^{\frac n2(1-\frac 2p)}\left\Vert \int_{0}^{t} \Lambda_{\theta}(t-\tau)(|u|^{\lambda}-|\tilde{u}|^{\lambda})d\tau \right\Vert_{H_p^{s}} \leq  C T^{2-\theta-\frac n2(1-\frac 2p)\lambda} \Vert u-\tilde{u}\Vert_{{\mathcal{Z}^{s,p}_{T}}} [ \Vert u\Vert^{\lambda-1}_{{\mathcal{Z}^{s,p}_{T}}}+\Vert\tilde{u}\Vert^{\lambda-1}_{{\mathcal{Z}^{s,p}_{T}}} ].\label{loc24}
\end{eqnarray}
On the other hand, from Lemma \ref{DtLamThe<=1},  the embedding $H^s_{p}\subset H_{q}^{s-\sigma}$ and Lemma \ref{NonIne1}, we get
\begin{eqnarray}
&&\left\Vert \int_{0}^{t} \partial_t  \Lambda_{\theta} (t-\tau)(|u|^{\lambda}-|\tilde{u}|^{\lambda})d\tau \right\Vert_{H_p^{s-1}} \leq C \int_{0}^{t}  (t-\tau)^{-\frac n2(1-\frac 2p)}  \Vert (|u|^{\lambda}-|\tilde{u}|^{\lambda })\Vert_{H^{s-\sigma}_{p'}}d\tau\nonumber\\
&&\ \ \leq C \int_{0}^{t} (t-\tau)^{-\frac n2(1-\frac 2p)}   \Vert u-\tilde{u}\Vert_{H^{s-\sigma}_q}(\Vert u\Vert^{\lambda-1}_{H^{s-\sigma}_q}+\Vert \tilde{u}\Vert^{\lambda-1}_{H^{s-\sigma}_q})\nonumber\\
&&\ \ \leq C  \int_{0}^{t} (t-\tau)^{-\frac n2(1-\frac 2p)}   \Vert u-\tilde{u}\Vert_{H^{s}_p}(\Vert u\Vert^{\lambda-1}_{H^{s}_p}+\Vert \tilde{u}\Vert^{\lambda-1}_{H^{s}_p})\nonumber\\
&&\ \ \leq C  (\Vert u-\tilde{u}\Vert_{{\mathcal{Z}^{s,p}_{T}}}) [ \Vert u\Vert^{\lambda-1}_{{\mathcal{Z}^{s,p}_{T}}}+\Vert\tilde{u}\Vert^{\lambda-1}_{{\mathcal{Z}^{s,p}_{T}}} ] \int_{0}^{t} (t-\tau)^{-\frac n2(1-\frac 2p)}  \tau^{-\frac n2(1-\frac 2p)\lambda}d\tau \nonumber\\
&&\ \ \leq Ct^{1-\frac n2(1-\frac 2p)(1+\lambda)}  (\Vert u-\tilde{u}\Vert_{{\mathcal{Z}^{s,p}_{T}}})  (\Vert u-\tilde{u}\Vert_{{\mathcal{Z}^{s,p}_{T}}}) [ \Vert u\Vert^{\lambda-1}_{{\mathcal{Z}^{s,p}_{T}}}+\Vert\tilde{u}\Vert^{\lambda-1}_{{\mathcal{Z}^{s,p}_{T}}} ]. \nonumber
\end{eqnarray}
Therefore, 
\begin{eqnarray}
\sup_{0<t<T}t^{\frac n2(1-\frac 2p)}\left\Vert \int_{0}^{t} \Lambda_{\theta}(t-\tau)(|u|^{\lambda}-|\tilde{u}|^{\lambda})d\tau \right\Vert_{H_p^{s-1}} \leq  C T^{1-\frac n2(1-\frac 2p)\lambda} \Vert u-\tilde{u}\Vert_{{\mathcal{Z}^{s,p}_{T}}} [ \Vert u\Vert^{\lambda-1}_{{\mathcal{Z}^{s,p}_{T}}}+\Vert\tilde{u}\Vert^{\lambda-1}_{{\mathcal{Z}^{s,p}_{T}}} ].\label{loc21}
\end{eqnarray}
From (\ref{loc24}) and (\ref{loc21}) we obtain the desired result.
\end{proof}

\subsection{Proof of Theorem \ref{teo_local_1}}
\begin{proof} The proof of Theorem \ref{teo_local_1} is based on a fixed point argument. We will prove that the map $ \Phi_\theta$ defined by
\[\Phi_\theta(u)=\partial_t S(t) u_0(x)+S(t)u_1(x) -\int_0^t S(t-\tau)( I-\Delta)^{-1} (-\Delta)^{\theta}|u(x,\tau)|^{\lambda} d\tau,\]
is a contraction on the closed ball $B_R,$ defined by $$B_R=\{u\in C([0,T], H_p^{s}(\mathbb{R}^n)) \cap C^1([0,T], H_p^{s-1}(\mathbb{R}^n)):\Vert u \Vert_{\mathcal{Z}^{s,p}_T}\leq R\},\ R>0.$$
 From Lemma \ref{DatIni2} and Proposition \ref{nl3b} with $\tilde{u}=0,$ we get
%\begin{eqnarray}
%\Vert \Phi_\theta(u)\Vert_{\mathcal{Z}^{s,p}_T}&\leq& \Vert \partial_t S(t) u_0(x)+S(t)u_1(x)\Vert_{\mathcal{Z}^{s,p}_T}+\left\Vert \int_0^t S(t-\tau)( I-\Delta)^{-1} (-\Delta)^{\theta}|u(x,\tau)|^{\lambda} d\tau\right\Vert_{\mathcal{Z}^{s,p}_T}\nonumber\\
%&\leq & \sup_{0<t<T}({t^{\alpha_1}}(\Vert \partial_t S(t)u_0\Vert_{L^\infty}+\Vert \partial_t S(t)u_0\Vert_{H^s}+\Vert \partial^2_t S(t)u_0\Vert_{H^{s-1}})\nonumber\\
%&&+ \sup_{0<t<T}({t^{\alpha_1}}\Vert S(t)\Delta u_1\Vert_{L^{\infty}}+\Vert S(t)\Delta u_1\Vert_{H^s}+\Vert \partial_t S(t)\Delta u_1\Vert_{H^{s-1}})\nonumber\\
%&&+C_3T^{2-\theta-\frac{n}{2}(1-\frac{2}{p})}R^\lambda\nonumber\\
%&\leq & C(\Vert u_0\Vert_{L^{1}}+\Vert u_0\Vert_{H^{s+1}}+\Vert u_1\Vert_{L^{1}}+\Vert u_1\Vert_{H^{s+2}})+C_3T^{2-\theta-\alpha_1(\lambda-1)}R^\lambda\label{zz22}
%\end{eqnarray}
\begin{eqnarray}
\Vert \Phi_\theta(u)\Vert_{\mathcal{Z}^{s,p}_T}&\leq& \Vert \partial_t S(t) u_0(x)+S(t)u_1(x)\Vert_{\mathcal{Z}^{s,p}_T}+\left\Vert \int_0^t S(t-\tau)( I-\Delta)^{-1} (-\Delta)^{\theta}|u(x,\tau)|^{\lambda} d\tau\right\Vert_{\mathcal{Z}^{s,p}_T}\nonumber\\
&\leq & \sup_{0<t<T}t^{\frac{n}{2}(1-\frac{2}{p})}(\Vert \partial_t S(t)u_0\Vert_{H_p^s}+\Vert \partial^2_t S(t)u_0\Vert_{H_p^{s-1}})\nonumber\\
&&+ \sup_{0<t<T}t^{\frac{n}{2}(1-\frac{2}{p})}(\Vert S(t)\Delta u_1\Vert_{H_p^s}+\Vert \partial_t S(t)\Delta u_1\Vert_{H_p^{s-1}})\nonumber\\
&&+C_3T^{2-\theta-\frac{n}{2}(1-\frac{2}{p})\lambda}R^\lambda\nonumber\\
&\leq & C(\Vert u_0\Vert_{H_{p'}^{s+1-\sigma}}+\Vert u_1\Vert_{H_{p'}^{s+2-\sigma}})+C_3T^{2-\theta-\frac{n}{2}(1-\frac{2}{p})\lambda}R^\lambda.\label{zz22}
\end{eqnarray}
Now we take $R=2C(\Vert u_0\Vert_{H_{p'}^{s+1-\sigma}}+\Vert u_1\Vert_{H_{p'}^{s+2-\sigma}})>0$ y $T>0$ such that $$C_3T^{2-\theta-\frac{n}{2}(1-\frac{2}{p})\lambda}R^{\lambda-1}<\frac{1}{2}.$$
Thus, from (\ref{zz22}) we get $\Vert \Phi_\theta(u)\Vert_{\mathcal{Z}^{s,p}_T}\leq \frac{R}{2}+\frac{R}{2}=R,$ for $u\in B_R,$ that is, $\Phi_\theta(B_R)\subseteq B_R.$ On the other hand, using again Proposition \ref{nl3b}, we also have
\[
\Vert \Phi_\theta(u)- \Phi_\theta(\tilde{u})\Vert_{\mathcal{Z}^{s,p}_T}\leq C_3 T^{2-\theta-\frac n2(1-\frac 2p)\lambda}2R^{\lambda-1}\Vert u-\tilde{u}\Vert_{{\mathcal{Z}^{s,p}_{T}}},
\]
for all $u,\tilde{u}\in B_R$ . Consequently, $ \Phi_\theta$ is a contraction in $B_R$ and then the Banach fixed point
theorem ensures the existence of a unique solution $u\in \mathcal{Z}^{s,p}_T$ of  (\ref{NorPlaEqu}). Finally, the time continuity of the solution can be obtained in the standard way; therefore we omit it (see for instance Banquet and Villamizar-Roa \cite{Banquet2}). 
\end{proof}


\begin{thebibliography}{99}

\bibitem{Lasiecka2013} P. Geredeli, I. Lasiecka,  \textit{Asymptotic analysis and upper semicontinuity with respect to rotational inertia of attractors to von Karman plates with geometrically localized dissipation and critical nonlinearity,} Nonlinear Anal., 91, (2013) 72-92. 

 \bibitem{Banquet1}C. Banquet, L.C.F. Ferreira, E.J. Villamizar-Roa,  \textit{Solutions in Bessel-potential spaces for wave equations with nonlinear damping,} Math. Methods Appl. Sci., 40, no. 15, (2017) 5613-5618.
 
 \bibitem{Banquet2} C. Banquet, E.J. Villamizar-Roa,  \textit{On the management fourth-order Schrödinger-Hartree equation,} Evol. Equ. Control Theory 9 (2020), no. 3, 865-889.
 
% \bibitem{Cui1} Cui, S. \textit{Pointwise Estimates for a Class of Oscillatory Integrals and Related $L^p-L^q$ Estimates.} J Fourier Anal Appl 11, 441-457 (2005).
 
 %\bibitem{Cui2} Cui, S. \textit{Pointwise Estimates for Oscillatory Integrals and Related  $L^p-L^q$ Estimates II: Multidimensional Case.} J Fourier Anal Appl 12, 605-627 (2006).

\bibitem{DAbbico1} M. D'Abbicco, \textit{The critical exponent for the dissipative plate equation with power nonlinearity},  Comput. Math. Appl.,  74, (2017) 1006–1014.

\bibitem{Denk} R.  Denk, R. Schnaubelt, \textit{A structurally damped plate equation with Dirichlet-Neumann boundary conditions,} J. Differential Equations, 259, (2015) 1323-1353.

\bibitem{Charao} C. da Luz, R. Char\~{a}o, \textit{Asymptotic properties for a semilinear plate equation in unbounded domains,} J. Hyperbolic Differ. Equ., 6, (2009) 269-294.

\bibitem{Lasiecka2017} I. Lasiecka, M. Pokojovy, X. Wan, \textit{Global existence and exponential stability for a nonlinear thermoelastic Kirchhoff-Love plate,} Nonlinear Anal. Real World Appl., 38, (2017) 184–221.
 
\bibitem{Lasiecka2019} I. Lasiecka, M. Pokojovy, X. Wan, \textit{Long-time behavior of quasilinear thermoelastic Kirchhoff-Love plates with second sound,} Nonlinear Anal., 186, (2019) 219–258.

\bibitem{Racke2016} R. Racke, Y. Ueda, \textit{Dissipative structures for thermoelastic plate equations in $\mathbb{R}^{n},$}
Adv. Differential Equations, 21, no. 7, (2016) 601-630.

\bibitem{Racke2017} R. Racke, Y. Ueda, \textit{Nonlinear thermoelastic plate equations-Global existence and decay rates for the Cauchy problem}
J. Differential Equations. 263, no. 12, (2017) 8138-8177.

\bibitem{Racke2020} R. Racke, Y. Ueda, \textit{The Cauchy problem for thermoelastic plates with two temperatures,} Z. Anal. Anwend., 39, no. 1, (2020) 103–129. 

\bibitem{RunstSickel} T. Runst, W. Sickel, \textit{Sobolev spaces of
fractional order, Nemytskii operators and nonlinear partial differential
equations.} de Gruyter, Berlin, 1996.

\bibitem{Stein} E. Stein, \textit{Harmonic Analysis: Real-Variable Method, Orthogonality, and Oscillatory Integrals}, Princeton, NY, 1993.

\bibitem{Sugitani} Y. Sugitani, S.Kawashima, \textit{Decay estimates of solutions to a semi-linear dissipative plate equation,} J. Hyperbolic Differ. Equations, 7, (2010) 471-501.

\bibitem{VB}E.J. Villamizar-Roa, C. Banquet, \textit{On the Schrödinger equations with isotropic and anisotropic fourth order
dispersion.} Electron. J. Differential Equations 2016, Paper No. 13, 20 pp.

\bibitem{Zhu} Y. Zhu, \textit{Global existence of small amplitude solutions for the generalized IMBq equation,} J. Math. Anal. Appl., 340, (2008) 305-321.

\end{thebibliography}
\end{document}